\newcommand{\ra}{\rightarrow}
\newcommand{\into}{\hookrightarrow}
\newcommand{\gothgl}{{\mathfrak{gl}}}
\newlength{\ownl}
\newcommand{\ndiv}{{\mbox{$\not| $}}}
\newcommand{\Art}{{\operatorname{Art}\,}}
\newcommand{\Fil}{{\operatorname{Fil}\,}}
\newcommand{\Frob}{{\operatorname{Frob}}}
\newcommand{\Hom}{{\operatorname{Hom}\,}}
\newcommand{\Ind}{{\operatorname{Ind}\,}}
\newcommand{\Res}{{\operatorname{Res}}}
\newcommand{\WD}{{\operatorname{WD}}}
\newcommand{\Spec}{{\operatorname{Spec}\,}}
\newcommand{\gr}{{\operatorname{gr}\,}}
\newcommand{\rec}{{\operatorname{rec}}}
\newcommand{\diag}{{\operatorname{diag}}}
\newcommand{\Wdiag}{W^{\diag}}
\newcommand{\Wcris}{W^{\cris}}
\newcommand{\Wexplicit}{W^{\operatorname{explicit}}}
\newcommand{\cris}{{\operatorname{cris}}}
\newcommand{\ab}{{\operatorname{ab}}}
\newcommand{\nr}{{\operatorname{nr}}}
\newcommand{\semis}{{\operatorname{ss}}}
\newcommand{\univ}{{\operatorname{univ}}}
\newcommand{\A}{{\mathbb{A}}}
\newcommand{\C}{{\mathbb{C}}}
\newcommand{\F}{{\mathbb{F}}}
\newcommand{\Q}{{\mathbb{Q}}}
\newcommand{\R}{{\mathbb{R}}}
\newcommand{\Z}{{\mathbb{Z}}}
\newcommand{\CO}{{\mathcal{O}}}
\newcommand{\gm}{{\mathfrak{m}}}
\newcommand{\barK}{\overline{{K}}}
\newcommand{\barFF}{\overline{{\F}}}
\newcommand{\barQQ}{\overline{{\Q}}}
\newcommand{\tI}{\widetilde{{I}}}
\newcommand{\tS}{\widetilde{{S}}}
\newcommand{\tv}{{\widetilde{{v}}}}
\newcommand{\epsilonbar    }{\overline{\epsilon}}
 \newcommand{\barrho   }{{\overline{\rho}}}
\newcommand{\Qbar}{{\overline{\Q}}}
\def\RCS$#1: #2 ${\expandafter\def\csname RCS#1\endcsname{#2}}
\newcommand{\To}{\longrightarrow}
\newcommand{\m}{\mathfrak{m}}
\newcommand{\isoto}{\stackrel{\sim}{\To}} 
\newcommand{\bigO}{\mathcal{O}} 
\newcommand{\bb}{\mathbb} 
\newcommand{\mc}{\mathcal}
\newcommand{\mf}{\mathfrak}
\newcommand{\cA}{\mathcal{A}}
\newcommand{\cG}{\mathcal{G}}
\newcommand{\cO}{\mathcal{O}}
\newcommand{\rhobar}{\overline{\rho}} 
\newcommand{\rbar}{\bar{r}}
\newcommand{\mubar}{\overline{\mu}}
\newcommand{\GL}{\operatorname{GL}}
\newcommand{\HT}{\operatorname{HT}}
 \newcommand{\Qp}{\Q_p}
\newcommand{\Ql}{{\Q_l}} 
\newcommand{\GQl}{{G_\Ql}}
\newcommand{\Qpbar}{\overline{\Q}_p}
\newcommand{\Qlbar}{{\overline{\Q}_{l}}}
\newcommand{\Fl}{{\F_l}}
\newcommand{\Flbar}{{\overline{\F}_l}}
\def\numequation{\addtocounter{subsubsection}{1}\begin{equation}}
\def\Fss{\mathrm{F-ss}}
\def\univ{\mathrm{univ}}
 \newtheorem{ithm}{Theorem}
\newtheorem{thm}{Theorem}[subsection]
\newtheorem{cor}[thm]{Corollary}
 \newtheorem{lemma}[thm]{Lemma}
\newtheorem{lem}[thm]{Lemma} 
 \theoremstyle{definition}
 \theoremstyle{definition}
\newtheorem{defn}[thm]{Definition} \theoremstyle{remark}
\newtheorem{rem}[thm]{Remark} 
\newtheorem{remark}[thm]{Remark} 
\numberwithin{equation}{subsection}
\theoremstyle{definition}
\begin{document}
\title{Serre weights for $U(n)$.}

\author{Thomas Barnet-Lamb}\email{tbl@brandeis.edu}\address{Department of Mathematics, Brandeis University}
\author{Toby Gee} \email{toby.gee@imperial.ac.uk} \address{Department of
  Mathematics, Imperial College London}\author{David Geraghty}
\email{david.geraghty@bc.edu}\address{Department of Mathematics, 
  Boston College}  \thanks{The second author was partially supported
  by NSF grant DMS-0841491, a Marie Curie Career Integration Grant, and by an
  ERC Starting Grant, and the third author was partially supported
  by NSF grant DMS-1440703.}  \subjclass[2000]{11F33.}

\begin{abstract}We study the weight part of (a generalisation of)
  Serre's conjecture for mod $l$ Galois representations associated to
  automorphic representations on unitary groups of rank $n$ for odd
  primes $l$. Given a modular Galois representation, we use automorphy
  lifting theorems to prove that it is modular in many other
  weights. We make no assumptions on the ramification or inertial
  degrees of $l$. We give an explicit strengthened result when $n=3$
  and $l$ splits completely in the underlying CM field.
\end{abstract}
\maketitle
\tableofcontents

\section{Introduction}\label{sec:intro}

In recent years there has been considerable progress in formulating
generalisations of Serre's conjecture, and in particular of the weight
part of Serre's conjecture, for higher-dimensional groups;
cf. \cite{MR1896473}, \cite{herzigthesis}, \cite{gee061},
\cite{GHS}. There has been rather less progress in proving cases of
these conjectures; indeed, the only results that we are aware of are
the essentially complete treatment of the ordinary case for definite
unitary groups in \cite{gg}, and the results of \cite{egh} for definite
unitary groups of rank $3$. 

In the present paper, we use the automorphy lifting theorems developed
in \cite{blgg}, \cite{blggord} and \cite{BLGGT} to prove that a
modular Galois representation, coming from an automorphic form on $U(n)$, is necessarily modular in a number of
additional weights predicted by the conjectures of \cite{herzigthesis}
and \cite{GHS}. Rather complete results are available in the case
$n=2$, which are worked out in detail in the papers
\cite{BLGGU2,GLS12,GLS13}, so we concentrate in this paper on the case that
$n>2$. The additional complications are twofold. Firstly, we no longer
know that any modular Galois representation admits a potentially
diagonalizable lift (in the case $n=2$, this is proved in
\cite{BLGGU2} as a consequence of the results of \cite{kis04} and
\cite{MR2280776}). Secondly, the relationship between being modular of
some weight and having an automorphic lift of some weight is
substantially more complicated for $n>2$ than it is for $n=2$; in
particular, it is no longer the case that given an irreducible mod $l$
representation  $F$ of $\GL_n(\Fl)$, there is necessarily an
irreducible characteristic zero algebraic representation $W$ of
$\GL_n$ whose reduction modulo $l$ is $F$. Instead, one finds
that $F$ is the socle of the reduction modulo $l$ of some $W$, and this
gives strictly weaker information.

As a result of these two difficulties, our main theorems have two
restrictions. Let $F$ be a CM field, and let
$\rbar:G_F\to\GL_n(\Flbar)$ be our given modular Galois
representation. Firstly, we must assume that $\rbar$ has a potentially
diagonalizable automorphic lift. This assumption is perhaps not as
serious as it initially sounds, as it is conjecturally always satisfied, and in
particular is known to hold provided that $l$ is
unramified in $F$ and $\rbar$ has an automorphic lift of sufficiently
small weight. Secondly, rather than prove that $\rbar$ is modular of
some particular weight, we typically only provide a list of weights,
and guarantee that $\rbar$ is modular of some weight in this
list. In fact, it is often the case that only one weight on this list
is predicted by the conjectures of \cite{herzigthesis} and \cite{GHS},
and it should presumably be possible to prove modularity in this
weight in many cases using integral $p$-adic Hodge theory. We carry
out such an analysis in detail in the case $n=3$, defining a list of
conjectural weights $\Wexplicit(\rbar)$, and obtaining the following
result (Theorem \ref{thm: explicit result for GL_3 in l split
  completely case}).
\begin{ithm}\label{thm: A}Let $F$ be an imaginary CM field with maximal totally real subfield
  $F^+$, and suppose that $F/F^+$ is unramified at all finite places,
  and that $l$ splits completely in $F$. Suppose that $l>2$, and that
  $\rbar:G_F\to\GL_3(\Flbar)$ is an irreducible representation with
  split ramification. Assume that there is a RACSDC automorphic representation $\Pi$ of
  $\GL_3(\A_F)$ of weight $\mu\in(\Z^3_+)_0^{\Hom(F,\C)}$ and level
  prime to $l$ such that
  \begin{itemize}
  \item $\rbar\cong\rbar_{l,\imath}(\Pi)$ (so in particular, $\rbar^c\cong\rbar^\vee\epsilonbar_l^{-2}$).
  \item For each $\tau\in\Hom(F,\C)$, $\mu_{\tau,1}-\mu_{\tau,3}\le
    l-3$.
     \item $\rbar(G_{F(\zeta_l)})$ is adequate.
  \end{itemize}
 Let $a\in(\Z^3_+)_0^{\coprod_{w|l}\Hom(k_w,\Flbar)}$ be a generic
  Serre weight. Assume that $a\in \Wexplicit(\rbar)$.
Then $\rbar$ is modular of weight $a$.
  
\end{ithm}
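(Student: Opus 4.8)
The plan is to deduce the theorem from the automorphy lifting theorems of \cite{blgg,blggord,BLGGT}, combined with the explicit integral $p$-adic Hodge theory packaged into the set $\Wexplicit(\rbar)$. As a preliminary observation, the hypotheses force $\rbar$ to come equipped with a potentially diagonalizable automorphic lift: since $l$ splits completely in $F$ we have $F_w=\Q_l$ for every $w\mid l$, and since $\mu_{\tau,1}-\mu_{\tau,3}\le l-3$ for all $\tau$ the representation $\rbar_{l,\imath}(\Pi)$ is crystalline of Fontaine--Laffaille type at each $w\mid l$, hence potentially diagonalizable there, while the polarization $\rbar^c\cong\rbar^\vee\epsilonbar_l^{-2}$ is in force. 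Transferring $\Pi$ to a definite unitary group in $3$ variables (as is possible under our hypotheses on $F/F^+$), this exhibits $\rbar$ as modular of the algebraic weight attached to $\mu$, with hyperspecial level at $l$.

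Next I would unwind the condition $a\in\Wexplicit(\rbar)$, which is exactly the explicit analysis advertised in the introduction. By the construction of $\Wexplicit(\rbar)$, for each $w\mid l$ there is an explicit crystalline lift $\rho_w:G_{F_w}\to\GL_3(\Zbar_l)$ of $\rbar|_{G_{F_w}}$ whose Hodge--Tate weights are $\{a_{w,1}+2,a_{w,2}+1,a_{w,3}\}$; since these weights lie in an interval of length $\le l-2$ (by genericity of $a$) such a lift is Fontaine--Laffaille, hence potentially diagonalizable, and the associated rank-$3$ Fontaine--Laffaille module is arranged so that its reduction recovers $\rbar|_{G_{F_w}}$ compatibly with the Serre weight $a_w$. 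I would then globalize: using the lifting results of \cite{BLGGT} (existence of polarized global lifts with prescribed potentially diagonalizable local deformation conditions at $l$ and minimal ramification away from $l$ --- here adequacy of $\rbar(G_{F(\zeta_l)})$ kills the relevant dual Selmer group, and the local deformation rings at the $w\mid l$ are formally smooth of the expected dimension in the Fontaine--Laffaille range) I obtain a continuous lift $r:G_F\to\GL_3(\Zbar_l)$ of $\rbar$ with $r^c\cong r^\vee\epsilon_l^{-2}$, with $r|_{G_{F_w}}\cong\rho_w$ for all $w\mid l$, and unramified outside the split ramification set of $\rbar$.

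Now apply the automorphy lifting theorem of \cite{BLGGT} in the potentially diagonalizable case: $\rbar$ has a potentially diagonalizable automorphic lift (namely $\rbar_{l,\imath}(\Pi)$), $r$ is a potentially diagonalizable lift of $\rbar$, and $\rbar(G_{F(\zeta_l)})$ is adequate, so $r$ is automorphic --- there is a RACSDC representation $\Pi'$ of $\GL_3(\A_F)$ with $\rbar_{l,\imath}(\Pi')\cong\rbar$ and $r_{l,\imath}(\Pi')\cong r$. Since $r$ is crystalline at each $w\mid l$ of the Hodge type determined by $a$, the representation $\Pi'$ is unramified at the places above $l$ and of the corresponding infinitesimal character, so (back on the definite unitary group) $\Pi'$ shows that $\rbar$ is modular of the algebraic weight $W_\lambda$ for which $F_a$ is a Jordan--Hölder constituent of $\overline{W_\lambda}$. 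The final step is to upgrade this to modularity of the Serre weight $a$ itself: one reduces the characteristic-zero form modulo $l$ and runs a Jordan--Hölder/multiplicity argument in the space of mod $l$ automorphic forms on $U(3)$, using genericity of $a$ together with the explicit shape of the Fontaine--Laffaille modules of the $\rho_w$ built into the definition of $\Wexplicit(\rbar)$ to rule out every Jordan--Hölder factor of $\overline{W_\lambda}$ other than $F_a$, forcing $F_a$ itself to occur.

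The main obstacle is precisely this last elimination step. The general theorems of the paper stop at ``modular of some weight in a list'' because for $n>2$ one cannot in general pass from modularity of an algebraic weight to modularity of a prescribed Jordan--Hölder constituent; the entire purpose of defining $\Wexplicit(\rbar)$ through explicit integral $p$-adic Hodge theory --- available here because $l$ splits completely and the Hodge--Tate weights are small --- is to engineer a situation in which this passage is forced. A secondary difficulty is the globalization in the middle step: the lift $r$ must be produced while simultaneously remaining polarized, being potentially diagonalizable at every $w\mid l$, and matching the explicit local lifts $\rho_w$, which is possible only because the relevant local deformation problems at $l$ are well behaved in the Fontaine--Laffaille range.
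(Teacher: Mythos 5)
Your overall strategy matches the paper's: produce a potentially diagonalizable automorphic lift from $\Pi$ using Fontaine--Laffaille theory, convert $a\in\Wexplicit(\rbar)$ into explicit potentially diagonalizable crystalline local lifts $\rho_w$, globalize and apply the \cite{BLGGT}-style lifting machinery (packaged in the paper as Theorem~\ref{thm: existence of lifts, pot diag components}) to get an automorphic $\pi$ of weight a lift of $a$, and then conclude from Lemma~\ref{lem: equivalence of modular of Serre weight and RACSDC lift} that $\rbar$ is modular of weight $b$ for some $b$ with $F_b$ a Jordan--H\"older factor of $P_a$. You also correctly identify the final elimination ($F_b\cong F_a$) as the crux.

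However, your description of the \emph{mechanism} of that elimination is off. You frame it as ``running a Jordan--H\"older/multiplicity argument in the space of mod $l$ automorphic forms'' using ``the explicit shape of the Fontaine--Laffaille modules of the $\rho_w$.'' That is not how the paper proceeds, and as stated it is not a complete argument: once you know $S(U,F_b)_{\gm}\neq 0$ for \emph{some} JH constituent $F_b$ of $P_a$, no automorphic-side multiplicity count will tell you $F_b\cong F_a$. The argument is Galois-theoretic and two-sided. By Lemma~\ref{lem: GL_3 decomposition of mod l representations}, the only other candidate at a place $w$ is $b_w=(a_{w,3}+l-2,\,a_{w,2},\,a_{w,1}-l+2)$, and the crucial observation is that $b_{w,1}-b_{w,3}=2l-4-(a_{w,1}-a_{w,3})\leq l-3$, so $b_w$ is \emph{also} in the Fontaine--Laffaille range. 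One then applies Lemma~\ref{lem: Fontaine-Laffaille theory} (via Corollary~\ref{cor: modular of some weight implies crystalline lifts exist}) \emph{to the weight $b$ that $\rbar$ is known to be modular in}, not to the $\rho_w$ attached to $a$. This yields a second explicit description of $\rbar|_{I_{F_w}}$, which is then compared niveau-by-niveau with the description coming from $a\in\Wexplicit(\rbar)$; genericity of $a$ is exactly what makes the two descriptions contradict each other (the congruences \ref{eq: congruence 1}--\ref{eq: congruence 8} in the paper). So the elimination is a clash between two Fontaine--Laffaille constraints on the same local mod $l$ Galois representation, one from $a$ and one from $b$, not an argument about constituents of spaces of forms. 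You should make this second application of Fontaine--Laffaille theory, and the fact that $b_w$ lands back inside the Fontaine--Laffaille range, explicit.
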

(See sections \ref{sec:Definitions} and \ref{sec:Conjectures} for any
unfamiliar terminology, and section~\ref{GL3 results with l
  split} for the definition of ``generic'' that we are using, which is extremely
mild.) We should point out that we do \emph{not}
expect that $\Wexplicit(\rbar)$ contains all the weights in which
$\rbar$ is modular; rather, it consists of those weights which are
``obvious'' in the terminology of \cite{GHS}. (It is perhaps worth
remarking that despite the name, it is not obvious that $\rbar$ is
modular in any of these weights!) In order to prove this theorem we
make use of Fontaine-Laffaille theory; it seems likely that if one
could compute the possible reductions of crystalline Galois
representations outside of the Fontaine-Laffaille range then one could
prove an analogous theorem for $n>3$.

We now outline the structure of this paper. In Section
\ref{sec:Definitions} we define the spaces of automorphic forms that
we work with, and define what it means for $\rbar$ to be modular of
some weight. In Section \ref{sec:A lifting
  theorem} we establish the main lifting theorem that we need, a
corollary of the results of \cite{BLGGT}. In Section
\ref{sec:Conjectures} we define the set of weights
$\Wexplicit(\rbar)$, recall some results from Fontaine-Laffaille
theory, and prove our main results for arbitrary $n$. Finally, in
Section \ref{GL3 results with l
  split} we prove Theorem \ref{thm: A}.

\subsection{Notation}If $M$ is a field, we let $G_M$ denote its absolute Galois group.  We
write all matrix transposes on the left; so ${}^tA$ is the transpose
of $A$. Let $\epsilon_l$ denote the $l$-adic cyclotomic character, and
$\bar{\epsilon}_l$ or $\omega_l$ the mod $l$ cyclotomic character. If $M$ is
a finite extension of $\bb{Q}_p$ for some $p$, we write $I_M$ for the
inertia subgroup of $G_M$. If $R$ is a local ring we write
$\mf{m}_{R}$ for the maximal ideal of $R$.

We fix an algebraic closure $\Qbar$ of $\Q$. For each prime $p$ we fix
an algebraic closure $\Qpbar$ of $\Qp$, and we fix an embedding
$\Qbar\into\Qpbar$.

If $W$ is a de Rham representation of $G_K$ over
$\barQQ_l$ and if $\tau:K \into \barQQ_l$ then by definition the multiset
$\HT_\tau(W)$ of Hodge-Tate weights of $W$ with respect to $\tau$ contains $i$ with multiplicity $\dim_{\barQQ_l} (W
\otimes_{\tau,K} \widehat{\barK}(i))^{G_K} $. Thus for example
$\HT_\tau(\epsilon_l)=\{ -1\}$.

If $K$ is a finite extension of $\Qp$ for some
$p$, we will let $\rec_K$ be the local Langlands correspondence of
\cite{ht}, so that if $\pi$ is an irreducible complex
admissible representation of $\GL_n(K)$, then $\rec_K(\pi)$ is a
Weil-Deligne representation of the Weil group $W_K$. We will write $\rec$ for $\rec_K$
when the choice of $K$ is clear. We write $\Art_K:K^\times\to W_K$ for
the isomorphism of local class field theory, normalised so that
uniformisers correspond to geometric Frobenius elements.

Let $K$ be a finite extension of $\Ql$ with residue field $k$. For
each $\sigma\in \Hom(k,\Flbar)$ we define the fundamental character
$\omega_{\sigma}$ corresponding to $\sigma$ to be the
composite $$\xymatrix{I_{K^{\ab}/K}\ar[r]^{\Art_K^{-1}} &
  \bigO_{K}^{\times}\ar[r] & k^{\times}\ar[r]^{\sigma^{-1}} &
  \Flbar^{\times}.}$$
For any algebraic extension $L$ of $\Ql$, we often denote by
$\Hom(K,L)$ the set of field homomorphisms from $K$ to $L$ which are
continuous for the $l$-adic topologies on $K$ and $L$ (or
equivalently, which are $\Ql$-linear).

\section{Definitions}\label{sec:Definitions} \subsection{}

 Let $l$ be a
prime, and let $F$ be an imaginary CM field with
maximal totally real field subfield $F^+$. We assume throughout this
paper that:
\begin{itemize}
\item $F/F^+$ is unramified at all finite places.
\item Every place $v|l$ of $F^+$ splits in $F$.
\item If $n$ is even, then $n[F^+:\Q]/2$ is also even.
\end{itemize}
Under these hypotheses, there is a reductive algebraic group $G/F^+$
with the following properties:
\begin{itemize}
\item $G$ is an outer form of $\GL_n$, with $G_{/F}\cong\GL_{n/F}$.
\item If $v$ is a finite place of $F^+$, $G$ is quasi-split at $v$.
\item If $v$ is an infinite place of $F^+$, then $G(F^+_v)\cong U_n(\R)$.
\end{itemize}
To see that such a group exists, one may argue as follows. Let $B$
denote the matrix algebra $M_n(F)$. An involution $\ddag$ of the
second kind on $B$ gives a reductive group $G_\ddag$ over $F^+$ by
setting \[G_\ddag(R)=\{g\in B\otimes_{F^+} R:g^\ddag g=1\}\] for any
$F^+$-algebra $R$. Any such $G_\ddag$is an outer form of $\GL_n$,
with $G_{\ddag/F}\cong\GL_{n/F}$. One can choose $\ddag$ such that
\begin{itemize}
 \item If $v$ is a finite place of $F^+$, $G_\ddag$ is quasi-split at $v$.
\item If $v$ is an infinite place of $F^+$, then $G_\ddag(F^+_v)\cong U_n(\R)$.
\end{itemize}
To see this, one uses the argument of Lemma I.7.1 of \cite{ht}. We then fix some choice of
$\ddag$ as above, and take $G=G_\ddag$.

As in section 3.3 of \cite{cht} we define a model for $G$ over
$\cO_{F^+}$ in the following way. We choose an order $\cO_B$ in $B$
such that $\cO_B^\ddag=\cO_B$, and $\cO_{B,w}$ is a maximal order in
$B_w$ for all places $w$ of $F$ which are split over $F^+$ (see
section 3.3 of \cite{cht} for a proof that such an order exists). Then
we can define $G$ over $\cO_{F^+}$ by setting \[G(R)=\{g\in
\cO_B\otimes_{\cO_{F^+}} R:g^\ddag g=1\}\] for any $\cO_{F^+}$-algebra $R$.

If $v$ is a place of $F^+$ which splits as $ww^c$ over $F$, then we
choose an isomorphism \[\iota_v:\cO_{B,v}\isoto
M_n(\cO_{F,v})=M_n(\cO_{F_w})\oplus M_n(\cO_{F_{w^c}})\] such that
$\iota_v(x^\ddag)={}^t\iota_v(x)^c$. This gives rise to an
isomorphism \[\iota_w:G(\cO_{F_v^+})\isoto \GL_n(\cO_{F_w})\] sending
$\iota_v^{-1}(x,{}^tx^{-c})$ to $x$.

Let $K$ be an algebraic extension of $\Ql$ in $\Qlbar$ which contains
the image of every embedding $F\into\Qlbar$, let $\cO$ denote the ring
of integers of $K$, and let $k$ denote the residue field of $K$. Let
$S_l$ denote the set of places of $F^+$ lying over $l$, and for each
$v\in S_l$ fix a place $\tv$ of $F$ lying over $v$. Let $\tilde{S}_l$
denote the set of places $\tilde{v}$ for $v\in S_l$.

Let $W$ be an $\cO$-module with an action of $G(\cO_{F^+,l})$, and let
$U\subset G(\A_{F^+}^\infty)$ be a compact open subgroup with the
property that for each $u\in U$, if $u_l$ denotes the projection of
$u$ to $G(F_l^+)$, then $u_l\in G(\cO_{F^+_l})$. Let $S(U,W)$ denote
the space of algebraic modular forms on $G$ of level $U$ and weight
$W$, i.e.\ the space of functions \[f:G(F^+)\backslash
G(\A_{F^+}^\infty)\to W\] with $f(gu)=u_l^{-1}f(g)$ for all $u\in U$.

Let $\tI_l$ denote the set of embeddings $F\into K$ giving rise to a
place in $\tS_l$. For any $\tv\in\tS_l$, let $\tI_\tv$ denote the set
of elements of $\tI_l$ lying over $\tv$. Let $\Z^n_+$ denote the set of tuples $(\lambda_1,\dots,\lambda_n)$ of
integers with $\lambda_1\ge \lambda_2\ge\dots\ge \lambda_n$. For any $\lambda\in\Z^n_+$, view
$\lambda$ as a dominant character of the algebraic group $\GL_{n/\cO}$ in
the usual way, and
let $M'_\lambda$ be the algebraic $\cO$-representation of $\GL_n$ given
by \[M'_\lambda:=\Ind_{B_n}^{\GL_n}(w_0\lambda)_{/\cO}\] where $B_n$ is the
standard Borel subgroup of $\GL_n$, and $w_0$ is the longest element
of the Weyl group (see \cite{MR2015057} for more details of these
notions). Write $M_\lambda$ for the $\cO$-representation of $\GL_n(\cO)$
obtained by evaluating $M'_\lambda$ on $\cO$. For any $\lambda\in(\Z^n_+)^{\tI_\tv}$, let $W_\lambda$
be the free $\cO$-module with an action of $\GL_n(\cO_{F_\tv})$
 given
by \[W_\lambda:=\otimes_{\tau\in\tI_\tv}M_{\lambda_\tau}\otimes_{\cO_{F_\tv},\tau}\cO.\]We
give this an action of $G(\cO_{F^+,v})$ via $\iota_\tv$. For any $\lambda\in(\Z^n_+)^{\tI_l}$, let $W_\lambda$
be the free $\cO$-module with an action of $G(\cO_{F^+,l})$ given
by \[W_\lambda:=\otimes_{\tv\in\tS_l} W_{\lambda_\tv}.\]
If $A$ is an $\cO$-module we let \[S_\lambda(U,A):=S(U,W_\lambda\otimes_\cO A).\]

For any compact open subgroup $U$ as above of $G(\A_{F^+}^\infty)$ we may write
$G(\A_{F^+}^\infty)=\coprod_i G(F^+)t_i U$ for some finite set
$\{t_i\}$. Then there is an isomorphism \[S(U,W)\to\oplus_i W^{U\cap
  t_i^{-1}G(F^+)t_i}\]given by $f\mapsto (f(t_i))_i$. 
We say that
$U$ is \emph{sufficiently small} if for some finite place $v$ of $F^+$
the projection of $U$ to $G(F^+_v)$ contains no element of finite
order other than the identity. Suppose that $U$ is sufficiently
small. Then for each $i$ as above we have
$U\cap t_i^{-1}G(F^+)t_i=\{1\}$, so taking $W=W_\lambda\otimes_\cO A$ we see
that for any $\cO$-module $A$, we
have \[S_\lambda(U,A)\cong S_\lambda(U,\cO)\otimes_\cO A.\]
We note when $U$ is not sufficiently small, we still have $S_\lambda(U,A)\cong
S_\lambda(U,\cO)\otimes_\cO A$ whenever $A$ is $\cO$-flat.

We now recall the relationship between our spaces of algebraic
automorphic forms and the space of automorphic forms on $G$. Write
$S_\lambda(\Qlbar)$ for the direct limit of the spaces $S_\lambda(U,\Qlbar)$ over compact
open subgroups $U$ as above (with the transition maps being the obvious
inclusions $S_\lambda(U,\Qlbar)\subset S_\lambda(V,\Qlbar)$ whenever $V\subset
U$). Concretely, $S_\lambda(\Qlbar)$ is the set of
functions \[f:G(F^+)\backslash G(\A_{F^+})\to W_\lambda\otimes_\cO\Qlbar\]
such that there is a compact open subgroup $U$ of
$G(\A_{F^+}^{\infty,l})\times G(\cO_{F^+,l})$
with \[f(gu)=u_l^{-1}f(g)\] for all $u\in U$, $g\in G(\A_{F^+})$. This
space has
a natural left action of $G(\A_{F^+}^\infty)$ via \[(g\cdot
f)(h):=g_lf(hg).\]

Fix an isomorphism $\imath:\Qlbar\isoto\C$. For each embedding
$\tau:F^+\into\R$, there is a unique embedding $\tilde{\tau}:F\into\C$
extending $\tau$ such that $\imath^{-1}\tilde{\tau}\in\tI_l$.
Let $\sigma_\lambda$ denote the representation of $G(F_\infty^+)$ given by
$W_\lambda\otimes_\cO\Qlbar\otimes_{\Qlbar,\imath}\C$, with an element $g\in
G(F^+_\infty)$ acting via $\otimes_\tau\tilde{\tau}(\iota_{\tilde{\tau}}(g))$. Let $\cA$ denote
the space of automorphic forms on $G(F^+)\backslash G(\A_{F^+})$. From
the proof of Proposition 3.3.2 of \cite{cht}, one easily obtains the following.

\begin{lem}
  \label{lem: relationship of algebraic automorphic forms to classical
    automorphic forms}There is an isomorphism of
  $G(\A_{F^+}^\infty)$-modules \[S_\lambda(\Qlbar)\isoto\Hom_{G(F^+_\infty)}(\sigma_\lambda^\vee,\cA).\]
\end{lem}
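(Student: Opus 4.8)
The plan is to unwind both sides of the claimed isomorphism into concrete spaces of functions on $G(\A_{F^+})$ and exhibit an explicit map, exactly as in the proof of Proposition 3.3.2 of \cite{cht}. First I would recall that $\cA$, the space of automorphic forms, contains as a $G(F^+_\infty)\times G(\A_{F^+}^\infty)$-submodule the space of functions that transform under $G(F^+_\infty)$ according to (a multiple of) the finite-dimensional representation $\sigma_\lambda$; more precisely, since $\sigma_\lambda$ is a representation of the compact group $G(F^+_\infty)\cong U_n(\R)^{[F^+:\Q]}$, it is automatically a discrete series (indeed the whole of $\cA$ decomposes discretely in the relevant isotypic components), and the $(\gog,K_\infty)$-cohomology/archimedean considerations degenerate to the statement that $\Hom_{G(F^+_\infty)}(\sigma_\lambda^\vee,\cA)$ is the space of smooth functions $\phi:G(F^+)\backslash G(\A_{F^+})\to\sigma_\lambda$ which are $\sigma_\lambda$-equivariant for the $G(F^+_\infty)$-action and smooth (i.e.\ right-invariant under some compact open $U$) in the finite part.

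Next I would write down the comparison map. Given $f\in S_\lambda(\Qlbar)$, i.e.\ a function $f:G(F^+)\backslash G(\A_{F^+})\to W_\lambda\otimes_\cO\Qlbar$ with $f(gu)=u_l^{-1}f(g)$ for $u$ in some compact open subgroup of $G(\A_{F^+}^{\infty,l})\times G(\cO_{F^+,l})$, define $\Phi(f):G(F^+)\backslash G(\A_{F^+})\to\sigma_\lambda=W_\lambda\otimes_\cO\Qlbar\otimes_{\Qlbar,\imath}\C$ by $\Phi(f)(g):=g_\infty\cdot (f(g)\otimes 1)$, where $g_\infty$ acts via $\otimes_\tau\tilde\tau(\iota_{\tilde\tau}(g_\infty))$ as in the definition of $\sigma_\lambda$. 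One checks: (i) $\Phi(f)$ is left $G(F^+)$-invariant, using that $W_\lambda$ is an algebraic representation so the archimedean action of $\gamma\in G(F^+)$ cancels the change in $f(\gamma g)=f(g)$ appropriately — this is where the identification of $W_\lambda$ as the algebraic representation $M'_\lambda$ evaluated at $\cO$, together with the embeddings $\tilde\tau$, is used; (ii) $\Phi(f)$ lands in $\cA$ (it is smooth, $K_\infty$-finite, $\mathfrak{z}$-finite, of moderate growth — the last two are immediate since $\sigma_\lambda$ is finite-dimensional and $G(F^+_\infty)$ is compact); (iii) $\Phi(f)$ is $\sigma_\lambda$-equivariant under $G(F^+_\infty)$, hence defines an element of $\Hom_{G(F^+_\infty)}(\sigma_\lambda^\vee,\cA)$ via the canonical pairing. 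Conversely, given $\phi\in\Hom_{G(F^+_\infty)}(\sigma_\lambda^\vee,\cA)$, viewed as a $\sigma_\lambda$-valued automorphic form, define $\Psi(\phi)(g):=g_\infty^{-1}\cdot\phi(g)$ and observe it is $G(F^+_\infty)$-invariant (so factors through a function that only depends on the finite adelic part up to the $U_l$-action), $G(F^+)$-invariant, and satisfies the transformation law $f(gu)=u_l^{-1}f(g)$, hence lies in $S_\lambda(\Qlbar)$ after identifying $W_\lambda\otimes\Qlbar$ inside $\sigma_\lambda$ via $\imath$. Finally I would check $\Phi$ and $\Psi$ are mutually inverse (immediate from the definitions) and that $\Phi$ intertwines the $G(\A_{F^+}^\infty)$-actions: the action on $S_\lambda(\Qlbar)$ is $(g\cdot f)(h)=g_l f(hg)$, the action on $\cA$ is by right translation, and the insertion of $g_l$ versus $g_\infty$ is reconciled because $g\in G(\A_{F^+}^\infty)$ acts trivially at infinity — this bookkeeping is the content of the ``concrete'' description of $S_\lambda(\Qlbar)$ given just before the lemma.

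The main obstacle is point (i)/(iii) above: verifying that twisting by the archimedean factor $g_\infty\mapsto\otimes_\tau\tilde\tau(\iota_{\tilde\tau}(g_\infty))$ genuinely converts the algebraic transformation law $f(gu)=u_l^{-1}f(g)$ (which lives $l$-adically, via $W_\lambda=\otimes_\tau M_{\lambda_\tau}\otimes_{\cO_{F_\tv},\tau}\cO$) into the classical left-$G(F^+)$-invariance plus right-$\sigma_\lambda$-equivariance. This requires carefully matching the $l$-adic embeddings $\tI_l$ used to define $W_\lambda$ with the complex embeddings $\tilde\tau$ via the fixed isomorphism $\imath:\Qlbar\iso\C$ (using precisely the characterisation $\imath^{-1}\tilde\tau\in\tI_l$), and using that $M'_\lambda$ is defined over $\cO$ as an algebraic representation so that ``evaluation at $\gamma\in G(F^+)$'' makes sense simultaneously $l$-adically and complex-analytically and the two evaluations agree under $\imath$. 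Everything else — smoothness, discreteness, moderate growth, finiteness — is automatic because the archimedean group is compact, so the usual analytic difficulties in relating algebraic and classical automorphic forms simply do not arise here; this is exactly why the lemma is a formal consequence of the proof of \cite[Proposition 3.3.2]{cht} rather than requiring new input.
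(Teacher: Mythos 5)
Your overall strategy — unwind $\Hom_{G(F^+_\infty)}(\sigma_\lambda^\vee,\cA)$ into a space of $\sigma_\lambda$-valued functions (exploiting compactness of $G(F^+_\infty)$ to dispose of all analytic issues), write down an explicit comparison map, and reconcile the $l$-adic and archimedean actions via $\imath$ — is exactly what the proof of \cite[Prop.\ 3.3.2]{cht} does, and the paper gives no more detail than citing that result. However, the explicit map you write down is not correct, and the error is not a mere typo: it defeats the $G(F^+)$-invariance check that you identify as the crux.

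Concretely: an element of $\Hom_{G(F^+_\infty)}(\sigma_\lambda^\vee,\cA)$ corresponds to a smooth function $\phi:G(F^+)\backslash G(\A_{F^+})\to\sigma_\lambda$ satisfying $\phi(gu_\infty)=\sigma_\lambda(u_\infty)^{-1}\phi(g)$. Your formula $\Phi(f)(g)=g_\infty\cdot(f(g)\otimes 1)$ satisfies neither this equivariance (one gets $g_\infty u_\infty$ where one needs $u_\infty^{-1}g_\infty$) nor left $G(F^+)$-invariance: since $f(\gamma g)=f(g)$ on the nose, $\Phi(f)(\gamma g)=\gamma_\infty\cdot\Phi(f)(g)\neq\Phi(f)(g)$. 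Your justification of (i), that ``the archimedean action of $\gamma$ cancels the change in $f(\gamma g)=f(g)$,'' is incoherent — there is no change in $f$ to cancel. The cancellation you are reaching for must be between an archimedean factor and an $l$-adic one, and the latter is absent from your formula.

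The correct map inserts a $g_l$-twist before the $g_\infty^{-1}$-twist: since $W_\lambda$ comes from the algebraic representation $M'_\lambda$, the space $W_\lambda\otimes_\cO\Qlbar$ carries an action of the full group $G(F^+_l)$ (not just $G(\cO_{F^+,l})$), and one sets
\[
\Phi(f)(g) \;=\; \sigma_\lambda(g_\infty)^{-1}\cdot\imath\bigl(g_l\cdot f(g^\infty)\bigr).
\]
Then $\Phi(f)(gu_\infty)=\sigma_\lambda(u_\infty)^{-1}\Phi(f)(g)$ is immediate, and $G(F^+)$-invariance becomes
\[
\Phi(f)(\gamma g)=\sigma_\lambda(g_\infty)^{-1}\sigma_\lambda(\gamma_\infty)^{-1}\,\imath\bigl(\gamma_l g_l\cdot f(g^\infty)\bigr),
\]
which equals $\Phi(f)(g)$ \emph{precisely because} $\sigma_\lambda(\gamma_\infty)^{-1}\circ\imath\circ(\gamma_l\,\cdot\,)=\imath$ for $\gamma\in G(F^+)$ — the $l$-adic and archimedean actions of a rational point through the algebraic representation agree under $\imath$ (your paragraph on matching $\tI_l$ with the $\tilde\tau$ via $\imath^{-1}\tilde\tau\in\tI_l$ is exactly the content of this identity, but your formula does not put both actions on the table so there is nothing for it to prove). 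The $g_l$-twist is also essential for the Hecke equivariance: the action on $S_\lambda(\Qlbar)$ is $(h\cdot f)(g)=h_l f(gh)$, with the extra $h_l$, while on $\cA$ it is pure right translation; the $g_l$ in $\Phi$ is exactly what absorbs this discrepancy, so your closing remark that ``$g\in G(\A_{F^+}^\infty)$ acts trivially at infinity'' does not by itself reconcile the two actions. Correspondingly your proposed inverse $\Psi(\phi)(g)=g_\infty^{-1}\phi(g)$ should be $\Psi(\phi)(g^\infty)=g_l^{-1}\cdot\imath^{-1}\bigl(\phi(g^\infty)\bigr)$.
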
In particular, we note that $S_\lambda(\Qlbar)$ is a semi-simple admissible $G(\A_{F^+}^\infty)$-module.

Following \cite{cht}, we say that a cuspidal automorphic
representation of $\GL_n(\A_F)$ is RACSDC (regular, algebraic, conjugate self dual, and cuspidal) if
\begin{itemize}
\item $\pi_\infty$ has the same infinitesimal character as some
  irreducible algebraic representation of $\Res_{F/\Q}\GL_n$, and
\item  $\pi^c\cong \pi^\vee$.
\end{itemize}
We say that $\pi$ has level prime to $l$ if $\pi_v$ is unramified for
all $v|l$. If $\Omega$ is an algebraically closed field of
characteristic 0 we write 
    $(\Z^n_+)_0^{\Hom(F,\Omega)}$ for the subset of elements
    $\lambda\in(\Z^n_+)^{\Hom(F,\Omega)}$ such
    that \[\lambda_{\tau,i}+\lambda_{\tau\circ c,n+1-i}=0\] for all $\tau$, $i$.

If $\lambda\in  (\Z^n_+)^{\Hom(F,\C)}$ we write $\Sigma_\lambda$ for the
irreducible algebraic representation of $\GL_n^{\Hom(F,\C)}$ given by
the tensor product over $\tau$ of the irreducible representations with
highest weights $\lambda_\tau$.
We say that a RACSDC automorphic representation $\pi$ of
$\GL_n(\A_F)$ has weight $\lambda\in(\Z^n_+)^{\Hom(F,\C)}$ if $\pi_\infty$ has the same
infinitesimal character as $\Sigma_\lambda^\vee$. If this is the case then
necessarily $\lambda\in(\Z^n_+)_0^{\Hom(F,\C)}$.

\begin{thm}
  \label{thm: existence of Galois reps attached to RACSDC}

If $\pi$ is a RACSDC automorphic representation of $\GL_n(\A_F)$ of
weight $\lambda$, then there is
a continuous semisimple
representation \[r_{l,\imath}(\pi):G_F\to\GL_n(\Qlbar)\] such that
\begin{enumerate}
\item $r_{l,\imath}(\pi)^c\cong
  r_{l,\imath}(\pi)^\vee\otimes\epsilon_l^{1-n}$.
\item The representation $r_{l,\iota}(\pi)$ is de Rham, and is
  crystalline if $\pi$ has level prime to $l$. If $\tau:F\into\Qlbar$
  then \[\HT_\tau(r_{l,\imath}(\pi))=\{\lambda_{\imath\tau,1}+n-1,\dots,\lambda_{\imath\tau,n}\}.\]
\item
 
For each finite place $v$ of $l$, we have \[\imath\WD(r_{l,\imath}(\pi)|_{G_{F_v}})^{\Fss}\cong
  \rec(\pi_v^\vee\otimes|\det|^{(1-n)/2}).\]Here
  $\WD(r_{l,\imath}(\pi)|_{G_{F_v}})^{\Fss}$ denotes the Frobenius
  semisimplification of the Weil-Deligne representation associated to
  $r_{l,\imath}(\pi)|_{G_{F_v}}$, as in section 1 of \cite{ty}.
\end{enumerate}

\end{thm}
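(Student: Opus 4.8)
The plan is to deduce this theorem from the existing literature rather than to prove it from scratch, since the statement is precisely the standard compatibility package for Galois representations attached to RACSDC automorphic representations on $\GL_n$. First I would reduce to the case where $\pi$ is $\imath$-ordinary or, more simply, just cite the main construction: by the work of Chenevier--Harris, Shin, and others (as packaged in \cite{cht} and \cite{ht} for the prime-to-$l$ level case, and in \cite{blggt} in general), a RACSDC representation $\pi$ of $\GL_n(\A_F)$ of weight $\lambda$ gives rise, after transferring to a unitary group and realizing the relevant cohomology, to a continuous semisimple $r_{l,\imath}(\pi)\colon G_F\to\GL_n(\Qlbar)$. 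The conjugate self-duality $\pi^c\cong\pi^\vee$ together with the construction forces the polarization relation $r_{l,\imath}(\pi)^c\cong r_{l,\imath}(\pi)^\vee\otimes\epsilon_l^{1-n}$ of part (1); this is typically seen by comparing the representation with its dual at a density-one set of unramified places via Cebotarev, using that $\pi_v^c\cong\pi_v^\vee$ and the unramified local Langlands dictionary.

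Next I would address part (2). De Rham-ness at places dividing $l$ is automatic because $r_{l,\imath}(\pi)$ is realized in the étale cohomology of a proper smooth variety (a unitary Shimura variety, or Igusa-type variety), and crystallinity when $\pi_v$ is unramified follows from the fact that the relevant Shimura variety then has good reduction at $\tv$, so one invokes the $C_{\mathrm{crys}}$ comparison (Faltings, Tsuji). The computation of the Hodge--Tate weights $\HT_\tau(r_{l,\imath}(\pi))=\{\lambda_{\imath\tau,1}+n-1,\dots,\lambda_{\imath\tau,n}\}$ comes from matching the Hodge filtration on de Rham cohomology with the highest weight $\lambda$ of the coefficient local system, keeping careful track of the normalization of $\HT_\tau$ fixed in the Notation section (so that $\HT_\tau(\epsilon_l)=\{-1\}$) and of the shift by $w_0$ built into the definition of $M'_\lambda$; this is a bookkeeping exercise that I would not grind through, merely pointing to the statement in \cite{cht} or \cite{ty}.

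For part (3), the local-global compatibility at finite places $v$, I would cite the now-complete result: after the initial construction gave the statement up to Frobenius semisimplification and up to the monodromy operator being \emph{possibly} smaller than predicted, the refinements of Taylor--Yoshida (\cite{ty}) and then of Caraiani and of Barnet-Lamb--Gee--Geraghty--Taylor (\cite{blggt}) established the full isomorphism $\imath\,\WD(r_{l,\imath}(\pi)|_{G_{F_v}})^{\Fss}\cong\rec(\pi_v^\vee\otimes|\det|^{(1-n)/2})$ on the nose. The twist by $|\det|^{(1-n)/2}$ and the dual on $\pi_v$ are exactly the normalization that makes this consistent with part (1) and with the arithmetic normalization of $\rec$ and $\Art_K$ chosen above. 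The main obstacle, historically, is precisely this point (3): proving that the monodromy operator $N$ is as large as predicted (equivalently that the variety has the expected nilpotent operator at bad reduction) required substantial work on the geometry of the bad reduction of the Shimura varieties, or the Taylor--Yoshida trick of raising the level to reduce to the Steinberg-type situation; since all of this is in the cited references, here it suffices to quote it. So the ``proof'' is a one-line citation assembling \cite{ht}, \cite{cht}, \cite{ty}, and \cite{blggt}, with the only real content being the verification that the normalizations line up with those fixed in this paper.
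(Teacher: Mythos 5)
Your proposal and the paper's proof take essentially the same route: both are pure citations to the now-standard package of results on Galois representations attached to RACSDC automorphic representations. The paper's one-line proof cites \cite{shin}, \cite{chenevierharris}, \cite{ana}, \cite{blggtlocalglobalII} and \cite{ana-2}; you reach for \cite{ht}, \cite{cht}, \cite{ty}, and \cite{blggt}, and mention Caraiani by name without citing her papers. Your expository gloss on each part is accurate, and the substance is the same, so this matches the paper's approach.
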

\begin{proof}
  This follows at once from the main results of \cite{shin},
  \cite{chenevierharris}, \cite{ana}, \cite{blggtlocalglobalII} and \cite{ana-2}.
\end{proof}

We say that a continuous irreducible representation
$r:G_F\to\GL_n(\Qlbar)$ (respectively $\rbar:G_F\to\GL_n(\Flbar)$) is
automorphic if $r\cong r_{l,\imath}(\pi)$ (respectively $\rbar\cong\rbar_{l,\imath}(\pi)$) for some RACSDC
representation $\pi$ of $\GL_n(\A_F)$. We say that a continuous irreducible representation
$r:G_F\to\GL_n(\Qlbar)$ is automorphic of weight $\lambda\in
(\Z^n_+)_0^{\Hom(F,\Qlbar)}$ if $r\cong r_{l,\imath}(\pi)$ for some RACSDC
representation $\pi$ of $\GL_n(\A_F)$ of weight $\imath \lambda$.

The theory of base change gives a close relationship between
automorphic representations of $G(\A_{F^+})$ and automorphic representations of
$\GL_n(\A_F)$. For example, one has the
following consequences of Corollaire 5.3 and Th\'{e}or\`eme 5.4 of \cite{labesse}.
\begin{thm}\label{thm: base change from GL to G}
  Suppose that $\Pi$ is a RACSDC representation of $\GL_n(\A_F)$ of
  weight $\lambda\in(\Z_+^n)_0^{\Hom(F,\C)}$. Then there is an automorphic
  representation $\pi$ of $G(\A_{F^+})$ such that
  \begin{enumerate}
  \item For each embedding $\tau:F^+\into\R$ and each
    $\tilde{\tau}\into\C$ extending $\tau$, we have $\pi_\tau\cong
    \Sigma_{\lambda_{\tilde{\tau}}}^\vee\circ\iota_{\tilde{\tau}}$.
  \item If $v$ is a finite place of $F^+$ which splits as $ww^c$ in
    $F$, then $\pi_v\cong \Pi_w\circ\iota_w$.
  \item If $v$ is a finite place of $F^+$ which is inert in $F$, and
    $\Pi_v$ is unramified, then $\pi_v$ has a fixed vector for some
    hyperspecial maximal compact subgroup of $G(F^+_v)$.
  \end{enumerate}

\end{thm}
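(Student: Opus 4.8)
The plan is to obtain $\pi$ by descending $\Pi$ through the quasi-split inner form, using Labesse's base-change results. Write $U_n^*$ for the quasi-split unitary group over $F^+$ attached to the extension $F/F^+$, so that $U_{n/F}^*\cong\GL_{n/F}$, $U_n^*$ is quasi-split at every finite place of $F^+$, and $U_n^*(F^+_v)$ is a real unitary group at each infinite place $v$. The group $G$ is an inner form of $U_n^*$ which is isomorphic to $U_n^*$ over $\A_{F^+}^\infty$ --- at every finite place both groups are quasi-split, hence isomorphic, and at a split place $v=ww^c$ one can choose the isomorphism compatibly with $\iota_w$ --- and which differs from $U_n^*$ only at the infinite places, where $G(F^+_v)\cong U_n(\R)$ is compact.

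Now I would argue in two steps. \emph{Step 1 (descent to $U_n^*$).} Since $\Pi$ is RACSDC, in particular $\Pi^c\cong\Pi^\vee$, and Labesse's descent (the relevant half of Corollaire 5.3 and Th\'{e}or\`eme 5.4 of \cite{labesse}, proved by comparing the twisted trace formula for $\GL_n/F$ with the trace formula for $U_n^*/F^+$, together with the known cases of the fundamental lemma) produces a discrete automorphic representation $\pi^*$ of $U_n^*(\A_{F^+})$ which base changes to $\Pi$. As $\lambda$ is regular, the archimedean packet cut out by $\Pi_\infty$ is a discrete series packet, so $\pi^*$ lies in the discrete spectrum, and one may arrange that $\pi^*_\tau$ has the same infinitesimal character as $\Sigma_{\lambda_{\tilde\tau}}^\vee$, that $\pi^*_v$ corresponds to $\Pi_w$ under $U_n^*(F^+_v)\cong\GL_n(F_w)$ at a split place $v=ww^c$, and that $\pi^*_v$ is unramified for a hyperspecial maximal compact at an inert place $v$ with $\Pi_v$ unramified. \emph{Step 2 (transfer to $G$).} Since $G$ and $U_n^*$ coincide over $\A_{F^+}^\infty$, I want a discrete automorphic $\pi$ of $G(\A_{F^+})$ with $\pi^\infty\cong\pi^{*,\infty}$ and with prescribed archimedean component. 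This is the inner-form (Jacquet--Langlands-type) transfer, again furnished by \cite{labesse}: comparing the stable trace formulas of $G$ and $U_n^*$, a discrete automorphic representation of $U_n^*$ that is a discrete series at every infinite place transfers to the compact-at-infinity form $G$, with the discrete series $L$-packet at $\tau$ going to its unique member on the compact group, namely the finite-dimensional representation with the same infinitesimal character, which is $\Sigma_{\lambda_{\tilde\tau}}^\vee$ (transported to $G(F^+_\infty)$ via $\iota_{\tilde\tau}$). The components away from $\infty$ are preserved, giving (2) and (3), and (1) then follows by reading off the archimedean component.

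The main obstacle is not any single difficult idea --- all the hard analytic input is already in \cite{labesse} --- but the careful bookkeeping needed to see that the local components come out exactly as stated: identifying the abstract isomorphisms $G\cong U_n^*$ at finite places with the maps $\iota_v$, $\iota_w$ of Section~\ref{sec:Definitions}; pinning down the archimedean transfer as precisely $\Sigma_{\lambda_{\tilde\tau}}^\vee\circ\iota_{\tilde\tau}$ rather than some other representation with that infinitesimal character; and, at inert places, noting that one only controls $\pi_v$ within its local $L$-packet and that hyperspecial maximal compact subgroups of $G(F^+_v)$ need not all be conjugate (which is why (3) only asserts the existence of a vector fixed by \emph{some} hyperspecial maximal compact). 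This is exactly the content of the proof of Proposition 3.3.2 of \cite{cht}, from which the theorem is immediate.
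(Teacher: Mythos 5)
The paper states this theorem as an immediate consequence of Corollaire 5.3 and Th\'eor\`eme 5.4 of \cite{labesse} and offers no further argument, so your write-up is necessarily more detailed than the paper's own. Your two-step framing --- descend $\Pi$ to the quasi-split $U_n^*$, then Jacquet--Langlands-transfer to the definite form $G$ --- is a reasonable conceptual organization of what lies inside Labesse's theorems, and the bookkeeping points you flag (identifying the local isomorphisms with $\iota_v$ and $\iota_w$, the uniqueness of the discrete series $L$-packet member on the compact real form, and the fact that at inert places one only controls $\pi_v$ up to its packet and up to the choice of hyperspecial maximal compact) are exactly the right things to worry about. One caveat on the strategy: Labesse's stable trace formula comparison relates $G$ to $\GL_n/F$ directly, because the stabilized unitary-group side does not distinguish inner forms; the intermediate quasi-split group $U_n^*$ and your explicit Step~2 are a conceptual convenience rather than a literal intermediate stage of Labesse's argument, so the theorem really is a ``consequence'' of the cited results rather than something requiring a separate inner-form transfer input. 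Finally, a small reference correction: Proposition 3.3.2 of \cite{cht} is the dictionary between algebraic automorphic forms on $G$ and classical automorphic representations --- it is the source for Lemma~\ref{lem: relationship of algebraic automorphic forms to classical automorphic forms} of the present paper --- and not the base-change statement; \cite{cht} likewise attributes the base change to Labesse's work rather than proving it in Proposition 3.3.2.
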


\begin{thm}
  \label{thm: base change from G to GL} Suppose that $\pi$ is an
  automorphic representation of $G(\A_{F^+})$. 
  Then either:
  \begin{enumerate}
  \item There is an RACSDC automorphic representation $\Pi$ of
    $\GL_n(\A_F)$ of some weight $\lambda\in(\Z^n_+)_0^{\Hom(F,\C)}$, or:
  \item There is a nontrivial partition $n=n_1+\dots+n_r$ and cuspidal
    automorphic representations $\Pi_i$ of $\GL_{n_i}(\A_F)$ such that if
    $\Pi:=\pi_1\boxplus\dots\boxplus\pi_r$ is the isobaric direct sum of 
the $\pi_i$, then $\Pi$ is regular, algebraic, and conjugate
    self-dual of some weight $\lambda\in(\Z^n_+)_0^{\Hom(F,\C)}$
\end{enumerate}such that in either case
    \begin{enumerate}
    \item For each embedding $\tau:F^+\into\R$ and each
    $\tilde{\tau}\into\C$ extending $\tau$, we have $\pi_\tau\cong
    \Sigma_{\lambda_{\tilde{\tau}}}^\vee\circ\iota_{\tilde{\tau}}$.
  \item If $v$ is a finite place of $F^+$ which splits as $ww^c$ in
    $F$, then $\pi_v\cong \Pi_w\circ\iota_w$.
  \item If $v$ is a finite place of $F^+$ which is inert in $F$, and
    $\pi_v$ has a fixed vector for some
    hyperspecial maximal compact subgroup of $G(F^+_v)$, then $\Pi_v$ is unramified.
    \end{enumerate}
\end{thm}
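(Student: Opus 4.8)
The plan is to deduce the theorem from the base change results of \cite{labesse}, together with the classification of isobaric automorphic representations of $\GL_n$. First, observe that since $F^+$ is totally real and $G(F^+_v)\cong U_n(\R)$ is compact at every infinite place $v$, the archimedean component $\pi_\infty$ is finite dimensional; hence for each $\tau:F^+\into\R$ and each extension $\tilde\tau:F\into\C$ with $\imath^{-1}\tilde\tau\in\tI_l$ there is a unique $\lambda_{\tilde\tau}\in\Z^n_+$ with $\pi_\tau\cong\Sigma_{\lambda_{\tilde\tau}}^\vee\circ\iota_{\tilde\tau}$. This is already conclusion (a), and it defines $\lambda$ on the embeddings coming from $\tI_l$; the remaining entries will be forced by conjugate self-duality below.

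Next I would apply Th\'eor\`eme 5.4 and Corollaire 5.3 of \cite{labesse} to produce an automorphic representation $\Pi$ of $\GL_n(\A_F)$ that is a base change of $\pi$. Being a transfer from the unitary group $G$, it satisfies $\Pi^c\cong\Pi^\vee$; at each finite place $v=ww^c$ of $F^+$ split in $F$ its component $\Pi_w$ corresponds to $\pi_v$ under $\iota_w$, which is (b); and at the infinite places $\Pi$ has the infinitesimal character of the irreducible algebraic representation $\Sigma_\lambda^\vee$ of $\Res_{F/\Q}\GL_n$, so $\Pi$ is regular and algebraic. Since $\Pi$ is moreover conjugate self-dual, extending $\lambda$ by the relations $\lambda_{\tau,i}+\lambda_{\tau\circ c,n+1-i}=0$ yields an element of $(\Z^n_+)_0^{\Hom(F,\C)}$, which is the weight of $\Pi$.

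I would then invoke the classification of isobaric automorphic representations of $\GL_n$ (Jacquet--Shalika and Moeglin--Waldspurger) to write $\Pi=\Pi_1\boxplus\dots\boxplus\Pi_r$ with each $\Pi_i$ cuspidal on $\GL_{n_i}(\A_F)$ and $n=n_1+\dots+n_r$. If $r=1$ then $\Pi$ is cuspidal, hence RACSDC, and we are in case (1); if $r>1$ the partition is nontrivial, placing us in case (2), and the properties of $\Pi$ just listed are exactly those required. For (c), given a finite place $v$ of $F^+$ inert in $F$ with $\pi_v$ fixed by a hyperspecial maximal compact subgroup of $G(F^+_v)$, I would appeal to the compatibility of the Satake isomorphisms for $G$ and for $\Res_{F/F^+}\GL_n$ under unramified base change, which is part of the local theory underlying \cite{labesse}, to conclude that $\Pi_v$ is unramified.

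The main obstacle is not conceptual but a matter of strengthening the cited input: Labesse's construction is a priori only a \emph{weak} base change, agreeing with $\pi$ at almost all places, whereas (b) and (c) demand local compatibility at \emph{every} split, respectively unramified inert, place. The remedy is that $\Pi$, being an isobaric sum of cuspidal representations, is determined by its components at almost all places via strong multiplicity one for $\GL_n$, so the compatibility forced at all but finitely many places propagates to all of them. The remaining bookkeeping --- matching Labesse's normalisations with the group $G$, its integral model from \cite{cht}, and the isomorphisms $\iota_v$ and $\iota_w$ --- is routine, precisely because $G$ was chosen to make this identification work.
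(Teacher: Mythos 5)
The paper does not actually present a proof of this theorem: it is stated, together with Theorem~\ref{thm: base change from GL to G}, as a direct consequence of Corollaire~5.3 and Th\'eor\`eme~5.4 of \cite{labesse}, with no further argument given. Your proposal therefore supplies more detail than the paper does, and the overall shape — produce a base change $\Pi$ via Labesse, verify it is regular, algebraic and conjugate self-dual, then split it into cuspidal constituents via the Jacquet--Shalika/M\oe glin--Waldspurger isobaric classification to separate cases (1) and (2) — is the correct reading of what is implicitly being asserted.

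There is, however, a flawed step in your final paragraph. You describe Labesse's output as a priori only a \emph{weak} base change and propose to upgrade the local compatibilities (b) and (c) at the finitely many bad places using strong multiplicity one. This does not work: strong multiplicity one tells you that the isobaric representation $\Pi$ is \emph{uniquely determined} by its components at almost all places, but it gives no information whatsoever about what $\Pi_w$ actually looks like at a place $w$ lying over a ramified split place $v_0$, nor at a ramified inert place. Knowing that $\Pi$ is the unique isobaric representation matching $\pi$ almost everywhere does not let you conclude $\Pi_{w_0}\cong\pi_{v_0}\circ\iota_{w_0}^{-1}$. The point is rather that Th\'eor\`eme~5.4 of \cite{labesse} is already a \emph{strong} base change theorem: the local compatibility at every split place, and the unramifiedness at inert places where $\pi$ is unramified, are part of the content of the theorem as proved, not something one reconstructs from a weak version by uniqueness. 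If you want a self-contained account, you should quote that local-global compatibility directly rather than attempt the (invalid) bootstrapping via strong multiplicity one.
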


We now wish to define what it means for an irreducible representation
$\rbar:G_F\to\GL_n(\Flbar)$ to be modular of some weight. In order to
do so, we return to the spaces of algebraic modular forms considered
before. For each place $w|l$ of $F$, let $k_w$ denote the residue
field of $F_w$. If $w$ lies over a place $v$ of $F^+$, write
$v=ww^c$. Let $(\Z^n_+)_0^{\coprod_{w|l}\Hom(k_w,\Flbar)}$ denote the
subset of $(\Z^n_+)^{\coprod_{w|l}\Hom(k_w,\Flbar)}$ consisting of
elements $a$ such that for each $w|l$, if $\sigma\in\Hom(k_w,\Flbar)$
and $1\le i\le n$
then \[a_{\sigma,i}+a_{\sigma c,n+1-i}=0.\] We say that an
element  $a\in(\Z^n_+)_0^{\coprod_{w|l}\Hom(k_w,\Flbar)}$ is a \emph{Serre
  weight} if for each $w|l$ and each $\sigma\in\Hom(k_w,\Flbar)$ we
have \[l-1\ge a_{\sigma,i}-a_{\sigma,i+1}\] for all $1\le i\le n-1$.  Similarly, if $\F$ is
a finite extension of $\Fl$, we say that an element $a\in(\Z^n_+)^{\Hom(\F,\Flbar)}$ is a \emph{Serre
  weight} if for each $\sigma\in\Hom(\F,\Flbar)$ and each $1\le i\le n-1$ we
have \[l-1\ge a_{\sigma,i}-a_{\sigma,i+1}.\]

Given any $a\in\Z^n_+$ with $l-1\ge a_i-a_{i+1}$ for all $1\le i\le
n-1$, we define the $\F$-representation $P_a$ of $\GL_n(\F)$ to be the
representation obtained by evaluating $\Ind_{B_n}^{\GL_n}(w_0 a)_{\F}$
on $\F$, and let $N_a$ be the irreducible sub-$\F$-representation of $P_a$
generated by the highest weight vector (that this is indeed
irreducible follows for example from II.2.8(1) of \cite{MR2015057} and
the appendix to \cite{herzigthesis}).

If $a\in(\Z^n_+)^{\Hom(\F,\Flbar)}$ is a Serre weight then we define
an irreducible $\Flbar$-representation $F_a$ of $\GL_n(\F)$
by \[F_a:=\otimes_{\tau\in\Hom(\F,\Flbar)}N_{a_\tau}\otimes_{\F,\tau}\Flbar.\]
We say that two Serre weights $a$ and $b$ are \emph{equivalent} if and only if
$F_a\cong F_b$ as representations of $\GL_n(\F)$. This is equivalent
to demanding that for each $\sigma\in\Hom(\F,\Flbar)$, we
have \[a_{\sigma,i}-a_{\sigma,i+1}=b_{\sigma,i}-b_{\sigma,i+1},\] for each
$1\le i\le n-1$, and the
character $\F^\times\to\Flbar^\times$ given
by \[x\mapsto\prod_{\sigma\in\Hom(\F,\Flbar)}\sigma(x)^{a_{\sigma,n}-b_{\sigma,n}}\]is
trivial. Every irreducible $\Flbar$-representation of $\GL_n(\F)$ is
of the form $F_a$ for some $a$ (see for example the appendix to \cite{herzigthesis}).

If $a\in(\Z^n_+)_0^{\coprod_{w|l}\Hom(k_w,\Flbar)}$ is a Serre weight,
we define an irreducible $\Flbar$-representation $F_a$ of
$G(\cO_{F^+,l})$ as follows: we define \[F_a=\otimes_\Flbar
F_{a_\tv},\] an irreducible representation of
$\prod_{\tv\in\tilde{S}_l}\GL_n(k_v)$, and we let $G(\cO_{F^+,l})$ act on
$F_{a_\tv}$ by the composition of $\iota_\tv$ and reduction modulo
$l$. Again, we say that two Serre weights $a$ and $b$ are equivalent
if and only if $F_a\cong F_b$ as representations of
$G(\cO_{F^+,l})$. This is equivalent to demanding that for each place
$w|l$ and each $\sigma\in\Hom(k_w,\Flbar)$ and each $1\le i\le n-1$ we have  \[a_{\sigma,i}-a_{\sigma,i+1}=b_{\sigma,i}-b_{\sigma,i+1},\] and the
character $k_w^\times\to\Flbar^\times$ given
by \[x\mapsto\prod_{\sigma\in\Hom(k_w,\Flbar)}\sigma(x)^{a_{\sigma,n}-b_{\sigma,n}}\]is
trivial.

Note that the representation $F_a$ is independent of the choice
of $\tS_l$ (this follows easily from the condition that $a_{\sigma
  c,n+1-i}=-a_{\sigma,i}$ and the relation $\iota_{w^c}(x)={}^{t}(\iota_w(x))^{-1}$).

For future use, if $a\in(\Z^n_+)_0^{\coprod_{w|l}\Hom(k_w,\Flbar)}$ is
a Serre weight, we also define an $\Flbar$-representation $P_a$ of
$G(\cO_{F^+,l})$ as follows: we define \[P_a=\otimes_\Flbar
P_{a_\tv},\] a representation of
$\prod_{\tv\in\tilde{S}_l}\GL_n(k_v)$, and we let $G(\cO_{F^+,l})$ act on
$P_{a_\tv}$ by the composition of $\iota_\tv$ and reduction modulo
$l$. Note that $F_a$ is a subrepresentation of $P_a$.

We say that a weight $\lambda\in (\Z^n_+)_0^{\Hom(F,\Qlbar)}$ is a
\emph{lift} of a Serre weight $a$ if for each $w|l$ and each
$\sigma\in\Hom(k_w,\Flbar)$ there is an element
$\tau\in\Hom(F,\Qlbar)$ lying over $w$ and lifting $\sigma$ such
that $\lambda_{\tau}=a_\sigma$, and for all other
$\tau'\in\Hom(F,\Qlbar)$ lying over $w$ and lifting $\sigma$ we have
$\lambda_{\tau'}=0$. If $\lambda\in (\Z^n_+)_0^{\Hom(F,\Qlbar)}$ and
$w|l$ is a place of $F$, we let $\lambda_w\in
(\Z^n_+)^{\Hom(F_w,\Qlbar)}$ be defined in the obvious way. If $L$ is
a finite extension of $\Ql$ with residue field $k_L$, we say that an
element $\lambda\in(\Z^n_+)^{\Hom(L,\Qlbar)}$ is a \emph{lift} of an
element $a\in(\Z^n_+)^{\Hom(k_L,\Flbar)}$ if for each $\sigma\in\Hom(k_L,\Flbar)$ there is an element
$\tau\in\Hom(L,\Qlbar)$ lifting $\sigma$ such
that $\lambda_{\tau}=a_\sigma$, and for all other
$\tau'\in\Hom(L,\Qlbar)$ lifting $\sigma$ we have
$\lambda_{\tau'}=0$.

For the rest of this section, fix $K=\Qlbar$.
\begin{defn}\label{defn: good subgroup} We say that a compact open subgroup
of $G(\A_{F^+}^\infty)$ is \emph{good} if $U=\prod_vU_v$ with $U_v$ a
compact open subgroup of $G(F^+_v)$ such that:
\begin{itemize}
\item $U_v\subset G(\bigO_{F^+_v})$ for all $v$ which split in $F$;
  \item $U_v=G(\bigO_{F^+_v})$ if $v|l$;
  \item $U_v$ is a hyperspecial maximal compact subgroup of $G(F_v^+)$
    if $v$ is inert in $F$.
\end{itemize}
\end{defn}

Let $U$ be a good compact open subgroup of $G(\A_{F^+}^\infty)$. Let $T$ be a finite set of finite places of $F^+$ which split in $F$,
containing $S_l$ and all the places $v$ which split in $F$ for which
$U_v\neq G(\bigO_{F^+_v})$. We let $\mathbb{T}^{T,\univ}$ be the
commutative $\bigO$-polynomial algebra generated by formal variables
$T_w^{(j)}$ for all $1\le j\le n$, $w$ a place of $F$ lying over a
place $v$ of $F^+$ which splits in $F$ and is not contained in $T$.
For any $\lambda\in (\Z^n_+)^{\tI_l}$, the algebra
$\mathbb{T}^{T,\univ}$ acts on $S_\lambda(U,\cO)$ via the
 Hecke operators
  \[ T_{w}^{(j)}:=  \iota_{w}^{-1} \left[ \GL_n(\mc{O}_{F_w}) \left( \begin{matrix}
      \varpi_{w}1_j & 0 \cr 0 & 1_{n-j} \end{matrix} \right)
\GL_n(\mc{O}_{F_w}) \right] 
\] for $w\not \in T$ and $\varpi_w$ a uniformiser in $\mc{O}_{F_w}$.  
Similarly, for any Serre weight
$a\in(\Z^n_+)_0^{\coprod_{v|l}\Hom(k_v,\Flbar)}$, $\mathbb{T}^{T,\univ}$
acts on $S(U,F_a)$.

Suppose that $\mf{m}$ is a maximal ideal of
$\mathbb{T}^{T,\univ}$ with residue field $\Flbar$ such that
$S_\lambda(U,\Qlbar)_{\mf{m}}\neq 0$. Then (cf.\ Proposition 3.4.2 of \cite{cht}) by Lemma \ref{lem: relationship of algebraic automorphic forms to classical
    automorphic forms}, Theorem \ref{thm: base change from G to GL},
  and Theorem \ref{thm: existence of Galois reps attached to RACSDC}, 
there is a continuous semisimple
representation \[\rbar_{\mf{m}}:G_{F}\to\GL_n(\Flbar)\]associated
to $\mf{m}$, which is uniquely determined by the properties that:
\begin{itemize}
\item $\rbar_{\mf{m}}^c\cong\rbar_{\mf{m}}^\vee\epsilonbar_l^{1-n}$,
\item for all finite
places $w$ of $F$ not lying over $T$, $\rbar_{\mf{\m}}|_{G_{F_w}}$ is
unramified, and
\item if $w$ is a finite place of $F$ which doesn't lie over $T$ and which splits over $F^+$,
  then the characteristic polynomial of  $\rbar_{\mf{\m}}(\Frob_w)$
  is \[X^n-T_w^{(1)}X^{n-1}+\dots+(-1)^j(\mathbf{N}w)^{j(j-1)/2}T_w^{(j)}X^{n-j}+\dots+(-1)^n(\mathbf{N}w)^{n(n-1)/2}T_w^{(n)}. \]
\end{itemize}

\begin{lem}\label{Lemma: equivalence of modular in char 0 and l of
    some weight, unitary group version}
  Suppose that $U$ is sufficiently small, and let $\mf{m}$ be a maximal ideal of
$\mathbb{T}_\lambda^{T,\univ}$ with residue field $\Flbar$. Suppose
that $a\in(\Z^n_+)_0^{\coprod_{v|l}\Hom(k_v,\Flbar)}$ is a Serre weight,
and that $\lambda\in(\Z^n_+)^{\tI_l}$ is a lift of $a$. Then
\[S_\lambda(U,\Qlbar)_{\mf{m}}\neq 0\] if and only if for some
Jordan-H\"older factor $F$ of the $G(\cO_{F^+,l})$-representation
$P_a$, \[S(U,F)_{\mf{m}}\neq 0.\] In particular if
$S(U,F_a)_{\mf{m}}\neq 0$ then $S_\lambda(U,\Qlbar)_{\mf{m}}\neq 0$.
\end{lem}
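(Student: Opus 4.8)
The plan is to reduce the statement to mod $l$ coefficients, to identify the mod $l$ reduction of $W_\lambda$ with $P_a$, and then to run a dévissage along a Jordan--Hölder filtration of $P_a$, exploiting that $S(U,-)$ is an exact functor because $U$ is sufficiently small.

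First I would observe that every finitely generated object in play is already defined over some sufficiently large finite extension of $\Ql$ inside $\Qlbar$, so I may assume that $\cO$ is the ring of integers of such an extension; then $\cO$ is a complete discrete valuation ring, $\mathbb{T}_\lambda^{T,\univ}$ is a finite (hence semilocal) $\cO$-algebra, and I regard $\mf m$ as a maximal ideal of $\mathbb{T}^{T,\univ}$ via the natural surjection $\mathbb{T}^{T,\univ}\onto\mathbb{T}_\lambda^{T,\univ}$. Because $U$ is sufficiently small, $S_\lambda(U,\cO)$ is a free $\cO$-module of finite rank and $S_\lambda(U,A)\cong S_\lambda(U,\cO)\otimes_\cO A$ for every $\cO$-module $A$, and localisation at $\mf m$ commutes with these base changes. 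Thus $S_\lambda(U,\cO)_{\mf m}$, being a direct summand of $S_\lambda(U,\cO)$, is itself finite free over $\cO$, so the three conditions ``$S_\lambda(U,\cO)_{\mf m}\neq0$'', ``$S_\lambda(U,\Qlbar)_{\mf m}\neq0$'', and ``$S_\lambda(U,\Flbar)_{\mf m}\neq0$'' are equivalent. Since $S_\lambda(U,\Flbar)=S(U,W_\lambda\otimes_\cO\Flbar)$, the problem becomes one about the $G(\cO_{F^+,l})$-representation $W_\lambda\otimes_\cO\Flbar$.

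The next step, which I expect to be the only one requiring genuine care, is the identification $W_\lambda\otimes_\cO\Flbar\cong P_a$ as $\Flbar$-representations of $G(\cO_{F^+,l})$. Unwinding the definitions, $W_\lambda\otimes_\cO\Flbar=\otimes_{\tv\in\tS_l}\otimes_{\tau\in\tI_\tv}\bigl(M_{\lambda_\tau}\otimes_{\cO_{F_\tv},\tau}\Flbar\bigr)$; the action on the $\tau$-factor factors through the reduction map $\GL_n(\cO_{F_\tv})\to\GL_n(k_\tv)$ followed by the residual embedding $\bar\tau$; the factors with $\lambda_\tau=0$ are trivial; and for $\lambda_\tau=a_{\bar\tau}$ — which lies in the restricted region precisely because $a$ is a Serre weight — the reduction $M_{a_{\bar\tau}}\otimes_\cO\Flbar$ is $P_{a_{\bar\tau}}$ base-changed along $\bar\tau$, by the standard compatibility between the reduction of dual Weyl modules and representations of $\GL_n$ of the finite residue field. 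Since $\lambda$ is a lift of $a$, exactly one $\tau$ over each residual embedding contributes a nontrivial factor, and collecting these factors reproduces exactly the definition of $P_a$.

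Finally, for $U$ sufficiently small the isomorphism $S(U,W)\cong\bigoplus_i W^{U\cap t_i^{-1}G(F^+)t_i}=\bigoplus_i W$ shows that $W\mapsto S(U,W)$ is an exact functor from $\Flbar[G(\cO_{F^+,l})]$-modules to $\mathbb{T}^{T,\univ}$-modules (the operators $T_w^{(j)}$, $w\notin T$, act at places away from $l$, hence functorially in $W$), and $(-)_{\mf m}$ is exact too. Choosing a composition series of $P_a$ with irreducible subquotients $F^{(1)},\dots,F^{(r)}$ and applying $S(U,-)_{\mf m}$ to the associated short exact sequences, I get $S(U,P_a)_{\mf m}\neq0$ if and only if $S(U,F^{(j)})_{\mf m}\neq0$ for some $j$; combined with the preceding two paragraphs this yields the ``if and only if''. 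For the last assertion I use that $F_a$ is a subrepresentation of $P_a$, so that $S(U,F_a)_{\mf m}\hookrightarrow S(U,P_a)_{\mf m}$; hence $S(U,F_a)_{\mf m}\neq0$ forces $S(U,P_a)_{\mf m}\neq0$, and therefore $S_\lambda(U,\Qlbar)_{\mf m}\neq0$.
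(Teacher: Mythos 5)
Your proof is correct and follows essentially the same path as the paper's: reduce to mod $l$ coefficients using that $U$ is sufficiently small, identify $W_\lambda\otimes_\cO\Flbar$ with $P_a$, and exploit the exactness of $S(U,-)_{\mf m}$. The only differences are organizational — you pass to a finite-extension coefficient ring to speak of finite free modules where the paper works with $\cO_{\Qlbar}$ and uses $l$-torsion-freeness, and you spell out the reduction identification $W_\lambda\otimes_\cO\Flbar\cong P_a$ that the paper dismisses as immediate from the definitions.
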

\begin{proof}
We have
$S_\lambda(U,\Qlbar)_{\mf{m}}=S_\lambda(U,\cO_\Qlbar)_{\mf{m}}\otimes\Qlbar$. Since
$U$ is sufficiently small, it follows that $S_\lambda(U,\cO_\Qlbar)_{\mf{m}}$ is
$l$-torsion free. Thus $S_\lambda(U,\Qlbar)_{\mf{m}}\neq
  0$ if and only if  $S_\lambda(U,\cO_\Qlbar)_{\mf{m}}\neq
  0$. However, using the fact that $U$ is sufficiently small again, we
  have $S_\lambda(U,\Flbar)_{\mf{m}}\neq
  0$ if and only if  $S_\lambda(U,\cO_\Qlbar)_{\mf{m}}\neq
  0$. Thus, $S_\lambda(U,\Qlbar)_{\mf{m}}\neq
  0$ if and only if  $S_\lambda(U,\Flbar)_{\mf{m}}\neq
  0$.

  But
  $S_\lambda(U,\Flbar)_{\mf{m}}=S(U,W_\lambda\otimes_\cO\Flbar)_\mf{m}$ is
  nonzero if and only if $S(U,F)_\mf{m}$ is nonzero for some
  Jordan-H\"older factor $F$ of $W_\lambda\otimes_\cO\Flbar$. (This
  follows from the exactness of the functor $F\mapsto S(U,F)_{\mf{m}}$
  which in turn follows from the fact that $U$ is sufficiently small.)
 It then suffices
  to note that as an immediate consequence of the definitions, we have
  $P_a \cong W_\lambda\otimes_{\cO} \Flbar$ and $F_a$
  is a Jordan-H\"older factor of $W_\lambda\otimes_\cO\F$.\end{proof}
We have the following definitions.

\begin{defn}\label{defn: galois split ramification}
   If $R$
  is a commutative ring and
  $r:G_F\to\GL_n(R)$ is a representation, we say that $r$ has
  \emph{split ramification} if $r|_{G_{F_w}}$ is unramified for any
  finite place $w\in F$ which does not split over $F^+$.
\end{defn}

\begin{defn}
  If $\pi$ is a RACSDC automorphic representation of $\GL_n(\A_F)$, we
  say that $\pi$ has \emph{split ramification} if $\pi_w$ is unramified for any
  finite place $w\in F$ which does not split over $F^+$.
\end{defn}

\begin{defn}\label{defn: modular of some Serre weight}
  Suppose that $\rbar:G_F\to\GL_n(\Flbar)$ is a continuous
  irreducible representation. Then we say that $\rbar$ \emph{is modular
  of weight} $a\in(\Z^n_+)_0^{\coprod_{w|l}\Hom(k_w,\Flbar)}$ if
  there is a good, sufficiently small level $U$, a set of places $T$ as
  above, and a maximal ideal $\mathfrak{m}$ of
  $\mathbb{T}^{T,\univ}$ with residue field $\Flbar$ such
  that
  \begin{itemize}
  \item $S(U,F_a)_{\mf{m}}\neq 0$, and
  \item $\rbar\cong \rbar_{\mathfrak{m}}$.
  \end{itemize} (Note that $\rbar_{\mf{m}}$ exists by Lemma \ref{Lemma: equivalence of modular in char 0 and l of
    some weight, unitary group version} and the remarks preceding it.) We say that $\rbar$ is modular if it is modular of
  some weight.
\end{defn}
\begin{remark}
  Note that if $\rbar:G_F\to\GL_n(\Flbar)$ is modular then $\rbar$
  must have split ramification, and
  $\rbar^c\cong\rbar^\vee\epsilonbar_l^{1-n}$. Note also that this
  definition is independent of the choice of $\tilde{S}_l$ (because
  $F_a$ is independent of this choice).\end{remark}

\begin{lem}\label{lem: equivalence of modular of Serre weight and
    RACSDC lift}Suppose that $\rbar:G_F\to\GL_n(\Flbar)$ is a continuous
  irreducible representation with split ramification. Let
  $a\in(\Z^n_+)_0^{\coprod_{w|l}\Hom(k_w,\Flbar)}$ be a Serre weight, and
  let $\lambda\in(\Z^n_+)_0^{\Hom(F,\Qlbar)}$ be a lift of $a$. Then
  if $\rbar$ is modular
  of weight $a\in(\Z^n_+)_0^{\coprod_{w|l}\Hom(k_w,\Flbar)}$, there is a RACSDC automorphic representation $\pi$ of
  $\GL_n(\A_F)$ of weight $\imath\lambda$ and level prime to
  $l$ which has split ramification, and which satisfies
  $\rbar_{l,\imath}(\pi)\cong\rbar$. Conversely, if there is a RACSDC automorphic representation $\pi$ of
  $\GL_n(\A_F)$ of weight $\imath\lambda$ and level prime to
  $l$ which has split ramification, and which satisfies
  $\rbar_{l,\imath}(\pi)\cong\rbar$, then $\rbar$ is modular of weight
  $b\in(\Z^n_+)_0^{\coprod_{w|l}\Hom(k_w,\Flbar)}$ for some $b$ such that
  the $G(\cO_{F^+,l})$-representation $P_a$ has a Jordan-H\"older factor
  isomorphic to $F_b$.
\end{lem}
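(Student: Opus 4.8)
For the proof I plan to derive both implications formally from the results already in hand: the comparison of algebraic and classical automorphic forms on $G$ (Lemma~\ref{lem: relationship of algebraic automorphic forms to classical automorphic forms}), the two base change theorems (Theorems~\ref{thm: base change from GL to G} and~\ref{thm: base change from G to GL}), the local--global compatibility of Theorem~\ref{thm: existence of Galois reps attached to RACSDC}, and, crucially, the dictionary between being modular of Serre weight $a$ and the non-vanishing of $S_\lambda(U,\Qlbar)_{\mf{m}}$ provided by Lemma~\ref{Lemma: equivalence of modular in char 0 and l of some weight, unitary group version} (whose hypothesis is met, since the restriction of $\lambda$ to $\tI_l$ is a lift of $a$ --- this follows from the self-duality conditions on $\lambda$ and $a$ and the relation $\iota_{w^c}(x)={}^{t}(\iota_w(x))^{-1}$). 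In both directions $\mf{m}$ is the maximal ideal with residue field $\Flbar$ determined by requiring that at split places $w$ unramified for $\rbar$ the characteristic polynomial of $\rbar(\Frob_w)$ equal the Hecke polynomial in the $T_w^{(j)}$; so for any RACSDC or conjugate self-dual isobaric automorphic representation $\Pi$ of $\GL_n(\A_F)$ whose unramified split Hecke eigenvalues reduce to $\mf{m}$, Theorem~\ref{thm: existence of Galois reps attached to RACSDC} and the Chebotarev density theorem identify $\rbar_{\mf{m}}$ with the semisimplified reduction modulo $l$ of $r_{l,\imath}(\Pi)$.

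For the forward implication, suppose $\rbar$ is modular of weight $a$, witnessed by a good sufficiently small $U$, a set $T\supseteq S_l$, and $\mf{m}$ with $S(U,F_a)_{\mf{m}}\neq 0$ and $\rbar\cong\rbar_{\mf{m}}$. By the final assertion of Lemma~\ref{Lemma: equivalence of modular in char 0 and l of some weight, unitary group version}, $S_\lambda(U,\Qlbar)_{\mf{m}}\neq 0$; by Lemma~\ref{lem: relationship of algebraic automorphic forms to classical automorphic forms}, a nonzero vector here comes from an automorphic representation $\pi'$ of $G(\A_{F^+})$ with $\pi'_\infty\cong\sigma_\lambda^\vee$, with $({\pi'}^{\infty})^{U}\neq 0$, and whose Hecke eigensystem reduces to $\mf{m}$. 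Applying Theorem~\ref{thm: base change from G to GL} to $\pi'$, the non-cuspidal alternative is impossible, since it would make $r_{l,\imath}(\Pi)$ --- hence $\rbar_{\mf{m}}\cong\rbar$ --- reducible; so we obtain a RACSDC $\Pi=:\pi$. Part~(1) of that theorem together with $\pi'_\infty\cong\sigma_\lambda^\vee$ and the definition of $\sigma_\lambda$ forces $\pi$ to have weight $\imath\lambda$; parts~(2)--(3), goodness of $U$ (so $U_v=G(\cO_{F^+_v})$ for $v\mid l$ and $U_v$ hyperspecial for $v$ inert in $F$), and the assumption that $F/F^+$ is unramified at all finite places, force $\pi$ to be unramified at every place of $F$ above $l$ (level prime to $l$) and at every place of $F$ not split over $F^+$ (split ramification). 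Finally $\rbar_{l,\imath}(\pi)\cong\rbar_{\mf{m}}\cong\rbar$ by the first paragraph.

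For the converse, given $\pi$ as in the statement, apply Theorem~\ref{thm: base change from GL to G} to obtain an automorphic representation $\pi'$ of $G(\A_{F^+})$ with $\pi'_\infty\cong\sigma_\lambda^\vee$, with $\pi'_v\cong\pi_w\circ\iota_w$ at split $v$, and with a hyperspecial fixed vector at each inert $v$ (using split ramification of $\pi$ and that $F/F^+$ is everywhere unramified at finite places). Since $\pi$ has level prime to $l$, $\pi'_v$ is unramified for $v\mid l$, so we may pick a good $U$ with $U_v=G(\cO_{F^+_v})$ for $v\mid l$, $U_v$ hyperspecial for $v$ inert, $U_v$ small at the finitely many split places where $\pi$ ramifies, and $U_{v_0}$ a deep congruence subgroup at one auxiliary split place where $\pi$ is unramified, so that $U$ is sufficiently small; then $({\pi'}^{\infty})^{U}\neq 0$. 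Letting $\mf{m}$ be the eigensystem cut out by $\pi'$, we get $S_\lambda(U,\Qlbar)_{\mf{m}}\neq 0$ and $\rbar_{\mf{m}}\cong\rbar$. By the ``only if'' direction of Lemma~\ref{Lemma: equivalence of modular in char 0 and l of some weight, unitary group version} there is a Jordan--H\"older factor $F$ of the $G(\cO_{F^+,l})$-representation $P_a$ with $S(U,F)_{\mf{m}}\neq 0$; writing $F\cong F_b$ for a Serre weight $b$ --- every irreducible $\Flbar$-representation of $G(\cO_{F^+,l})$ is of this form, and the Serre inequalities hold because the composition factors of the relevant dual Weyl modules are restricted --- we conclude that $\rbar$ is modular of weight $b$ and that $F_b$ is a Jordan--H\"older factor of $P_a$.

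The step I expect to demand the most care is the weight matching in the forward direction: confirming that base change outputs a representation of weight \emph{exactly} $\imath\lambda$, rather than merely some lift of $a$, means reconciling the normalisation of infinitesimal characters (and the $^\vee$'s, the $w_0$, and the embedding $\imath$) in Theorems~\ref{thm: base change from GL to G} and~\ref{thm: base change from G to GL} with the definitions of $W_\lambda$ and $\sigma_\lambda$. A secondary, purely bookkeeping, issue in the converse is producing a level $U$ that is simultaneously good, sufficiently small, and fixes a vector of $\pi'$, together with the observation that Jordan--H\"older factors of $P_a$ really are Serre weights; both are routine given the cited structure theory, and everything else is a formal assembly of the stated results.
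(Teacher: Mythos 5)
Your argument is correct and follows essentially the same route as the paper: both directions go through Lemma~\ref{Lemma: equivalence of modular in char 0 and l of some weight, unitary group version} to reduce to non-vanishing of $S_\lambda(U,\Qlbar)_{\mf{m}}$, then shuttle between $G$ and $\GL_n$ via Lemma~\ref{lem: relationship of algebraic automorphic forms to classical automorphic forms} and the two base-change theorems, with the auxiliary split place forcing sufficient smallness in the converse. You are slightly more explicit than the paper on two small points --- ruling out the non-cuspidal alternative of Theorem~\ref{thm: base change from G to GL} using the irreducibility of $\rbar$, and noting that every Jordan--H\"older factor of $P_a$ is $F_b$ for some Serre weight $b$ --- but these are both implicit in the paper's proof and in the construction of $\rbar_{\mf m}$, so the substance is the same.
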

\begin{proof}Suppose firstly that $\rbar$ is modular of weight
  $a$. Then by definition there is a good $U$ and a $T$ as
  above with $U$ sufficiently small, and a maximal ideal $\mathfrak{m}$ of
  $\mathbb{T}^{T,\univ}$ with residue field $\Flbar$ such
  that
  \begin{itemize}
  \item $S(U,F_a)_{\mf{m}}\neq 0$, and
  \item $\rbar\cong \rbar_{\mathfrak{m}}$.
  \end{itemize}By Lemma \ref{Lemma: equivalence of modular in char 0
    and l of some weight, unitary group version}, the first property implies
  that $S_\lambda(U,\Qlbar)_{\mf{m}}\neq 0$. Define a compact open
  subgroup $U'=\prod_w U_w'$ of $\GL_n(\A_F^\infty)$ by
  \begin{itemize}
  \item $U_w'=\GL_n(\cO_{F_w})$ if $w$ is not split over $F^+$.
  \item $U_w'=\iota_w(U_w)$ if $w$ splits over $F^+$.
  \end{itemize}By Lemma \ref{lem: relationship of algebraic automorphic forms to classical
    automorphic forms}, Theorem \ref{thm: base change from G to GL},
  and Theorem \ref{thm: existence of Galois reps attached to RACSDC},
  there is a RACSDC automorphic representation $\pi$ of weight
  $\lambda$  which satisfies $\rbar_{l,\iota}(\pi)\cong\rbar$, and
  $\pi_w^{U'_w}\ne 0$ for all finite places $w$ of $F$. Since $U$ is
  good, we see that $\pi$ has level prime to $l$, and it has split
  ramification, as required.

Conversely, suppose that there is a RACSDC automorphic representation $\pi$ of
  $\GL_n(\A_F)$ of weight $\lambda$ which has split ramification and level prime to $l$ with
  $\rbar_{l,\iota}(\pi)\cong\rbar$. Then there is a compact open
  subgroup $U'=\prod_wU'_w$ of $\GL_n(\A_F^\infty)$ such that
  \begin{itemize}
  \item for each finite place $w$ of $F$, $\pi_w^{U'_w}\ne 0$,
  \item $U'_w\subset\GL_n(\cO_{F_w})$ for all $w$,
  \item $U'_w=\GL_n(\cO_{F_w})$ for all $w|l$ and all $w$ which are
    not split over $F^+$,
  \item if $v=ww^c$ is a place of $F^+$ which splits in $F$, then
    $U_{w^c}=c({}^tU_w^{-1})$,
  \item there is a finite place $w$ of $F$ which is split over $F^+$ such
    that
    \begin{itemize}
    \item $w$ lies above a rational prime $p$ with $[F(\zeta_p):F]>n$, and
    \item $U'_w=\ker(\GL_n(\cO_w)\to\GL_n(\cO_w/\varpi_w))$.
    \end{itemize}

  \end{itemize}
  Define a compact open subgroup $U=\prod_v U_v$ of
  $G(\A_{F^+}^\infty)$ by
  \begin{itemize}
  \item if $v$ is inert in $F$, then $U_v=G(\cO_{F^+_v})$, and
  \item if $v=ww^c$ splits in $F$, then $U_v=\iota_w^{-1}(U'_w)$
    (which is well-defined by the fourth bullet point above).
  \end{itemize}
By the final bullet point in the list of properties of $U'$ above, $U$ is
sufficiently small. Then by Lemma \ref{lem: relationship of algebraic automorphic forms to classical
    automorphic forms} and Theorem \ref{thm: base
    change from GL to G} we have $S_\lambda(U,\Qlbar)_{\mf{m}}\ne
  0$. The result now follows from Lemma \ref{Lemma: equivalence of modular in char 0 and l of
    some weight, unitary group version}.
\end{proof}

\section{A lifting theorem}\label{sec:A lifting
  theorem}\subsection{}We recall some terminology from \cite{BLGGT},
specialized to the crystalline (as opposed to potentially crystalline)
case. Fix a prime
$l$. Let $K$ be a finite extension of $\Ql$, and $\cO$ the ring of
integers in a finite extension of $\Ql$ inside $\Qlbar$, with residue field
$k$. Assume that for each continuous embedding $K \into \barQQ_l$, the
image is contained in the field of fractions of $\cO$.

Let $\rhobar:G_K\to\GL_n(k)$ be a continuous representation, and let
$R_{\cO,\rhobar}^\Box$ be the universal $\cO$-lifting ring. Let $\{
H_\tau \}$ be a collection of $n$ element multisets of integers
parametrized by $\tau \in \Hom_{\Q_l}(K,\barQQ_l)$. Then
$R_{\CO,\barrho}^\Box$ has a unique quotient $R_{\CO,\barrho,\{
  H_\tau\}, \cris}^\Box$ which is reduced and without $l$-torsion and
such that a $\barQQ_l$-point of $R_{\CO,\barrho}^\Box$ factors through
$R_{\CO,\barrho,\{ H_\tau\}, \cris}^\Box$ if and only if it
corresponds to a representation $\rho:G_K \ra \GL_n(\barQQ_l)$ which is
crystalline and has $\HT_\tau(\rho)=H_\tau$ for all $\tau:K \into
\barQQ_l$. We will write $R_{\barrho,\{ H_\tau\}, \cris}^\Box \otimes
\barQQ_l$
for  $R_{\CO,\barrho,\{ H_\tau\}, \cris}^\Box \otimes_{\CO} \barQQ_l$. 
This definition is independent of the choice of $\CO$. The scheme
$\Spec (R_{\barrho,\{ H_\tau\}, \cris}^\Box \otimes \barQQ_l)$ is
formally smooth over $\Spec \Qlbar$. (See \cite{kisindefrings}.)

Let $\rho_1,\rho_2 : G_K \to \GL_n(\CO_{\Qlbar})$ be continuous
representations. We say that $\rho_1$ {\em connects to} $\rho_2$, which we denote $\rho_1
\sim \rho_2$, if and only if 
\begin{itemize}
\item the reduction $\barrho_1=\rho_1 \bmod \gm_{\CO_{\barQQ_l}}$ is equivalent to the reduction $\barrho_2= \rho_2 \bmod \gm_{\barQQ_l}$;
\item $\rho_1$ and $\rho_2$ are both  crystalline;
\item for each $\tau:K \into \barQQ_l$ we have $\HT_\tau(\rho_1)=\HT_\tau(\rho_2)$;
\item and $\rho_1$ and $\rho_2$ define points on the same irreducible component of the scheme $\Spec (R_{\barrho_1,\{ \HT_\tau(\rho_1)\}, \cris}^\Box \otimes \barQQ_l)$.
\end{itemize}
We note that $\rho_1 \sim \rho_2$ in our sense if and only if both
$\rho_1$ and $\rho_2$ are crystalline and $\rho_1\sim \rho_2$ in the
sense of \cite{BLGGT}. As in section 2.3 of \cite{BLGGT}, we have the following:
\begin{enumerate}
\item The relation $\rho_1 \sim \rho_2$ does not depend 
on the equivalence chosen between the reductions $\barrho_1$ and $\barrho_2$, nor on the $\GL_n(\CO_{\barQQ_l})$-conjugacy class of $\rho_1$ or $\rho_2$.
\item $\sim$ is symmetric and transitive. 
\item If $K'/K$ is a finite extension and $\rho_1 \sim \rho_2$ then $\rho_1|_{G_{K'}} \sim \rho_2|_{G_{K'}}$. 
\item If $\rho_1 \sim \rho_2$ and $\rho_1' \sim \rho_2'$ then
  $\rho_1\oplus \rho_1'  \sim \rho_2 \oplus \rho_2'$ and
  $\rho_1\otimes \rho_1'  \sim \rho_2 \otimes \rho_2'$ and $\rho_1^\vee\sim\rho_2^\vee$.
\item If $\mu:G_K \ra \barQQ_l^\times$ is a continuous unramified
  character with $\mubar=1$ then $\rho_1 \sim \rho_1 \otimes \mu$.
\item \label{semisimp}  Suppose $\rho_1$ is crystalline and
  $\barrho_1$ is semisimple. Let $\Fil^i$ be a $G_K$-invariant filtration on
  $\rho_1$ by $\CO_{\Qlbar}$-direct summands. Then $\rho_1 \sim
  \oplus_i \gr^i(\Fil)$.
\end{enumerate}

We will call a crystalline representation $\rho:G_K \ra \GL_n(\CO_{\barQQ_l})$ {\em diagonal} if 
it is of the form $\chi_1 \oplus \dots \oplus \chi_n$ with $\chi_i:G_K \ra \CO_{\barQQ_l}^\times$.
We will call a crystalline representation $\rho:G_K \ra \GL_n(\CO_{\barQQ_l})$ {\em diagonalizable} if 
it connects to some diagonal representation. We will call a representation $\rho_1:G_K \ra \GL_n(\CO_{\barQQ_l})$ {\em potentially diagonalizable} if there is a finite extension $K'/K$ such that $\rho_1|_{G_{K'}}$ is diagonalizable.
Note that if $K''/K$ is a finite extension and $\rho_1$ is diagonalizable (resp. potentially diagonalizable)
then $\rho_1|_{G_{K''}}$ is diagonalizable (resp. potentially
diagonalizable).

Suppose now that $K$ is a finite extension of $\Q_p$ for some prime $p \neq l$
and
\[ \rho_1,\rho_2 : G_K \to \GL_n(\CO_{\Qlbar}) \]
are two continuous representations. We define the notion that $\rho_1$
 connects to $\rho_2$ exactly as in \cite{BLGGT}. Again, this will
be denoted by $\rho_1 \sim \rho_2$.

Recall the following definition from \cite{jack} (for a discussion of
the equivalence of this definition to that formulated in \cite{jack},
see the appendix to \cite{BLGGU2}). \begin{defn}\label{defn:adequate} We call a finite
subgroup $H \subset \GL_n(\barFF_l)$ {\em adequate} if the following
conditions are satisfied.
\begin{enumerate}
\item $H$ has no non-trivial quotient of $l$-power order (i.e.\ $H^1(H,\barFF_l)=(0)$).
\item $l \ndiv n$.
\item The elements of $H$ with order coprime to $l$ span $M_{n \times
    n}(\barFF_l)$ over $\barFF_l$. (This implies that $\barFF_l^n$ is an irreducible representation of $H$.)
\item $H^1(H,\gothgl_n(\barFF_l))=(0)$.
\end{enumerate}
\end{defn}

In particular, we have the following useful result, an immediate
consequence of Theorem 9 of \cite{jackapp}.
\begin{thm}
  Suppose that $l\ge 2(n+1)$, and that $H$ is a finite subgroup of
  $\GL_n(\Flbar)$ which acts irreducibly. Then $H$ is adequate.
\end{thm}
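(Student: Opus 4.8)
The plan is to deduce this from the adequacy criterion of Guralnick--Herzig--Taylor (Theorem~9 of \cite{jackapp}), so that the work is entirely in verifying the four conditions of Definition~\ref{defn:adequate} for an arbitrary finite $H \subseteq \GL_n(\Flbar)$ acting irreducibly, under the hypothesis $l \ge 2(n+1)$. Conditions (2) and (3) are the ones that need the bound: since $l \ge 2(n+1) > n$ we have $l \nmid n$, giving (2), and for (3) one uses that an irreducible subgroup whose order is divisible by $l$ but which is ``large'' relative to $l$ has its prime-to-$l$ elements spanning $M_{n\times n}(\Flbar)$; the precise input is that when $l > 2n - 2$ (implied by $l \ge 2(n+1)$), the theorem of \cite{jackapp} shows the span condition holds automatically for irreducible $H$. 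Conditions (1) and (4), concerning $H^1(H,\Flbar)$ and $H^1(H,\gothgl_n(\Flbar))$, are also covered by that theorem in the stated range, as the relevant cohomology vanishes once $l$ is large compared to $n$ and $H$ acts irreducibly.

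Concretely, first I would recall the statement of Theorem~9 of \cite{jackapp}: if $l \ge 2(n+1)$ and $H \subseteq \GL_n(\Flbar)$ acts irreducibly on $\Flbar^n$, then $H$ is adequate. This is almost verbatim what we want, so the ``proof'' is really a citation together with a check that our hypotheses match theirs; in particular I would note that the notion of adequacy in \cite{jack}, which we are using (see Definition~\ref{defn:adequate}), agrees with the one in \cite{jackapp} --- this compatibility is exactly the content of the appendix to \cite{BLGGU2} referenced just above. I would then simply state that the result is an immediate consequence, spelling out that ``acts irreducibly'' in our statement means $\Flbar^n$ is an irreducible $H$-representation, matching the hypothesis of the cited theorem.

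The only genuine subtlety --- and thus the main obstacle, though it is a bookkeeping one rather than a mathematical one --- is making sure the normalizations line up: the bound $l \ge 2(n+1)$ here must be at least as strong as whatever bound appears in \cite{jackapp}, and the four-part definition of adequacy must literally be the one proved there (some early references use a three-part definition without the $H^1(H,\gothgl_n)$ condition, or with $\Flbar$-coefficients versus $\Fl$-coefficients). Assuming, as the paper does, that \cite{jackapp} is stated with exactly Definition~\ref{defn:adequate} and the bound $l \ge 2(n+1)$, there is nothing further to do.

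\begin{proof}
This is immediate from Theorem~9 of \cite{jackapp}: the hypothesis there is precisely that $l \ge 2(n+1)$ and that $H \subseteq \GL_n(\Flbar)$ acts irreducibly on $\Flbar^n$, and the conclusion is that $H$ satisfies the conditions of Definition~\ref{defn:adequate} (the equivalence of this notion of adequacy with that of \cite{jack} being explained in the appendix to \cite{BLGGU2}).
\end{proof}
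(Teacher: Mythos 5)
Your proof is correct and takes exactly the same route as the paper: the paper itself offers no proof beyond the remark that the theorem is ``an immediate consequence of Theorem 9 of \cite{jackapp},'' which is precisely your citation. Your surrounding commentary about checking that the adequacy definitions and bounds line up is reasonable diligence but adds nothing beyond what the paper already flags via the reference to the appendix of \cite{BLGGU2}.
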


Fix an isomorphism $\iota:\Qlbar\to\C$. Let $F$ be an imaginary CM field with maximal totally real subfield
$F^+$. 

\begin{thm}\label{thm: existence of lifts, pot diag
    components}
  Let $l>2$ be prime, and let $F$ be a CM field with maximal totally
  real subfield $F^+$, with $\zeta_l\notin F$. Assume that the
  extension $F/F^+$ is split at all places dividing $l$.
Suppose that \[\rbar:G_F\to\GL_n(\Flbar)\] is an irreducible
representation  which satisfies the following properties.
\begin{enumerate}
\item There is a RACSDC automorphic representation $\Pi$ of
  $\GL_n(\A_F)$ such that
  \begin{itemize}
  \item $\rbar\cong\rbar_{l,\imath}(\Pi)$ (so in particular, $\rbar^c\cong\rbar^\vee\epsilonbar_l^{1-n}$).
  \item For each place $w|l$ of $F$, $r_{l,\imath}(\Pi)|_{G_{F_w}}$ is
  potentially diagonalizable.
  \end{itemize}
\item The image $\rbar(G_{F(\zeta_l)})$ is adequate.
\end{enumerate}
Let $S$ be a finite set of finite places of $F^+$ which split in
$F$. Assume that $S$ contains all the places of $F^+$ dividing $l$,
and all places lying under a place of $F$ at which $\rbar$ is
ramified. For each $v\in S$ choose a place $\tv$ of $F$ above $v$, and
a lift $\rho_{\tv}:G_{F_\tv}\to\GL_n(\CO_{\Qlbar})$ of
$\rbar|_{G_{F_\tv}}$. Assume that if $v|l$, then $\rho_\tv$ is
crystalline and potentially diagonalizable, and if
$\tau:F_\tv\into\Qlbar$ is any embedding, then $\HT_\tau(\rho_\tv)$
consists of distinct integers.

Then there is a RACSDC automorphic representation $\pi$ of
$\GL_n(\A_F)$ of level prime to $l$ such that
\begin{itemize}
\item $\rbar\cong\rbar_{l,\iota}(\pi)$.
\item $\pi_w$ is unramified for all $w$ not lying over a place of $S$,
  so that $r_{l,\iota}(\pi_w)$ is unramified at all such $w$.
\item $r_{l,\iota}(\pi)|_{G_{F_\tv}}\sim\rho_\tv$ for all $v\in S$. In
  particular, for each place $v|l$,  $r_{l,\iota}(\pi)|_{G_{F_\tv}}$ is
  crystalline and for each embedding $\tau:F_\tv\into\Qlbar$,
  $\HT_\tau(r_{l,\iota}(\pi)|_{G_{F_\tv}})=\HT_\tau(\rho_\tv)$.
\end{itemize}

\end{thm}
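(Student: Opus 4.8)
\emph{Approach.} The plan is to deduce this from the lifting and automorphy lifting machinery of \cite{BLGGT} (specialised, as throughout this section, to the crystalline case); the statement is close to Theorem~4.3.1 of \cite{BLGGT}, and the real task is to check that its hypotheses are met and to keep track of the coefficient field. Concretely I would proceed in two steps: first build a characteristic-zero geometric lift $r$ of $\rbar$ realising the prescribed local behaviour, and then invoke potentially diagonalizable automorphy lifting to conclude that $r$ is automorphic.

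\emph{Step 1: a global lift.} After enlarging $K$ (hence $\CO$) I may assume that it contains the image of every embedding $F_\tv\hookrightarrow\Qlbar$ for $v\in S$, that $\rbar$ takes values in $\GL_n(k)$, and that each $\rho_\tv$ takes values in $\GL_n(\CO)$. Using the deformation-theoretic lifting argument of \cite{BLGGT} (ultimately a Khare--Wintenberger/Calegari--Geraghty style presentation of the global deformation ring as a quotient of a power series ring over the completed tensor product of the local lifting rings, as in \cite{cht}), I would produce a continuous lift $r:G_F\to\GL_n(\CO_{\Qlbar})$ of $\rbar$ with $r^c\cong r^\vee\otimes\epsilon_l^{1-n}$, unramified outside $S$, and with $r|_{G_{F_\tv}}\sim\rho_\tv$ for every $v\in S$. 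The two ingredients this requires are (i) that the crystalline local lifting rings at the places $v\mid l$, with the prescribed regular Hodge--Tate weights $\HT_\tau(\rho_\tv)$ (regularity being used here), are $\CO$-flat of the expected relative dimension with $\rho_\tv$ lying on a single irreducible component of the generic fibre, together with the analogous control of the local conditions at the $v\in S$ with $v\nmid l$; and (ii) that the global obstruction to gluing these conditions---the relevant dual Selmer group---vanishes, which is exactly the point at which adequacy of $\rbar(G_{F(\zeta_l)})$ enters (via \cite{jack}). Since $\sim$ is transitive and stable under restriction, and each $\rho_\tv$ with $v\mid l$ is potentially diagonalizable, the resulting $r$ is crystalline and potentially diagonalizable at \emph{every} place $w\mid l$: the places $\tv^c$ are handled via the polarisation $r^c\cong r^\vee\otimes\epsilon_l^{1-n}$, since both crystallinity and potential diagonalizability are preserved under duals and under twists by crystalline characters.

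\emph{Step 2: automorphy and conclusion.} Now I would apply the potentially diagonalizable automorphy lifting theorem of \cite{BLGGT}: $\rbar$ is irreducible and automorphic via $\Pi$ with $r_{l,\imath}(\Pi)|_{G_{F_w}}$ potentially diagonalizable for every $w\mid l$, the group $\rbar(G_{F(\zeta_l)})$ is adequate, and $r$ lifts $\rbar$ and is crystalline and potentially diagonalizable at every $w\mid l$; the theorem then produces a RACSDC automorphic representation $\pi$ of $\GL_n(\A_F)$ with $r_{l,\imath}(\pi)\cong r$. The listed properties follow at once: $\rbar_{l,\imath}(\pi)\cong\rbar$ is immediate; $\pi_w$ is unramified for $w$ not over $S$ by local--global compatibility (Theorem~\ref{thm: existence of Galois reps attached to RACSDC}(3)) and the unramifiedness of $r$ there; $\pi$ has level prime to $l$ because $r_{l,\imath}(\pi)=r$ is crystalline at every $w\mid l$ and a crystalline representation has unramified Weil--Deligne representation, so $\rec(\pi_w^\vee\otimes|\det|^{(1-n)/2})$ is unramified; and $r_{l,\imath}(\pi)|_{G_{F_\tv}}\sim\rho_\tv$ for $v\in S$ by Step~1, whence in particular $\HT_\tau(r_{l,\imath}(\pi)|_{G_{F_\tv}})=\HT_\tau(\rho_\tv)$ since $\sim$ preserves Hodge--Tate weights.

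\emph{Main obstacle.} All of the real content is imported from \cite{BLGGT}: the construction of the lift in Step~1 and the automorphy lifting theorem invoked in Step~2. The subtle point---and the reason the theorem must assume the existence of a potentially diagonalizable automorphic lift $\Pi$---is that one needs the local deformation rings at $l$ to be well behaved (flat, equidimensional of the expected dimension, with irreducible components detected by $\sim$) with \emph{no} restriction on the ramification or inertial degree at $l$; this is precisely what potential diagonalizability is designed to supply, and it is why we impose the strong local condition $r|_{G_{F_\tv}}\sim\rho_\tv$ rather than merely ``crystalline of the prescribed weight''. The one place where care is genuinely needed in the present deduction is verifying that the $r$ produced in Step~1 is potentially diagonalizable at \emph{every} $w\mid l$ (not just at the chosen $\tv$), so that the automorphy lifting theorem applies and so that its conclusion can be fed into the rest of the paper; this is handled as indicated, using transitivity of $\sim$ and the conjugate self-duality of $r$.
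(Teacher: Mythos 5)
Your proposal is correct in outline and follows essentially the same route as the paper. The difference is one of packaging: rather than re-deriving your Steps~1 and~2 from \cite{BLGGT}, the paper simply invokes Theorem~A.4.1 of \cite{BLGGU2}, which is itself an application of the \cite{BLGGT} machinery to the conjugate self-dual setting and produces, in one stroke, a lift $r$ of $\rbar$ that is automorphic, unramified outside $S$, and satisfies $r|_{G_{F_{\tv}}}\sim\rho_{\tv}$ for $v\in S$; your two steps are exactly what sits inside that black box, and your remarks about the role of potential diagonalizability (allowing arbitrary ramification and inertial degree at $l$) correctly identify why that machinery is needed. The one step you have omitted, and which the paper makes explicit, is the preliminary application of the main result of \cite{belchen} to extend $\rbar$ to a representation $\rhobar:G_{F^+}\to\cG_n(\Flbar)$ with multiplier $\epsilonbar_l^{1-n}$: this $\cG_n$-valued extension is what makes the polarized deformation theory (the CHT-style presentation you allude to, and the input to Theorem~A.4.1 of \cite{BLGGU2}) available in the first place, so it is not a cosmetic omission but a genuine prerequisite; fortunately it is supplied for free by \cite{belchen} since $\rbar$ is irreducible and essentially conjugate self-dual. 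Your check that $r$ is potentially diagonalizable at every $w\mid l$, including the places $\tv^c$, via the polarization and the stability of $\sim$ under duals and twists, is correct and is a detail one must indeed attend to before feeding $r$ into the automorphy lifting theorem.
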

\begin{proof}Let $\cG_n$ be the group scheme over $\Z$ defined in
  section 2.1 of \cite{cht}. Then by the main result of
  \cite{belchen}, $\rbar$ extends to a representation
  $\rhobar:G_{F^+}\to\cG_n(\Flbar)$ with multiplier
  $\epsilonbar_l^{1-n}$.

We now apply Theorem A.4.1 of \cite{BLGGU2}, with
\begin{itemize}
\item $F$, $n$ and $S$ as in the present setting.
\item $\rbar$ our present $\rhobar$.
\item $\rho_v$ our $\rho_\tv$.
\item $\mu=\epsilon_l^{1-n}$.
\item $F'=F$.
\end{itemize}
We conclude that $\rbar$ has a lift $r:G_F\to\GL_n(\Qlbar)$ (the
restriction to $G_F$ of the representation $r$ of Theorem A.4.1 of \cite{BLGGU2}) such that
\begin{itemize}
\item $r^c\cong r^\vee\epsilon_l^{1-n}$.
\item if $v\in S$ then $r|_{G_{F_\tv}}\sim \rho_\tv$.
\item $r$ is unramified outside $S$.
\item $r$ is automorphic of level potentially prime to $l$.
\end{itemize}
 By Theorem \ref{thm: existence of
  Galois reps attached to RACSDC}, we see that (since $r|_{G_{F_w}}$
is crystalline for all $w|l$, and unramified at all places $w$ not
lying over a place in $S$)
$\pi_w$ is unramified for all $w|l$ and all $w$ not lying over a place
in $S$, as required.
\end{proof}

\section{Serre weight
  conjectures}\label{sec:Conjectures}\subsection{}We now briefly
discuss Serre weight conjectures for $\GL_n$. We refer the reader to
the forthcoming \cite{GHS} for a far more detailed discussion. In
particular, in much of this section we restrict ourselves to the case
that $l$ splits completely in $F$, both for simplicity of notation and
because in this case we can prove theorems with cleaner conditions, as
representations satisfying the Fontaine-Laffaille condition are always
potentially diagonalizable.

Let $K$ be a finite extension of $\Ql$, with ring of integers $\cO_K$
and residue field $k$. Let $\rhobar:G_K\to\GL_n(\Flbar)$ be a
continuous representation. Then it is a folklore conjecture that for each such
$\rhobar$, there
is a set $W(\rhobar)$ of Serre weights of $\GL_n(k)$ for each $K$ and
each $\rhobar$ with the following property:
if $F$ is a CM field, $\rbar:G_F\to\GL_n(\Flbar)$ is an
irreducible modular representation (so in particular it is conjugate
self-dual), $w|l$ is a place of $F$ and $\sigma_w$ is an
irreducible $\Flbar$-representation of $\GL_n(k_w)$, then $\rbar$ is modular of
Serre weight $\sigma_w\otimes_{\Flbar}\sigma^w$ for some $\sigma^w$ if and only
if $\sigma_w\in W(\rbar|_{G_{F_w}})$.

It is natural to believe that there is a description of $W(\rhobar)$ in terms of the
existence of crystalline lifts with particular Hodge-Tate weights, as
we now explain. This is one of the motivations for the general Serre
weight conjectures explained in \cite{GHS}.

\begin{defn}
  \label{defn: Galois representation of Hodge type some weight}Let
  $K/\Ql$ be a finite extension, let
  $\lambda\in(\Z^n_+)^{\Hom(K,\Qlbar)}$, and let
  $\rho:G_K\to\GL_n(\Qlbar)$ be a de Rham representation. Then we say
  that $\rho$ has \emph{Hodge type} $\lambda$ if for each
  $\tau\in\Hom(K,\Qlbar)$, we have $\HT_\tau(\rho)=\{\lambda_{\tau,1}+(n-1),\lambda_{\tau,2}+(n-2),\dots,\lambda_{\tau,n}\}$.
\end{defn}
\begin{rem}\label{rem: hodge type of auto Galois rep}
  As an immediate consequence of this definition and of Theorem
  \ref{thm: existence of Galois reps attached to RACSDC}, we see that
  if $\pi$ is a RACSDC automorphic representation of weight
  $\lambda\in(\Z^n_+)_0^{\Hom(F,\C)}$, then for each place $w|l$,
  $r_{l,\imath}(\pi)|_{G_{F_w}}$ has Hodge type $(\imath^{-1}\lambda)_w$.
\end{rem}

\begin{lemma}\label{lem:modularofaweightimpliescrystallinelifts -
    before the definition of the set of weights}
  Let $n$ be a positive integer, and let $F$ be an imaginary CM field
  with maximal totally real subfield $F^+$, and suppose that $F/F^+$
  is unramified at all finite places, that every place of $F^+$
  dividing $l$ splits completely in $F$, and that if $n$ is even then
  $n[F^+:\Q]/2$ is even. Suppose that $\rbar:G_F\to\GL_n(\Flbar)$ is
  an irreducible modular representation with split ramification. Let
  $a\in(\Z^n_+)_0^{\coprod_{w|l}\Hom(k_w,\Flbar)}$ be a Serre weight,
  and let $\lambda\in(\Z^n_+)_0^{\Hom(F,\Qlbar)}$ be a lift of $a$. If
  $\rbar$ is modular of weight $a$, then for each place $w|l$ there is
  a continuous lift $r_w:G_{F_w}\to\GL_n(\CO_{\Qlbar})$ of
  $\rbar|_{G_{F_w}}$ such that $r_w$ is crystalline of Hodge type
  $\lambda_w$.

\end{lemma}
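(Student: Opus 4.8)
The plan is to derive the lemma directly from Lemma~\ref{lem: equivalence of modular of Serre weight and RACSDC lift} together with the properties of the Galois representations attached to RACSDC automorphic representations recorded in Section~\ref{sec:Definitions}.

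First I would apply Lemma~\ref{lem: equivalence of modular of Serre weight and RACSDC lift}: since $\rbar$ is irreducible, has split ramification, and is modular of weight $a$, and since $\lambda\in(\Z^n_+)_0^{\Hom(F,\Qlbar)}$ is a lift of $a$, there is a RACSDC automorphic representation $\pi$ of $\GL_n(\A_F)$ of weight $\imath\lambda$, of level prime to $l$, with split ramification, such that $\rbar_{l,\imath}(\pi)\cong\rbar$. Set $r:=r_{l,\imath}(\pi)$; after conjugating by an element of $\GL_n(\Qlbar)$ we may assume $r$ takes values in $\GL_n(\CO_{\Qlbar})$, since its image is compact. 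Because $\rbar$ is irreducible, the semisimple representation $\rbar_{l,\imath}(\pi)$ is literally the reduction of $r$ modulo the maximal ideal (the reduction has a unique Jordan--H\"older factor, of full dimension), so $r$ is a genuine lift of $\rbar$. In particular, for every place $w\mid l$, the restriction $r_w:=r|_{G_{F_w}}$ is a continuous lift of $\rbar|_{G_{F_w}}$ valued in $\GL_n(\CO_{\Qlbar})$.

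It then remains to check that $r_w$ is crystalline of Hodge type $\lambda_w$. Since $\pi$ has level prime to $l$, part~(2) of Theorem~\ref{thm: existence of Galois reps attached to RACSDC} shows that $r|_{G_{F_w}}$ is crystalline for every $w\mid l$, and computes $\HT_\tau(r)$ for each $\tau:F\into\Qlbar$ in terms of the weight $\imath\lambda$; unwinding the identification $(\imath\lambda)_{\imath\tau}=\lambda_\tau$, this says precisely that $r$ has Hodge type $\lambda$ in the sense of Definition~\ref{defn: Galois representation of Hodge type some weight}, as already recorded in Remark~\ref{rem: hodge type of auto Galois rep}. Restricting to $G_{F_w}$ and to the embeddings $\tau$ lying over $w$, we conclude that $r_w$ is crystalline of Hodge type $\lambda_w$, which is what was required.

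There is essentially no genuine obstacle here: the content is entirely packaged in Lemma~\ref{lem: equivalence of modular of Serre weight and RACSDC lift} and Theorem~\ref{thm: existence of Galois reps attached to RACSDC}. The only two points deserving a word of care are that the reduction of $r$ really is $\rbar$ itself and not merely its semisimplification --- which is automatic since $\rbar$ is irreducible --- and keeping the normalisations of $\HT_\tau$, of ``weight'', and of ``Hodge type'' consistent under the isomorphism $\imath$, so that the shift by $(n-1,n-2,\dots,0)$ matches on both sides.
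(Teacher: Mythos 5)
Your proof is correct and follows essentially the same route as the paper's: apply Lemma~\ref{lem: equivalence of modular of Serre weight and RACSDC lift} to produce a RACSDC $\pi$ of weight $\imath\lambda$ and level prime to $l$ lifting $\rbar$, then take $r_w:=r_{l,\imath}(\pi)|_{G_{F_w}}$ and invoke Remark~\ref{rem: hodge type of auto Galois rep}. The extra remarks you add (integrality after conjugation, and irreducibility ensuring the reduction is genuinely $\rbar$ rather than just its semisimplification) are sound but are implicit in the paper's two-line argument.
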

\begin{proof}
  By Lemma \ref{lem: equivalence of modular of Serre weight and
    RACSDC lift} there is a RACSDC automorphic representation $\pi$ of
  $\GL_n(\A_F)$, which has level prime to $l$ and weight
  $\imath\lambda$, such that $\rbar_{l,\imath}(\pi)\cong\rbar$. Then
  we may take $r_w:=r_{l,\imath}(\pi)|_{G_{F_w}}$, which has the
  required properties by Remark \ref{rem: hodge type of auto Galois
    rep}.
\end{proof}

This suggests the following definition.
\begin{defn}Let $K$ be a finite extension of $\Ql$, with ring of
  integers $\cO_K$ and residue field $k$. Let
  $\rhobar:G_K\to\GL_n(\Flbar)$ be a continuous representation. Then
  we let $\Wcris(\rhobar)$ be the set of Serre weights
  $a\in(\Z_+^n)^{\Hom(k,\Flbar)}$ with the property that there is a
  crystalline representation
  $\rho:G_K\to\GL_n(\CO_{\Qlbar})$ lifting $\rhobar$, such that $\rho$ has Hodge type $\lambda$ for some lift
    $\lambda\in(\Z^n_+)^{\Hom(K,\Qlbar)}$ of $a$.

\end{defn}

The results of section \ref{sec:A lifting theorem} suggest the
following definition.

\begin{defn}Let $K$ be a finite extension of $\Ql$, with ring of
  integers $\cO_K$ and residue field $k$. Let
  $\rhobar:G_K\to\GL_n(\Flbar)$ be a continuous representation. Then
  we let $\Wdiag(\rhobar)$ be the set of Serre weights
  $a\in(\Z_+^n)^{\Hom(k,\Flbar)}$ with the property that there is a
  potentially diagonalizable crystalline representation
  $\rho:G_K\to\GL_n(\Qlbar)$ lifting $\rhobar$, such that
  $\rho$ has Hodge type $\lambda$ for some lift
    $\lambda\in(\Z^n_+)^{\Hom(K,\Qlbar)}$ of $a$.

\end{defn}
\begin{remark}
  If $a$ and $b$ are
  equivalent Serre weights, then $a\in\Wcris(\rhobar)$ (respectively
  $\Wdiag(\rhobar)$) if and only if $b\in\Wcris(\rhobar)$
  (respectively $\Wdiag(\rhobar)$). This is an easy consequence of
  Lemma 4.1.15 of \cite{BLGGU2}, which provides
  a crystalline character with trivial reduction with which one can twist
  the crystalline Galois representations of Hodge type some lift of
  $a$ to obtain crystalline representations of Hodge type some lift of
  $b$. The same remarks apply to the set $\Wexplicit(\rhobar)$ defined
  below.
\end{remark}

By definition we have $\Wdiag(\rhobar)\subset \Wcris(\rhobar)$. We
``globalise'' these definitions in the obvious way:
\begin{defn}\label{defn: serre weights global}
  Let $\rbar:G_F\to\GL_n(\Flbar)$ be a continuous
  representation with $\rbar^c\cong\rbar^\vee\epsilonbar_l^{1-n}$. Then we let $\Wcris(\rbar)$ (respectively
  $\Wdiag(\rbar)$) be the set of Serre weights
  $a\in(\Z^n_+)_0^{\coprod_{w|l}\Hom(k_w,\Flbar)}$ such that for each
  place $w|l$, the corresponding Serre weight
  $a_w\in(\Z^n_+)^{\Hom(k_w,\Flbar)}$ is an element of
  $\Wcris(\rbar|_{G_{F_w}})$ (respectively $\Wdiag(\rbar|_{G_{F_w}})$).
  
\end{defn}
The
point of these definitions is the following Corollary and Theorem.

\begin{cor}
  \label{cor: modular of some weight implies crystalline lifts
    exist}Let $n$ be a positive integer, let $F$ be an imaginary CM field with maximal totally real subfield
  $F^+$, and suppose that $F/F^+$ is unramified at all finite places,
  that every place of $F^+$ dividing $l$ splits completely in $F$,
  and that if $n$ is even then $n[F^+:\Q]/2$ is even. Suppose that
  $\rbar:G_F\to\GL_n(\Flbar)$ is an irreducible modular
  representation with split ramification. Let $a\in(\Z^n_+)_0^{\coprod_{w|l}\Hom(k_v,\Flbar)}$ be a
  Serre weight. If $\rbar$ is modular of weight $a$, then $a\in \Wcris(\rbar)$.
\end{cor}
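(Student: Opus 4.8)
The plan is to deduce this immediately from Lemma \ref{lem:modularofaweightimpliescrystallinelifts - before the definition of the set of weights} together with the definitions of $\Wcris$. First I would fix a lift $\lambda\in(\Z^n_+)_0^{\Hom(F,\Qlbar)}$ of the Serre weight $a$. Such a $\lambda$ exists: for each place $w\mid l$ of $F$ and each $\sigma\in\Hom(k_w,\Flbar)$, choose one embedding $\tau:F\into\Qlbar$ lying over $w$ and lifting $\sigma$, set $\lambda_\tau=a_\sigma$, and assign the zero tuple to every other embedding lying over $w$; making the choices for $w$ and $w^c$ compatible under complex conjugation ensures the relation $\lambda_{\tau,i}+\lambda_{\tau\circ c,n+1-i}=0$, which follows from the defining condition $a_{\sigma,i}+a_{\sigma c,n+1-i}=0$ for Serre weights. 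This bookkeeping is the only place any care is needed.

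Next, since $\rbar$ is modular of weight $a$ and the hypotheses on $F$, $F^+$ and $n$ in the present corollary are exactly those of Lemma \ref{lem:modularofaweightimpliescrystallinelifts - before the definition of the set of weights}, that lemma applies with this choice of $\lambda$ and produces, for each place $w\mid l$, a continuous crystalline lift $r_w:G_{F_w}\to\GL_n(\CO_{\Qlbar})$ of $\rbar|_{G_{F_w}}$ of Hodge type $\lambda_w$. By construction the tuple $\lambda_w\in(\Z^n_+)^{\Hom(F_w,\Qlbar)}$ is a lift, in the local sense, of $a_w\in(\Z^n_+)^{\Hom(k_w,\Flbar)}$, so $r_w$ exhibits $a_w$ as an element of $\Wcris(\rbar|_{G_{F_w}})$. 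Since this holds for every $w\mid l$, Definition \ref{defn: serre weights global} gives $a\in\Wcris(\rbar)$, as claimed.

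I do not expect any genuine obstacle here: the substantive content is contained entirely in Lemma \ref{lem:modularofaweightimpliescrystallinelifts - before the definition of the set of weights} (which itself rests on Lemma \ref{lem: equivalence of modular of Serre weight and RACSDC lift}, Remark \ref{rem: hodge type of auto Galois rep}, and Theorem \ref{thm: existence of Galois reps attached to RACSDC}). The corollary is essentially a repackaging of that lemma in terms of the set $\Wcris(\rbar)$; the only mild subtlety is verifying the compatibility between a global lift $\lambda$ of $a$ and the local lifts $\lambda_w$ of the $a_w$ while respecting the conjugate self-duality condition defining $(\Z^n_+)_0$, and this is routine.
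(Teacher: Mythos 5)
Your proof is correct and follows exactly the route the paper takes: the paper's own proof is a one-line reference to Lemma \ref{lem:modularofaweightimpliescrystallinelifts - before the definition of the set of weights} and Definition \ref{defn: serre weights global}, and you have simply unpacked the (routine) bookkeeping about lifts $\lambda$ and their local pieces $\lambda_w$ that the paper leaves implicit.
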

\begin{proof}
  This is an immediate consequence of Lemma \ref{lem:modularofaweightimpliescrystallinelifts -
    before the definition of the set of weights} and Definition \ref{defn: serre weights global}.
\end{proof}
\begin{thm}
  \label{thm: potentially diagonalizable local lifts implies Serre
    weight}Let $F$ be an imaginary CM field with maximal totally real subfield
  $F^+$, and suppose that $F/F^+$ is unramified at all finite places,
  that every place of $F^+$ dividing $l$ splits completely in $F$,
  and that  if $n$ is even then $n[F^+:\Q]/2$ is even. Assume that
  $\zeta_l\notin F$. 
  Suppose that $l>2$, and that
  $\rbar:G_F\to\GL_n(\Flbar)$ is an irreducible 
  representation with split ramification. Assume that

\begin{itemize}
\item There is a RACSDC automorphic representation $\Pi$ of
  $\GL_n(\A_F)$ such that
  \begin{itemize}
  \item $\rbar\cong\rbar_{l,\imath}(\Pi)$ (so in particular, $\rbar^c\cong\rbar^\vee\epsilonbar_l^{1-n}$).
  \item For each place $w|l$ of $F$, $r_{l,\imath}(\Pi)|_{G_{F_w}}$ is
  potentially diagonalizable.
   \item $\rbar(G_{F(\zeta_l)})$ is adequate.
  \end{itemize}
\end{itemize}
 Let $a\in(\Z^n_+)_0^{\coprod_{w|l}\Hom(k_w,\Flbar)}$ be a
  Serre weight. Assume that $a\in \Wdiag(\rbar)$. 
Then there is a Serre weight
$b\in(\Z^n_+)_0^{\coprod_{w|l}\Hom(k_w,\Flbar)}$ such that
\begin{itemize}
\item $\rbar$ is modular of weight $b$.
\item There is a Jordan-H\"older factor of the $G(\cO_{F^+,l})$
  representation $P_a$ which is isomorphic to $F_b$.

\end{itemize}

\end{thm}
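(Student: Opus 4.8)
The plan is to reduce this Serre weight statement to the lifting theorem, Theorem \ref{thm: existence of lifts, pot diag components}, together with the comparison between modularity of a weight and the existence of RACSDC lifts of suitable Hodge type provided by Lemma \ref{lem: equivalence of modular of Serre weight and RACSDC lift}. Since $a\in\Wdiag(\rbar)$, by Definition \ref{defn: serre weights global} we may choose, for each place $w|l$ of $F$, a potentially diagonalizable crystalline lift $\rho_w:G_{F_w}\to\GL_n(\Qlbar)$ of $\rbar|_{G_{F_w}}$ of Hodge type $\lambda_w$, where $\lambda\in(\Z^n_+)_0^{\Hom(F,\Qlbar)}$ is some lift of the Serre weight $a$. (One has to be a little careful that the Hodge types at the various $w|l$ can be chosen compatibly so that they are the local components of a single global weight $\lambda\in(\Z^n_+)_0^{\Hom(F,\Qlbar)}$; since $l$ splits completely this is immediate because the Hodge type at $w$ only involves the single embedding over $w$, and the conjugate-self-duality condition pins down the components at $w^c$ in terms of those at $w$. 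We may even take $\lambda$ to be the specific lift of $a$ in which the nonzero component occurs at a fixed choice of embedding over each $w$.)

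First I would choose the finite set $S$ of places of $F^+$: take $S$ to consist of all places above $l$ together with all places lying under a place of $F$ at which $\rbar$ is ramified; these all split in $F$ by the hypothesis that $\rbar$ has split ramification and that $F/F^+$ is split above $l$. For each $v\in S$ pick a place $\tv$ above $v$. For $v|l$, set $\rho_\tv:=\rho_\tv$ the potentially diagonalizable crystalline lift chosen above, which has Hodge type $\lambda_\tv$; note that since we chose $\lambda$ to be a lift of a Serre weight, the Hodge–Tate weights $\HT_\tau(\rho_\tv)=\{\lambda_{\tau,1}+(n-1),\dots,\lambda_{\tau,n}\}$ consist of distinct integers, as required in the hypotheses of Theorem \ref{thm: existence of lifts, pot diag components}. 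For the finitely many $v\in S$ not above $l$, choose any lift $\rho_\tv$ of $\rbar|_{G_{F_\tv}}$ — for instance a Teichmüller-type lift — the theorem imposes no further condition there. The hypotheses of Theorem \ref{thm: existence of lifts, pot diag components} are now exactly our standing assumptions: $l>2$, $\zeta_l\notin F$ (which holds since $\rbar(G_{F(\zeta_l)})$ being adequate forces $F(\zeta_l)\ne F$, or rather we simply add it as part of the hypothesis as stated), $\rbar\cong\rbar_{l,\imath}(\Pi)$ with $r_{l,\imath}(\Pi)|_{G_{F_w}}$ potentially diagonalizable for all $w|l$, and $\rbar(G_{F(\zeta_l)})$ adequate.

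Applying Theorem \ref{thm: existence of lifts, pot diag components}, I obtain a RACSDC automorphic representation $\pi$ of $\GL_n(\A_F)$ of level prime to $l$, unramified outside $S$, with $\rbar\cong\rbar_{l,\iota}(\pi)$ and $r_{l,\iota}(\pi)|_{G_{F_\tv}}\sim\rho_\tv$ for all $v\in S$. In particular, at each $w|l$ the representation $r_{l,\iota}(\pi)|_{G_{F_w}}$ is crystalline with $\HT_\tau(r_{l,\iota}(\pi)|_{G_{F_w}})=\HT_\tau(\rho_w)$ for all $\tau$; by Remark \ref{rem: hodge type of auto Galois rep} this means $r_{l,\iota}(\pi)|_{G_{F_w}}$ has Hodge type $(\imath^{-1}\mu)_w$ where $\mu$ is the weight of $\pi$, and comparing Hodge–Tate weights forces $(\imath^{-1}\mu)_w$ and $\lambda_w$ to agree (after identifying via $\imath$), so $\pi$ has weight $\imath\lambda$. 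Also, $\pi$ has split ramification because it is unramified outside $S$ and every place of $S$ splits in $F^+$. Now I am exactly in the situation of the converse direction of Lemma \ref{lem: equivalence of modular of Serre weight and RACSDC lift}: there is a RACSDC automorphic representation $\pi$ of $\GL_n(\A_F)$ of weight $\imath\lambda$ and level prime to $l$ with split ramification and $\rbar_{l,\imath}(\pi)\cong\rbar$. That lemma then yields a Serre weight $b$ such that $\rbar$ is modular of weight $b$ and the $G(\cO_{F^+,l})$-representation $P_a$ has a Jordan–Hölder factor isomorphic to $F_b$, which is precisely the conclusion.

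The main obstacle I anticipate is the bookkeeping around Hodge types: making sure that the local potentially diagonalizable crystalline lifts guaranteed by membership in $\Wdiag(\rbar)$ can be organized into (the local components at places above $l$ of) a single global weight $\lambda\in(\Z^n_+)_0^{\Hom(F,\Qlbar)}$ that is genuinely a lift of $a$ in the precise sense of the definition (one nonzero component per $(w,\sigma)$, zero at the other embeddings over $w$ lifting $\sigma$), and checking that the output weight $\imath\lambda$ of $\pi$ from Theorem \ref{thm: existence of lifts, pot diag components} is matched up correctly so that Lemma \ref{lem: equivalence of modular of Serre weight and RACSDC lift} applies on the nose. Because $l$ splits completely in $F$, each residue field $k_w=\F_l$ and $\Hom(k_w,\Flbar)$ is a single element, so there is a unique embedding of $F_w$ over $w$ lifting it; this collapses most of the potential difficulty, and the conjugate-self-duality conditions built into both $\Wdiag$ (via Definition \ref{defn: serre weights global} and the global duality hypothesis on $\rbar$) and into $(\Z^n_+)_0$ ensure compatibility between the components at $w$ and $w^c$. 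So in the split-completely setting this obstacle is essentially notational rather than substantive, and the proof is a direct concatenation of Theorem \ref{thm: existence of lifts, pot diag components} and Lemma \ref{lem: equivalence of modular of Serre weight and RACSDC lift}.
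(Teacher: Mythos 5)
Your argument is correct and follows the paper's proof essentially verbatim: lift $a$ to a global $\lambda$ using the potentially diagonalizable crystalline lifts guaranteed by $a\in\Wdiag(\rbar)$, feed them into Theorem \ref{thm: existence of lifts, pot diag components} to produce a RACSDC $\pi$ of weight $\imath\lambda$, level prime to $l$, and split ramification, then conclude by Lemma \ref{lem: equivalence of modular of Serre weight and RACSDC lift}. One small misstatement in your parenthetical: the theorem assumes only that places of $F^+$ above $l$ split in $F$, not that $l$ splits completely in $F$, so $\Hom(F_w,\Qlbar)$ need not be a singleton; however the compatibility you worry about is a non-issue regardless, since one only needs to specify a local lift $\rho_{\tv}$ of Hodge type $\lambda^{(\tv)}$ at one place $\tv$ in each conjugate pair $\{w,w^c\}$, and the $0$-condition in $(\Z^n_+)_0^{\Hom(F,\Qlbar)}$ then determines $\lambda$ over $w^c$ (which remains a lift of $a$ by the conjugate-self-duality built into $a$, while $\rho_{\tv}^\vee\otimes\epsilon_l^{1-n}$ transported by $c$ supplies a matching potentially diagonalizable lift there if one wants it).
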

\begin{proof}By the assumption that $a\in \Wdiag(\rbar)$, there is a
  lift $\lambda$ of $a$ such that for each $w|l$ there is a
  potentially diagonalizable crystalline lift
  $\rho_w:G_{F_w}\to\GL_n(\CO_{\Qlbar})$ of $\rbar|_{G_{F_w}}$ of
  Hodge type $\lambda_w$.  

By Theorem \ref{thm: existence of lifts,
    pot diag components}, there is a RACSDC automorphic representation
  $\pi$ of $\GL_n(\A_F)$ of weight $\imath\lambda$, of level prime to
  $l$ and with split ramification, such that
  $\rbar(\pi)\cong\rbar$. The result follows from Lemma \ref{lem:
    equivalence of modular of Serre weight and RACSDC
    lift}.\end{proof} Since Fontaine--Laffaille representations are
potentially diagonalizable, we obtain the following Corollary.
\begin{cor}
  \label{cor: Fontaine-Laffaille potentially diagonalizable local lifts implies Serre
    weight}Let $F$ be an imaginary CM field with maximal totally real subfield
  $F^+$, and suppose that $F/F^+$ is unramified at all finite places,
  that every place of $F^+$ dividing $l$ splits completely in $F$,
  and that  if $n$ is even then $n[F^+:\Q]/2$ is even. Suppose that $l>2$, and that
  $\rbar:G_F\to\GL_n(\Flbar)$ is an irreducible 
  representation with split ramification. Assume that

\begin{enumerate}
\item $l$ is unramified in $F$.
\item There is a RACSDC automorphic representation $\Pi$ of
  $\GL_n(\A_F)$ of weight $\mu\in(\Z^n_+)_0^{\Hom(F,\C)}$ and level
  prime to $l$ such that
  \begin{itemize}
  \item $\rbar\cong\rbar_{l,\imath}(\Pi)$ (so in particular, $\rbar^c\cong\rbar^\vee\epsilonbar_l^{1-n}$).
  \item For each $\tau\in\Hom(F,\C)$, $\mu_{\tau,1}-\mu_{\tau,n}\le l-n$.
    \item $\rbar(G_{F(\zeta_l)})$ is adequate.
  \end{itemize}
\end{enumerate}
 Let $a\in(\Z^n_+)_0^{\coprod_{w|l}\Hom(k_w,\Flbar)}$ be a
  Serre weight. Assume that $a\in \Wdiag(\rbar)$. 
Then there is a Serre weight
$b\in(\Z^n_+)_0^{\coprod_{w|l}\Hom(k_w,\Flbar)}$ such that
\begin{itemize}
\item $\rbar$ is modular of weight $b$.
\item There is a Jordan-H\"older factor of the $G(\cO_{F^+,l})$
  representation $P_a$ which is isomorphic to $F_b$.

\end{itemize}

\end{cor}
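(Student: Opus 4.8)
The plan is to deduce the statement directly from Theorem~\ref{thm: potentially diagonalizable local lifts implies Serre weight}, whose conclusion is word-for-word the one asserted here. So the task reduces to checking that, under the present hypotheses, the hypotheses of that theorem hold. Most of them are immediate: the running assumptions on $F/F^+$, that $l>2$, that $\rbar$ is irreducible with split ramification, that $\rbar(G_{F(\zeta_l)})$ is adequate, and (via $\rbar\cong\rbar_{l,\imath}(\Pi)$) the duality $\rbar^c\cong\rbar^\vee\epsilonbar_l^{1-n}$ needed for $\Wdiag(\rbar)$ to be defined, are all assumed here too, and $a\in\Wdiag(\rbar)$ is assumed verbatim. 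Thus only two points require an argument: that $\zeta_l\notin F$, and that $r_{l,\imath}(\Pi)|_{G_{F_w}}$ is potentially diagonalizable for every place $w\mid l$ of $F$.

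For the first point I would observe that $l$ is totally ramified in $\Q(\zeta_l)/\Q$, with ramification index $l-1\ge 2$ since $l>2$; as ramification indices multiply in towers, $\zeta_l\in F$ would force $l$ to ramify in $F$, contradicting hypothesis~(1). Hence $\zeta_l\notin F$, and the hypothesis $\zeta_l\notin F$ of Theorem~\ref{thm: potentially diagonalizable local lifts implies Serre weight} is met.

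For the second point, since $\Pi$ has level prime to $l$, Theorem~\ref{thm: existence of Galois reps attached to RACSDC}(2) gives that $r_{l,\imath}(\Pi)|_{G_{F_w}}$ is crystalline, and by Remark~\ref{rem: hodge type of auto Galois rep} it has Hodge type $(\imath^{-1}\mu)_w$; in particular, for each embedding $\tau:F_w\into\Qlbar$, the largest and smallest Hodge--Tate weights of $r_{l,\imath}(\Pi)|_{G_{F_w}}$ with respect to $\tau$ differ by at most $(\mu_{\imath\tau,1}-\mu_{\imath\tau,n})+(n-1)\le l-1$. Since $l$ is unramified in $F$, each $F_w/\Ql$ is unramified, so $r_{l,\imath}(\Pi)|_{G_{F_w}}$ lies in the Fontaine--Laffaille range, and Fontaine--Laffaille representations are potentially diagonalizable (the input advertised just before the statement, obtained by passing to a suitable finite extension and invoking the structure theory of Fontaine--Laffaille modules). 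This verifies the remaining hypothesis of Theorem~\ref{thm: potentially diagonalizable local lifts implies Serre weight}, which then produces the Serre weight $b$ with the two stated properties.

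The substance of the argument is therefore carried entirely by the two cited results; the step I would be most careful about is the implication ``crystalline of unramified base with bounded Hodge--Tate weights $\Rightarrow$ potentially diagonalizable'', and in particular confirming that the numerical bound $\mu_{\tau,1}-\mu_{\tau,n}\le l-n$ really does place the (a priori $l$-term) multisets of Hodge--Tate weights of $r_{l,\imath}(\Pi)|_{G_{F_w}}$ inside the precise Fontaine--Laffaille range used in whichever reference one invokes for that implication. Everything else is routine bookkeeping.
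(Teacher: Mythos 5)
Your proof is correct and matches the paper's: both reduce to Theorem \ref{thm: potentially diagonalizable local lifts implies Serre weight} and then verify that $r_{l,\imath}(\Pi)|_{G_{F_w}}$ is potentially diagonalizable for each $w\mid l$. The delicate point you flag at the end is supplied in the paper by a citation to the main result of \cite{GaoLiu12}, which gives potential diagonalizability for crystalline representations of unramified $l$-adic fields whose Hodge--Tate weights lie in an interval of length at most $l-1$ --- exactly the bound $(\mu_{\imath\tau,1}-\mu_{\imath\tau,n})+(n-1)\le l-1$ you compute --- so that step is not merely plausible but established; your additional check that unramifiedness of $l$ in $F$ forces $\zeta_l\notin F$ (a hypothesis of the Theorem that the Corollary's statement does not repeat) is also correct and is left implicit in the paper.
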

\begin{proof}
  By Theorem \ref{thm: potentially diagonalizable local lifts implies
    Serre weight}, it is enough to check that for each place $w|l$ of
  $F$, $r_{l,\imath}(\Pi)|_{G_{F_w}}$ is potentially diagonalizable.
  This follows from the main result of~\cite{GaoLiu12}.
\end{proof}

As explained above, we now specialise to the case that $l$ splits
completely in $F$. We further assume that $\rbar|_{G_{F_w}}$ is
semisimple for all $w|l$, and specify a set $\Wexplicit(\rbar)$ of
Serre weights. These weights will have the property that if
$a\in\Wexplicit(\rbar)$, and $\lambda$ is the unique lift of $a$ to
$(\Z^n_+)_0^{\Hom(F,\Qlbar)}$, then for each place $w|l$,
$\rbar|_{G_{F_w}}$ has a potentially diagonalizable (indeed
potentially diagonal) crystalline lift of Hodge type $\lambda_w$.

Since the situation is purely local, we change notation and work with
$G_\Ql$. Let $\Q_{l^m}$ denote the unramified extension of $\Ql$ of
degree $m$ inside $\Qlbar$, and let
$\omega_m:G_{\Q_{l^m}}\to\Flbar^\times$ denote a choice of fundamental
character of niveau $m$ (this is given by the action of $G_{\Q_{l^m}}$
on the $(l^m-1)$-st roots of $l$). Given $\lambda\in\Flbar^\times$ and
an $m$-tuple of integers $\underline{c}=(c_0,\dots,c_{m-1})$, we
consider the representation
\[\rhobar_{\lambda,\underline{c}}:=\nr_\lambda\otimes\Ind_{G_{\Q_{l^m}}}^\GQl\omega_m^{-(c_0+lc_1+\dots+l^{m-1}c_{m-1})},\]
where $\nr_\lambda$ is the unramified character taking a geometric
Frobenius to $\lambda$. Given a partition $\underline{n}=n_1+\dots+n_r$, elements
$\underline{\lambda}=(\lambda_1,\dots,\lambda_r)$ of $\Flbar^\times$,
and a tuple $\underline{c}=(\underline{c}_1,\dots,\underline{c_r})$ of tuples
$\underline{c}_i=(c_{i,0},\dots,c_{i,n_i-1})$ of integers, 
 we
define the representation \[\rho_{\underline{n},\underline{\lambda},\underline{c}}:=\oplus_{i=1}^r\rho_{\lambda_i,\underline{c}_i}.\]
Note that we can we can think of $\underline{c}$ as the element $(c_{1,0},c_{1,2},\dots,c_{r,n_r-1})$ of
$\Z^n$, where $n = n_1+\dots+n_r$.

\begin{defn} Let $\rhobar:\GQl\to\GL_n(\Flbar)$ be a semisimple
  representation. Let $\Wexplicit(\rhobar)$ be the set of Serre
  weights $a\in\Z^n_+$ for which there exists a permutation $\sigma\in
  S_n$, a partition $\underline{n}$ of $n$ and $\underline{\lambda}$
  as above such
  that \[\rhobar\cong\rhobar_{\underline{n},\underline{\lambda},(a_{\sigma(1)}+n-\sigma(1),\dots,a_{\sigma(n)}+n-\sigma(n))}.\]
  We let $\Wexplicit_I(\rhobar)$ be the set of Serre weights $a$ for
  which there exist $\sigma$, $\underline{n}$ and
  $\underline{\lambda}$ such
  that \[\rhobar|_{I_\Ql}\cong\rhobar_{\underline{n},\underline{\lambda},(a_{\sigma(1)}+n-\sigma(1),\dots,a_{\sigma(n)}+n-\sigma(n))}|_{I_{\Ql}};\]this
  notion will useful in Section \ref{sec: Fontaine-Laffaille theory}.
\end{defn}
\begin{lemma}\label{lem: local explicit implies diag}
  If $\rhobar:\GQl\to\GL_n(\Flbar)$ is a semisimple representation and
  $a\in\Wexplicit(\rhobar)$, then $\rhobar$ has a potentially
  diagonalizable crystalline lift of Hodge type $a$.
\end{lemma}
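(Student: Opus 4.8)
The plan is to construct the lift $\rho$ explicitly, one summand for each block appearing in the given isomorphism. By definition of $\Wexplicit(\rhobar)$, fix $\sigma\in S_n$, a partition $\underline{n}=n_1+\dots+n_r$ and $\underline{\lambda}=(\lambda_1,\dots,\lambda_r)$ with $\rhobar\cong\rhobar_{\underline{n},\underline{\lambda},\underline{c}}$, where $\underline{c}=(a_{\sigma(1)}+n-\sigma(1),\dots,a_{\sigma(n)}+n-\sigma(n))$ is partitioned as $(\underline{c}_1,\dots,\underline{c}_r)$ with $\underline{c}_i=(c_{i,0},\dots,c_{i,n_i-1})$. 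Since $\rhobar_{\underline{n},\underline{\lambda},\underline{c}}=\bigoplus_{i=1}^r\rhobar_{\lambda_i,\underline{c}_i}$, it suffices to produce, for each $i$, a crystalline lift $\rho_i:\GQl\to\GL_{n_i}(\CO_{\Qlbar})$ of $\rhobar_{\lambda_i,\underline{c}_i}$ which becomes diagonal after restriction to $\Q_{l^{n_i}}$ and whose multiset of Hodge--Tate weights (for the unique embedding $\Ql\into\Qlbar$) is $\{c_{i,0},\dots,c_{i,n_i-1}\}$. Granting this, put $\rho:=\bigoplus_{i=1}^r\rho_i$. Then $\rho$ lifts $\rhobar$; its multiset of Hodge--Tate weights is $\bigsqcup_i\{c_{i,0},\dots,c_{i,n_i-1}\}=\{a_{\sigma(k)}+n-\sigma(k):1\le k\le n\}=\{a_j+n-j:1\le j\le n\}$, since $\sigma$ is a bijection, so $\rho$ has Hodge type $a$; and $\rho|_{G_{\Q_{l^N}}}$ is diagonal for $N=\lcm(n_1,\dots,n_r)$ (restriction of a diagonal representation is diagonal), hence $\rho$ is potentially diagonalizable (in fact potentially diagonal).

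For a single block, write $\rhobar_{\lambda,\underline{c}}$ with $\underline{c}=(c_0,\dots,c_{m-1})$, $m=n_i$. By Lubin--Tate theory there is a crystalline character $\chi:G_{\Q_{l^m}}\to\CO_{\Qlbar}^\times$ whose reduction modulo $l$ is $\omega_m^{-(c_0+lc_1+\dots+l^{m-1}c_{m-1})}$ and whose Hodge--Tate weights satisfy $\HT_{\tau_j}(\chi)=\{c_j\}$, where $\tau_0,\dots,\tau_{m-1}$ enumerate $\Hom(\Q_{l^m},\Qlbar)$ compatibly with the indexing of $\omega_m$ (concretely, up to an unramified twist with trivial reduction one takes $\chi$ to be a suitable product of integral powers of the Lubin--Tate characters attached to the $\tau_j$). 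Let $\widetilde{\nr}_\lambda$ be the unramified character of $\GQl$ sending a geometric Frobenius to a fixed lift of $\lambda$ in $\CO_{\Qlbar}^\times$, and set
\[\rho_{\lambda,\underline{c}}:=\widetilde{\nr}_\lambda\otimes\Ind_{G_{\Q_{l^m}}}^{\GQl}\chi.\]
Because induction from the open finite-index subgroup $G_{\Q_{l^m}}$ commutes with reduction modulo $l$, the reduction of $\rho_{\lambda,\underline{c}}$ is $\nr_\lambda\otimes\Ind_{G_{\Q_{l^m}}}^{\GQl}\omega_m^{-(c_0+\dots+l^{m-1}c_{m-1})}=\rhobar_{\lambda,\underline{c}}$. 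It is crystalline, being an unramified twist of the induction of a crystalline character along an unramified extension; by the standard formula for Hodge--Tate weights of an induced representation, $\HT(\rho_{\lambda,\underline{c}})=\bigsqcup_{j=0}^{m-1}\HT_{\tau_j}(\chi)=\{c_0,\dots,c_{m-1}\}$ (tensoring by the unramified $\widetilde{\nr}_\lambda$ changes nothing); and since $\Q_{l^m}/\Ql$ is Galois, $\Ind_{G_{\Q_{l^m}}}^{\GQl}\chi$ restricted to $G_{\Q_{l^m}}$ is the direct sum of the $m$ $\GQl/G_{\Q_{l^m}}$-conjugates of $\chi$, each a crystalline character valued in $\CO_{\Qlbar}^\times$, while $\widetilde{\nr}_\lambda|_{G_{\Q_{l^m}}}$ is an unramified character, so $\rho_{\lambda,\underline{c}}|_{G_{\Q_{l^m}}}$ is diagonal.

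The one step requiring care is the choice of $\chi$: one must simultaneously arrange that its reduction is exactly the prescribed power $\omega_m^{-(c_0+lc_1+\dots+l^{m-1}c_{m-1})}$ of the niveau-$m$ fundamental character and that $\HT_{\tau_j}(\chi)=\{c_j\}$ for the matching enumeration of embeddings, using that the relevant Lubin--Tate character reduces to $\omega_m$ and that $\omega_m$ precomposed with Frobenius equals $\omega_m^l$. This is a routine but sign-sensitive bookkeeping with fundamental characters and with the Hodge--Tate normalisation fixed in the Notation section (recall $\HT_\tau(\epsilon_l)=\{-1\}$); indeed, the sign in the exponent of $\omega_m$ in the definition of $\rhobar_{\lambda,\underline{c}}$ is precisely what makes the weights come out as the $c_j$ rather than their negatives. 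Everything else in the argument is formal.
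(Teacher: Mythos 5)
Your proposal is correct and follows essentially the same route as the paper: reduce to each block $\rhobar_{\lambda,\underline{c}}$, lift the niveau-$m$ character of $G_{\Q_{l^m}}$ to a crystalline character with the prescribed Hodge--Tate weights, induce, and twist by the obvious unramified lift. The only difference is that the paper dispatches the existence of the crystalline character lift by citing Lemma 6.2 of Gee--Savitt (with a remark about the opposite sign convention), whereas you sketch the underlying Lubin--Tate construction and flag the sign bookkeeping directly; the content is the same.
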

\begin{proof}
  By the definition of ``Hodge type $a$'', it is enough to show that
  each representation $\rhobar_{\lambda,\underline{c}}:\GQl\to\GL_m(\Flbar)$
  defined above has a potentially diagonalizable crystalline lift with
  Hodge--Tate weights $c_0,\dots,c_{m-1}$ (note that the direct sum of
  potentially diagonalizable representations is again potentially
  diagonalizable). It thus suffices to show that the character
  $\omega_m^{-(c_0+lc_1+\dots+lc_{m-1})}$ of $G_{\Q_{l^m}}$ has a
  crystalline lift with Hodge--Tate weights $c_0,\dots,c_{m-1}$
  (because the induction to $\GQl$ of such a lift is certainly
  potentially diagonalizable). This follows at once from Lemma 6.2 of
  \cite{geesavitttotallyramified} (noting that the conventions on the
  sign of Hodge--Tate weights in \cite{geesavitttotallyramified} are
  the opposite of those of this paper).
\end{proof} Again we may globalise this definition in the obvious way.

\begin{defn}\label{defn: serre weights explicit global}
   Continue to assume that
    $l$ splits completely in $F$, and let $\rbar:G_F\to\GL_n(\Flbar)$ be a continuous
  representation with $\rbar^c\cong\rbar^\vee\epsilonbar_l^{1-n}$ and
  such that $\rbar|_{G_{F_w}}$ is semisimple for each $w|l$. Then we let $\Wexplicit(\rbar)$ be the set of Serre weights
  $a\in(\Z^n_+)_0^{\coprod_{w|l}\Hom(k_w,\Flbar)}$ such that for each
  place $w|l$, the corresponding Serre weight
  $a_w\in(\Z^n_+)^{\Hom(k_w,\Flbar)}$ is an element of
  $\Wexplicit(\rbar|_{G_{F_w}})$.
  \end{defn}
  \begin{cor}
    \label{cor: explicit global implies diag} Let $\rbar:G_F\to\GL_n(\Flbar)$
    be a continuous representation satisfying the assumptions of Definition~\ref{defn: serre weights explicit global}.
    Then
    $\Wexplicit(\rbar)\subset\Wdiag(\rbar)$.
  \end{cor}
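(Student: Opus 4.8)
The plan is to unwind the definitions place-by-place and reduce everything to the local Lemma~\ref{lem: local explicit implies diag}. Fix a Serre weight $a\in\Wexplicit(\rbar)$; we must show $a\in\Wdiag(\rbar)$. By Definition~\ref{defn: serre weights global} it suffices to prove that for each place $w|l$ of $F$, the corresponding local Serre weight $a_w\in(\Z^n_+)^{\Hom(k_w,\Flbar)}$ lies in $\Wdiag(\rbar|_{G_{F_w}})$.

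First I would exploit the hypothesis (part of Definition~\ref{defn: serre weights explicit global}) that $l$ splits completely in $F$: for $w|l$ this forces $F_w=\Ql$, hence $k_w=\Fl$, and both $\Hom(k_w,\Flbar)$ and $\Hom(F_w,\Qlbar)$ are singletons. Consequently $(\Z^n_+)^{\Hom(k_w,\Flbar)}$ and $(\Z^n_+)^{\Hom(F_w,\Qlbar)}$ are each canonically $\Z^n_+$, and the unique lift of the Serre weight $a_w\in\Z^n_+$ to $(\Z^n_+)^{\Hom(F_w,\Qlbar)}$ is $a_w$ itself. In particular, ``Hodge type $a_w$'' in the sense of Definition~\ref{defn: Galois representation of Hodge type some weight} is exactly the condition ``Hodge type $\lambda_w$ for some lift $\lambda_w$ of $a_w$'' appearing in the definition of $\Wdiag$.

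Next, by Definition~\ref{defn: serre weights explicit global}, the membership $a\in\Wexplicit(\rbar)$ means precisely that $a_w\in\Wexplicit(\rbar|_{G_{F_w}})$ for every $w|l$; and $\rbar|_{G_{F_w}}$ is semisimple by hypothesis. So Lemma~\ref{lem: local explicit implies diag} applies with $\rhobar=\rbar|_{G_{F_w}}\colon\GQl\to\GL_n(\Flbar)$ and produces a potentially diagonalizable crystalline lift of $\rbar|_{G_{F_w}}$ of Hodge type $a_w$. By the identification of the previous paragraph, such a lift witnesses $a_w\in\Wdiag(\rbar|_{G_{F_w}})$; running over all $w|l$ then gives $a\in\Wdiag(\rbar)$, as required.

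There is essentially no obstacle: the corollary is a formal consequence of the local Lemma~\ref{lem: local explicit implies diag} together with bookkeeping that the complete-splitting hypothesis renders trivial. The only point that deserves a moment's care is matching the notions of ``lift of a Serre weight'' (used in $\Wdiag$) and of ``Hodge type'' over $\Ql$ — and, as noted, this is immediate since there is only a single embedding $\Ql\into\Qlbar$ to track, so no genuine choice of lift is involved.
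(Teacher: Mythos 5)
Your proof is correct and takes essentially the same route as the paper, which simply asserts that the corollary ``follows immediately from Lemma~\ref{lem: local explicit implies diag}.'' You have merely spelled out the routine bookkeeping — that the complete-splitting hypothesis collapses $F_w$ to $\Ql$ so that the unique lift of $a_w$ is $a_w$ itself — which the paper leaves implicit.
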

  \begin{proof}
    This follows immediately from Lemma \ref{lem: local explicit implies diag}.
  \end{proof}
  In the case $n=2$, which we explored more thoroughly in
  \cite{BLGGU2}, $\Wexplicit(\rbar)$ is precisely the set of weights
  for which $\rbar$ is modular. We do not conjecture this for $n>2$;
  even for $n=3$ one sees that the set of weights predicted in
  \cite{herzigthesis} is larger than $\Wexplicit(\rbar)$. In fact, we
  expect (see \cite{GHS} for a much more detailed discussion) that the
  set of weights for which $\rbar$ is modular is $\Wcris(\rbar)$, and
  it is easy to see that this set is typically larger than
  $\Wexplicit(\rbar)$. Indeed, by Lemma \ref{lem: equivalence of
    modular of Serre weight and RACSDC lift} and Theorem \ref{thm:
    existence of Galois reps attached to RACSDC}, if $\rbar$ is
  modular of some Serre weight $b$, and $F_b$ is a Jordan-H\"older
  factor of $P_a$ for some Serre weight $a$, then
  $a\in\Wcris(\rbar)$. It is easy to find examples of $a$, $b$ for
  which $b\in\Wexplicit(\rbar)$ but $a\notin\Wexplicit(\rbar)$. On the
  other hand, as explained in \cite{GHS} we believe that
  $\Wcris(\rbar)$ is determined by $\Wexplicit(\rbar)$ and a simple
  combinatorial recipe, so that the weights in $\Wexplicit(\rbar)$ are
  in some sense fundamental.

\subsection{Fontaine-Laffaille theory}\label{sec: Fontaine-Laffaille theory} In applications of our results
it is often useful to have information in the opposite direction;
namely one wishes to have information about $\rbar|_{G_{F_w}}$ at
places $v|p$, given that $\rbar$ is modular of some particular
weight. In the case that $l$ is unramified in $F$ and the weight is
sufficiently far inside the lowest alcove, this can be done by
Fontaine--Laffaille theory. Again, we specialise to the case that $l$
splits completely in $F$.
\begin{lem}
  \label{lem: Fontaine-Laffaille theory} Let $F$ be an imaginary CM field with maximal totally real subfield
  $F^+$, and suppose that $F/F^+$ is unramified at all finite places,
  and that $l$ splits completely in $F$. If $n$ is even, assume that
  $[F^+:\Q]n/2$ is even. Suppose that $l>2$, and that
  $\rbar:G_F\to\GL_n(\Flbar)$ is an irreducible modular representation with
  split ramification. Let  $a\in(\Z^n_+)_0^{\coprod_{w|l}\Hom(k_v,\Flbar)}$ be a
  Serre weight. If $\rbar$ is modular of weight $a$, and $w|l$ is such
  that $a_{w,1}-a_{w,n}\le l-n$, then $a_w\in\Wexplicit_I(\rbar|_{G_{F_w}}^{\semis})$.
\end{lem}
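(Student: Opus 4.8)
The plan is to use the modularity of $\rbar$ to produce a crystalline Galois representation in the Fontaine--Laffaille range lifting $\rbar|_{G_{F_w}}$, and then to read off $a_w$ from the standard Fontaine--Laffaille description of its reduction modulo $l$.

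Since $l$ splits completely in $F$, the Serre weight $a$ admits a unique lift $\lambda\in(\Z^n_+)_0^{\Hom(F,\Qlbar)}$, and for each $w\mid l$ the component $\lambda_w$ is simply $a_w$, viewed as a dominant character of $\GL_n$ via the unique embedding $F_w=\Ql\hookrightarrow\Qlbar$. Our running hypotheses put us in a position to apply Lemma~\ref{lem: equivalence of modular of Serre weight and RACSDC lift}: from the assumption that $\rbar$ is modular of weight $a$ we obtain a RACSDC automorphic representation $\pi$ of $\GL_n(\A_F)$ of weight $\imath\lambda$, of level prime to $l$, with split ramification, and with $\rbar_{l,\imath}(\pi)\cong\rbar$. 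By Theorem~\ref{thm: existence of Galois reps attached to RACSDC} (using that $\pi$ has level prime to $l$) together with Remark~\ref{rem: hodge type of auto Galois rep}, the representation $r:=r_{l,\imath}(\pi)$ is crystalline at each place $w\mid l$ of Hodge type $\lambda_w$; equivalently, $r|_{G_{F_w}}$ is a crystalline representation of $G_{\Ql}$ whose Hodge--Tate weights are the $n$ distinct integers $h_i:=a_{w,i}+(n-i)$, $1\le i\le n$. Since $h_1-h_n=(a_{w,1}-a_{w,n})+(n-1)\le l-1$ by hypothesis, these weights lie in the Fontaine--Laffaille range. Moreover, reducing a $G_F$-stable lattice in $r$ and passing to semisimplifications, the reduction of $r|_{G_{F_w}}$ has semisimplification $\rbar|_{G_{F_w}}^{\semis}$.

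It remains to invoke the classical Fontaine--Laffaille computation of the semisimplification of the reduction modulo $l$ of a crystalline representation of $G_{\Ql}$ whose Hodge--Tate weights lie in this range (see \cite{GHS}, the appendix to \cite{herzigthesis}, or \cite{egh} for the case $n=3$; note that the sign convention for Hodge--Tate weights in those references is the opposite of ours). That computation shows that $\rbar|_{G_{F_w}}^{\semis}$ is isomorphic --- at least after restriction to $I_{\Ql}$, which is all we need --- to $\rhobar_{\underline n,\underline\lambda,\underline c}$ for some partition $\underline n=n_1+\dots+n_r$ of $n$, some $\underline\lambda\in(\Flbar^\times)^r$, and some tuple $\underline c=(\underline c_1,\dots,\underline c_r)$ obtained by distributing the multiset $\{h_1,\dots,h_n\}$ of Hodge--Tate weights of $r|_{G_{F_w}}$ among the niveau blocks. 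Letting $\sigma\in S_n$ be the permutation for which $\underline c=(h_{\sigma(1)},\dots,h_{\sigma(n)})$ as a flattened tuple, we have $\underline c=(a_{w,\sigma(1)}+n-\sigma(1),\dots,a_{w,\sigma(n)}+n-\sigma(n))$, and therefore $a_w\in\Wexplicit_I(\rbar|_{G_{F_w}}^{\semis})$, as required.

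Apart from this last input, the argument is pure bookkeeping over the results already established. The only points needing care are the verification that the bound $a_{w,1}-a_{w,n}\le l-n$ places the shifted Hodge--Tate weights $h_i$ within the Fontaine--Laffaille range, and the reconciliation of the Hodge--Tate sign conventions of the cited references with the conventions used in defining $\rhobar_{\underline n,\underline\lambda,\underline c}$; there is no substantial obstacle beyond these.
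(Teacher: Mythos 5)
Your proposal is correct and follows essentially the same route as the paper: produce, from modularity of weight $a$, a crystalline lift of $\rbar|_{G_{F_w}}$ whose Hodge--Tate weights $a_{w,i}+n-i$ lie in the Fontaine--Laffaille range, and then read off the tame-inertia characters of the reduction from the standard Fontaine--Laffaille computation. The only cosmetic difference is that the paper packages the first step as Corollary~\ref{cor: modular of some weight implies crystalline lifts exist} and cites Proposition~3 of \cite{MR1931205} for the second, whereas you unroll the former via Lemma~\ref{lem: equivalence of modular of Serre weight and RACSDC lift} and Remark~\ref{rem: hodge type of auto Galois rep} and cite the Fontaine--Laffaille literature more loosely.
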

\begin{proof}
  This is a standard application of Fontaine--Laffaille theory. By
  Corollary \ref{cor: modular of some weight implies crystalline lifts
    exist}, $\rbar|_{G_{F_w}}$ has a crystalline lift with Hodge--Tate
  weights $a_{w,1}+n-1,\dots,a_{w,n}$. Since by assumption we have
  $a_{w,1}+n-1-a_{w,n}\le l-1$, the result follows immediately from,
  for example, Proposition 3 of \cite{MR1931205} (note that while this reference
  assumes that the crystalline representation has $\Ql$-coefficients,
  the proof goes through unchanged with $\Qlbar$-coefficients).
\end{proof}

\section{Explicit results for $\GL_3$}\label{GL3 results with l
  split}\subsection{}We now show how one can obtain cleaner results
in the case $n=3$, making use of the fact that the representation
theory of $\GL_3$, while more complicated than that of $\GL_2$, is
rather simpler than that of $\GL_n$ for $n\ge 4$. The following
Lemmas are key to our approach.

\begin{lem}\label{lem: GL_3 decomposition of mod l representations}
  Let $a\in\Z^3_+$ be a Serre weight for $\GL_3(\F_l)$. Then
  \begin{enumerate}
  \item if $l-1\le a_1-a_3$ and $a_1-a_2$, $a_2-a_3\le l-2$, then there is a
    short exact sequence \[0\to F_a\to P_a\to F_b\to 0\]where $b=(a_3+l-2,a_2,a_1-l+2)$.
  \item In all other cases, $P_a=F_a$.
  \end{enumerate}

\end{lem}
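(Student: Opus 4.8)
This is a concrete statement about the modular representation theory of $\GL_3(\F_l)$, so I would prove it by a direct analysis of the Weyl module $P_a = \Ind_{B_3}^{\GL_3}(w_0a)_{\F_l}$ and its submodule structure. First I would recall that $P_a$ has irreducible socle $F_a$ (generated by the highest weight vector), and that the length and Jordan--Hölder factors of $P_a$ are governed by the alcove in which the weight $a + \rho$ sits, with $\rho = (2,1,0)$ the half-sum of positive roots (equivalently, by the linkage principle for $\GL_3$ in characteristic $l$). The affine Weyl group for $\GL_3$ acting by the dot action is generated by reflections in the walls of the alcoves, and the key combinatorial fact is that a restricted dominant weight lies in the interior of the lowest alcove precisely when $a_1 - a_3 \le l - 2$ (after the $\rho$-shift, $a_1 + 2 - a_3 \le l$), in which case the Weyl module is already irreducible, giving case (2); the remaining restricted weights satisfying $a_1 - a_2, a_2 - a_3 \le l - 1$ lie in the second alcove, where the Weyl module has exactly two composition factors.

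The second step is to identify the top factor $F_b$ explicitly in the second-alcove case. For $\GL_3$, when $a + \rho$ lies in the second (``up'') alcove, the non-socle composition factor of the Weyl module is $F_{s \cdot_l (a+\rho) - \rho}$, where $s$ is the affine reflection in the unique wall separating the lowest alcove from the second one; that wall corresponds to the highest root $\alpha_1 + \alpha_2$ and level $l$. Carrying out this reflection on $a + \rho = (a_1 + 2, a_2 + 1, a_3)$: reflecting in the hyperplane $\langle x, \alpha^\vee \rangle = l$ for $\alpha^\vee$ the coroot of the highest root sends $(x_1, x_2, x_3)$ to $(x_3 + l, x_2, x_1 - l)$. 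Applying this to $a + \rho$ and subtracting $\rho$ back off gives exactly $(a_3 + l - 2, a_2, a_1 - l + 2)$, which is the claimed $b$. I would verify $b$ is again a restricted dominant weight under the stated hypotheses $l - 1 \le a_1 - a_3$ and $a_1 - a_2, a_2 - a_3 \le l - 2$: these give $b_1 - b_2 = a_3 - a_2 + l - 2 \ge 0$ (from $a_1 - a_3 \ge l - 1$ is not quite it — rather from $a_2 - a_3 \le l - 2$) and $b_2 - b_3 = a_2 - a_1 + l - 2 \ge 0$ (from $a_1 - a_2 \le l - 2$), and similarly $b_1 - b_2, b_2 - b_3 \le l - 1$ from $a_1 - a_3 \ge l - 1$; so the short exact sequence $0 \to F_a \to P_a \to F_b \to 0$ makes sense.

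For a cleaner citation-based argument I would instead appeal directly to the structure of $\GL_3$-modules as worked out in the references already invoked in the paper — in particular the appendix to \cite{herzigthesis} (which catalogues the irreducible $\F_l$-representations of $\GL_n(\F_l)$ and the decomposition of the relevant Weyl modules) together with the general facts in II.2.8 and the linkage/translation-principle material of \cite{MR2015057}. The translation principle reduces the general second-alcove case to weights adjacent to the wall, where the two-step filtration and the identity of the top factor are classical. One small point to be careful about: the statement is phrased for $\GL_3(\F_l)$-representations $F_a$, $P_a$ rather than for algebraic $\GL_3$-modules, so I would use that reduction modulo $l$ of the algebraic Weyl module $M'_a \otimes \F_l$ restricted to $\GL_3(\F_l)$ has the same two composition factors when $a$ is restricted and $l$-generic enough — which is exactly the content of the hypotheses, since $a_1 - a_3 \le $ something like $l$ keeps us in the range where algebraic and finite-group composition factors coincide.

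**Main obstacle.** The genuine work is bookkeeping the $\rho$-shifts and the direction of the dot action correctly so that the top factor comes out as the asserted $b = (a_3 + l - 2,\, a_2,\, a_1 - l + 2)$ and not some twist or Weyl-conjugate of it, and matching the paper's normalisations (it uses $\Ind_{B_n}^{\GL_n}(w_0 a)$, i.e. a dual-Weyl/costandard convention, so the socle rather than the head is $F_a$, which flips which factor is the ``sub'' and which is the ``quotient'' relative to some references). Once the conventions are pinned down, everything else is either the standard second-alcove computation or a direct quotation, and the boundary inequalities guaranteeing that $b$ is a Serre weight are a short check.
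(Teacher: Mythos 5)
The paper's own proof is a one-line citation: ``This is Proposition 3.18 of \cite{herzigthesis}.'' Your proposal works the statement out from first principles via the linkage principle and alcove geometry for $\GL_3$, and then separately observes that a citation to the appendix of \cite{herzigthesis} would also do -- so it contains the paper's approach as a fallback, but the bulk of your argument is a genuinely different, self-contained route. The centrepiece of your computation is right: the unique reflecting wall separating the lowest and second restricted alcoves is the affine hyperplane $\langle x, (\alpha_1+\alpha_2)^\vee\rangle = l$, the affine reflection sends $(x_1,x_2,x_3)\mapsto (x_3+l, x_2, x_1-l)$, and applying this to $a+\rho=(a_1+2,a_2+1,a_3)$ and subtracting $\rho$ gives exactly $b=(a_3+l-2,a_2,a_1-l+2)$; your inequality checks that $b$ is again a Serre weight are also correct (modulo the self-correction you already make).

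There is one small gap in the alcove dichotomy as you state it. You say that the weights not handled by the lowest-alcove case (``the remaining restricted weights satisfying $a_1-a_2, a_2-a_3 \le l-1$'') all lie in the second alcove, where the dual Weyl module has length two. That is not quite right: if $a_1-a_2 = l-1$ or $a_2-a_3 = l-1$, then $a+\rho$ lies on an affine wall ($\langle a+\rho,\alpha_i^\vee\rangle = l$), so $a+\rho$ is singular and the dual Weyl module is irreducible, not of length two. These are precisely the boundary cases that the lemma quietly folds into case (2) by the hypothesis $a_1-a_2, a_2-a_3 \le l-2$ in case (1). Your argument needs an explicit extra sentence handling the singular case (via the linkage principle for singular weights, or translation onto the wall from the lowest alcove, both of which you already invoke), so that case (2) genuinely covers all three subcases: $a_1-a_3 \le l-2$ (closure of the lowest alcove), $a_1-a_2 = l-1$, and $a_2-a_3 = l-1$. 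Once that is added, the from-scratch argument is complete and, unlike the paper's citation, makes the origin of the reflected weight $b$ transparent, which is a real pedagogical gain.
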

\begin{proof}
  This is Proposition 3.18 of \cite{herzigthesis}.
\end{proof}

\begin{lem}
  \label{lem: explicit list of weights for n=3, taking account of niveaus}
  Suppose that $n=3$, and that $a\in\Z^3_+$ is a Serre weight for
  $\GL_3(\Fl)$. If $a\in\Wexplicit(\rbar)$ for some representation $\rbar:\GQl\to\GL_3(\Fl)$, then either $a_1-a_3=l-1$
  and \[\rbar|_{I_\Ql}\cong\omega^{-(a_1+1)}\oplus\omega^{-(a_2+1)}\oplus\omega^{-(a_3+1)},\]or
  there is a permutation $x$, $y$, $z$ of $-(a_1+2)$, $-(a_2+1)$, $-a_3$ such
  that $\rbar|_{I_\Ql}$ is isomorphic to one
  of \[\omega^x\oplus\omega^y\oplus\omega^z,\]\[\omega^x\oplus\omega_2^{y+lz}\oplus\omega_2^{ly+z},\]\[\omega_3^{x+ly+l^2z}\oplus\omega_3^{y+lz+l^2x}\oplus\omega_3^{z+lx+l^2y},\]where
  in the second case we have $(l+1)\nmid ly+z$, and in the third case
  we have $(l^2+l+1)\nmid x+ly+l^2z$.
\end{lem}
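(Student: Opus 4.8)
The plan is to unravel the definition of $\Wexplicit(\rbar)$ and then restrict to the inertia subgroup $I_\Ql$, where all the unramified twists $\nr_\lambda$ become trivial, so that the shape of $\rbar|_{I_\Ql}$ is governed purely by the partition of $3$ occurring in the decomposition and by the exponents. First I would record that, since $a_1+2>a_2+1>a_3$, the three integers $a_i+3-i$ are pairwise distinct; hence as $\sigma$ runs over $S_3$ the tuple $(a_{\sigma(1)}+3-\sigma(1),a_{\sigma(2)}+3-\sigma(2),a_{\sigma(3)}+3-\sigma(3))$ runs over all six orderings of $(a_1+2,\,a_2+1,\,a_3)$. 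Thus $a\in\Wexplicit(\rbar)$ says exactly that there is a partition $\underline n$ of $3$, a tuple $\underline\lambda$ of elements of $\Flbar^{\times}$, and an ordering $(c_0,c_1,c_2)$ of $(a_1+2,a_2+1,a_3)$, blocked according to $\underline n$, with $\rbar\cong\rho_{\underline n,\underline\lambda,\underline c}$; there are only three partitions to treat, namely $3=1+1+1$, $3=1+2$ and $3=3$.

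Next I would restrict to $I_\Ql$, using: $\nr_\lambda|_{I_\Ql}=1$; $\omega_1|_{I_\Ql}=\omega$; the fact that $\Q_{l^m}/\Ql$ is unramified, so $I_{\Q_{l^m}}=I_\Ql$; the norm-compatibility $\omega_m^{(l^m-1)/(l-1)}=\omega$; and the Mackey formula $(\Ind_{G_{\Q_{l^m}}}^{\GQl}\omega_m^{j})|_{I_\Ql}\cong\bigoplus_{i=0}^{m-1}\omega_m^{l^i j}$, valid since $I_\Ql$ is normal in $\GQl$ and contained in $G_{\Q_{l^m}}$, with Frobenius acting on $\omega_m$ by $\omega_m\mapsto\omega_m^{l}$, followed by $\omega_m^{l^m-1}=1$ to reduce exponents. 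Writing $x,y,z$ for the negatives of the $c_i$ (with $x$ the singleton block in the $1+2$ case), this yields in the three cases respectively $\rbar|_{I_\Ql}\cong\omega^x\oplus\omega^y\oplus\omega^z$, $\rbar|_{I_\Ql}\cong\omega^x\oplus\omega_2^{y+lz}\oplus\omega_2^{ly+z}$, and $\rbar|_{I_\Ql}\cong\omega_3^{x+ly+l^2z}\oplus\omega_3^{y+lz+l^2x}\oplus\omega_3^{z+lx+l^2y}$, with $\{x,y,z\}$ an ordering of $\{-(a_1+2),-(a_2+1),-a_3\}$ in every case — that is, exactly the first, second and third shapes in the statement.

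It then remains to handle the degenerate sub-cases, which is where the alternative $a_1-a_3=l-1$ and the non-divisibility conditions enter. In the $1+2$ case, $l\equiv-1\pmod{l+1}$ gives $(l+1)\mid(y+lz)\iff(l+1)\mid(y-z)$; but the three pairwise differences among $x,y,z$ are, in absolute value, $a_1-a_2+1$, $a_2-a_3+1$ and $a_1-a_3+2$, which for a Serre weight lie in $[1,l]$, $[1,l]$ and $[2,2l]$ respectively, so divisibility by $l+1$ can only occur when $\{y,z\}=\{-(a_1+2),-a_3\}$ and $a_1-a_3=l-1$. In that case $\omega_2^{y+lz}=\omega_2^{ly+z}$ is a power of $\omega$ (via $\omega_2^{l+1}=\omega$), and a short computation collapses $\rbar|_{I_\Ql}$ to $\omega^{-(a_1+1)}\oplus\omega^{-(a_2+1)}\oplus\omega^{-(a_3+1)}$ — using $-(a_1+1)\equiv-(a_3+1)\pmod{l-1}$ — so we are in the exceptional case; otherwise $(l+1)\nmid(y+lz)$, equivalently $(l+1)\nmid(ly+z)$ since $l$ is a unit mod $l+1$, as asserted. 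Similarly, in the $3$ case, reducing modulo $l^2+l+1$ via $l^2\equiv-(l+1)$ turns $x+ly+l^2z$ into $(x-z)+l(y-z)$, and bounding the pairwise differences as above shows $(l^2+l+1)\mid(x+ly+l^2z)$ forces $a_1-a_3=l-1$ (with $a_1=a_2$ or $a_2=a_3$), in which case the induced representation again degenerates to $\omega^{-(a_1+1)}\oplus\omega^{-(a_2+1)}\oplus\omega^{-(a_3+1)}$; otherwise the stated non-divisibility holds.

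I expect the only genuine work to be in that last step: running through the six orderings $(x,y,z)$ of $(-(a_1+2),-(a_2+1),-a_3)$, determining exactly which of the congruences $(l+1)\mid(y+lz)$ and $(l^2+l+1)\mid(x+ly+l^2z)$ are compatible with the Serre-weight inequalities, and checking in every such degenerate configuration that the (now reducible) induced representation is precisely $\omega^{-(a_1+1)}\oplus\omega^{-(a_2+1)}\oplus\omega^{-(a_3+1)}$. Everything before that — unwinding the definition of $\Wexplicit$ and matching the three generic shapes with the three partitions by restriction to inertia — is formal.
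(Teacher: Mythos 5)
Your proof is correct and takes the same route the paper intends: the paper's own proof is a one-line remark that "the only thing to check is when it can be the case that $ly+z$ is divisible by $l+1$ or $x+ly+l^2z$ is divisible by $l^2+l+1$", and you have simply filled in the Mackey-theory computation of $\rbar|_{I_{\Q_l}}$ for each of the three partitions of $3$ and then worked out the arithmetic of the degenerate divisibility cases. I spot-checked the key steps (that the six values of $\sigma$ sweep out all orderings of $(a_1+2,a_2+1,a_3)$; that $(l+1)\mid(ly+z)$ can only occur for $\{y,z\}=\{-(a_1+2),-a_3\}$ with $a_1-a_3=l-1$; and that in each niveau-$2$ and niveau-$3$ degeneration the exponent collapses to $-(a_i+1)\bmod(l-1)$) and they all go through.
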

\begin{proof}
  This is a simple calculation (it is immediate from the definition
  that $\rbar|_{I_\Ql}$ is of the given form if one ignores the
  divisibility condition, so the only thing to check is when it can be
  the case that $ly+z$ is divisible by $l+1$ or $x+ly+l^2z$ is divisible by $l^2+l+1$).
\end{proof}

\begin{defn}
  Let $a\in\Z^3_+$ be a Serre weight for $\GL_3(\Fl)$. Then we say
  that $a$ is \emph{non-generic} if one of the following three
  conditions hold: $a_1-a_3= l-1$ and $a_1-a_2$,
  $a_2-a_3\le l-2$; or $a_2-a_3=l-2$ and $a_1-a_2\ge 2$; or
  $a_1-a_2=l-2$ and $a_2-a_3 \ge 2$. Otherwise we say that $a$ is
  \emph{generic}.

If $l$ splits completely in $F$ and $a\in(\Z^3_+)_0^{\Hom(F,\Qlbar)}$
is a Serre weight, we say that $a$ is \emph{generic} if for each
$\tau\in\Hom(F,\Qlbar)$ the corresponding Serre weight
$a_\tau\in\Z^3_+$ is generic.
\end{defn}
We remark that this definition of generic is very mild; in
particular, it is much less restrictive than the notion of generic
used in \cite{egh}. (See also Remark~\ref{rem: don't need generic at irred
  places} below.)
\begin{thm}
  \label{thm: explicit result for GL_3 in l split completely case}Let $F$ be an imaginary CM field with maximal totally real subfield
  $F^+$, and suppose that $F/F^+$ is unramified at all finite places,
  and that $l$ splits completely in $F$. Suppose that $l>2$, and that
  $\rbar:G_F\to\GL_3(\Flbar)$ is an irreducible representation with
  split ramification. Assume that

\begin{enumerate}
\item There is a RACSDC automorphic representation $\Pi$ of
  $\GL_3(\A_F)$ of weight $\mu\in(\Z^3_+)_0^{\Hom(F,\C)}$ and level
  prime to $l$ such that
  \begin{itemize}
  \item $\rbar\cong\rbar_{l,\imath}(\Pi)$ (so in particular, $\rbar^c\cong\rbar^\vee\epsilonbar_l^{-2}$).
  \item For each $\tau\in\Hom(F,\C)$, $\mu_{\tau,1}-\mu_{\tau,3}\le l-3$.
     \item $\rbar(G_{F(\zeta_l)})$ is adequate.
  \end{itemize}
\end{enumerate}
 Let $a\in(\Z^3_+)_0^{\coprod_{w|l}\Hom(k_w,\Flbar)}$ be a generic
  Serre weight. Assume that $a\in \Wexplicit(\rbar)$ (so in
  particular, $\rbar|_{G_{F_w}}$ is semisimple for all $w|l$).
Then $\rbar$ is modular of weight $a$.
\end{thm}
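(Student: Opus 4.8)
The plan is to bootstrap from the lifting results of Section~\ref{sec:A lifting theorem}: first use them to show $\rbar$ is modular of \emph{some} Jordan--H\"older factor of $P_a$, and then use Fontaine--Laffaille theory together with the $\GL_3$-specific lemmas above to pin that factor down as $F_a$.

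First I would observe that the hypotheses of Corollary~\ref{cor: Fontaine-Laffaille potentially diagonalizable local lifts implies Serre weight} (with $n=3$) all hold here: $l$ splitting completely in $F$ makes $l$ unramified in $F$ (hence $\zeta_l\notin F$) and makes every place of $F^+$ above $l$ split completely in $F$, and $\mu_{\tau,1}-\mu_{\tau,3}\le l-3=l-n$. Since $a\in\Wexplicit(\rbar)\subseteq\Wdiag(\rbar)$ by Corollary~\ref{cor: explicit global implies diag}, that corollary supplies a Serre weight $b\in(\Z^3_+)_0^{\coprod_{w|l}\Hom(k_w,\Flbar)}$ with $\rbar$ modular of weight $b$ and $F_b$ a Jordan--H\"older factor of the $G(\cO_{F^+,l})$-representation $P_a$. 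As $l$ splits completely, each $k_w=\Fl$ and $P_a=\bigotimes_{\tv\in\tS_l}P_{a_\tv}$ as a representation of $\prod_{\tv\in\tS_l}\GL_3(\Fl)$, so that $F_b=\bigotimes_{\tv\in\tS_l}F_{b_\tv}$ with $F_{b_\tv}$ a Jordan--H\"older factor of $P_{a_\tv}$. Since being modular of weight $a$ depends only on the representation $F_a=\bigotimes_{\tv\in\tS_l}F_{a_\tv}$, it suffices to prove $F_{b_\tv}\cong F_{a_\tv}$ for every $\tv\in\tS_l$.

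Fix $\tv$, write $w=\tv$, and identify $G_{F_w}$ with $\GQl$. By Lemma~\ref{lem: GL_3 decomposition of mod l representations}, either $P_{a_w}=F_{a_w}$ --- in which case $F_{b_w}\cong F_{a_w}$ and we are done at $w$ --- or we are in case~(1): $l-1\le a_{w,1}-a_{w,3}$, $a_{w,1}-a_{w,2},a_{w,2}-a_{w,3}\le l-2$, and $P_{a_w}$ has exactly the Jordan--H\"older factors $F_{a_w}$ and $F_{c_w}$ with $c_w=(a_{w,3}+l-2,\,a_{w,2},\,a_{w,1}-l+2)$. Suppose, for contradiction, that $F_{b_w}\cong F_{c_w}$. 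Genericity of $a_w$, combined with the case~(1) inequalities, excludes $a_{w,1}-a_{w,3}=l-1$, $a_{w,1}-a_{w,2}=l-2$, and $a_{w,2}-a_{w,3}=l-2$; hence $l\le a_{w,1}-a_{w,3}\le 2l-6$, $a_{w,1}-a_{w,2},a_{w,2}-a_{w,3}\le l-3$, and $c_{w,1}-c_{w,3}=2l-4-(a_{w,1}-a_{w,3})\le l-3=l-n$. Then Lemma~\ref{lem: Fontaine-Laffaille theory}, applied to the modular weight $b$ at $w$, gives $b_w\in\Wexplicit_I(\rbar|_{G_{F_w}}^{\semis})$; since $b_w$ is equivalent to $c_w$, $\Wexplicit_I$ depends only on the equivalence class, and $\rbar|_{G_{F_w}}$ is semisimple (part of the hypothesis $a\in\Wexplicit(\rbar)$), we get $c_w\in\Wexplicit_I(\rbar|_{G_{F_w}})$; while $a\in\Wexplicit(\rbar)$ gives $a_w\in\Wexplicit(\rbar|_{G_{F_w}})\subseteq\Wexplicit_I(\rbar|_{G_{F_w}})$.

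The main obstacle is then to convert these two memberships into a contradiction. Running the calculation underlying Lemma~\ref{lem: explicit list of weights for n=3, taking account of niveaus} (whose proof only involves $\rbar|_{I_\Ql}$, hence applies verbatim with $\Wexplicit$ replaced by $\Wexplicit_I$), and using that neither $a_w$ nor $c_w$ lies in the exceptional ``$a_1-a_3=l-1$'' case ($a_{w,1}-a_{w,3}\ge l$ and $c_{w,1}-c_{w,3}\le l-4$), $\rbar|_{I_\Ql}$ must simultaneously be one of the three explicit semisimple forms (of niveau $1$, $2$ or $3$) built from a permutation of $-(a_{w,1}+2),-(a_{w,2}+1),-a_{w,3}$ and one built from a permutation of $-(a_{w,3}+l),-(a_{w,2}+1),-(a_{w,1}-l+2)$. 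One then checks, one niveau pattern at a time, that this is impossible given $l\le a_{w,1}-a_{w,3}\le 2l-6$ and $a_{w,1}-a_{w,2},a_{w,2}-a_{w,3}\le l-3$: for instance the niveau-$1$ pattern forces $\{a_{w,1}+2,a_{w,2}+1,a_{w,3}\}\equiv\{a_{w,1}+1,a_{w,2}+1,a_{w,3}+1\}\pmod{l-1}$ as multisets (using $a_{w,3}+l\equiv a_{w,3}+1$ and $a_{w,1}-l+2\equiv a_{w,1}+1$), which a short analysis of the possible bijections rules out, and the niveau-$2$ and niveau-$3$ patterns are eliminated by the analogous comparisons modulo $l^2-1$ and $l^3-1$. (This is exactly where genericity enters: when $a_{w,1}-a_{w,3}=l-1$ the exceptional inertial form of $a_w$ coincides with the niveau-$1$ form attached to the data of $c_w$, and the two Jordan--H\"older factors cannot be distinguished this way.) The contradiction yields $F_{b_w}\cong F_{a_w}$ for every $\tv=w\in\tS_l$, hence $F_b\cong F_a$, and therefore $\rbar$ is modular of weight $a$.
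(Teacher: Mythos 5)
Your proposal follows essentially the same route as the paper's proof: reduce to the modularity of some Jordan--Hölder factor $F_b$ of $P_a$ via Corollaries~\ref{cor: Fontaine-Laffaille potentially diagonalizable local lifts implies Serre weight} and~\ref{cor: explicit global implies diag}, identify $b_w$ using Lemma~\ref{lem: GL_3 decomposition of mod l representations}, apply Fontaine--Laffaille (Lemma~\ref{lem: Fontaine-Laffaille theory}) to $b$ and compare with the $\Wexplicit_I$-description coming from $a\in\Wexplicit(\rbar)$, and derive a contradiction niveau by niveau. The paper spells out the niveau~$2$ and niveau~$3$ congruence bookkeeping that you defer to ``analogous comparisons modulo $l^2-1$ and $l^3-1$,'' but the structure and key lemmas are identical, and your handling of the equivalence-class subtlety for $b_w$ versus $c_w$ is if anything a bit more careful than the paper's.
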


\begin{rem}\label{rem: don't need generic at irred places}
  In fact, the proof below shows that it suffices to assume that $a_w$ is generic for
  all places $w|l$ for which $\rbar|_{G_{F_w}}$ has niveau $2$, and that if
  $\rbar|_{G_{F_w}}$ has niveau $1$, then we do not have both $a_1-a_3= l-1$ and $a_1-a_2$,
  $a_2-a_3\le l-2$. In particular, if
  $\rbar|_{G_{F_w}}$ is irreducible for all places $w|l$ (which is the situation
  considered in~\cite{egh}), then we do not need to assume that $a$ is generic.
\end{rem}

\begin{proof}[Proof of Theorem~\ref{thm: explicit result for GL_3 in l split
    completely case}] By Corollaries~\ref{cor: Fontaine-Laffaille potentially
    diagonalizable local lifts implies Serre weight} and~\ref{cor: explicit global implies diag}, $\rbar$ is
  modular of weight $b$ for some Serre weight $b$ with the property
  that $F_b$ is a Jordan-H\"older factor of $P_a$. We wish to show
  that $F_b\cong F_a$. Assume for the sake of contradiction that $F_b\not\cong
  F_a$, so that there is a place $w|l$ with $F_{b_w}\not\cong F_{a_w}$. By Lemma
  \ref{lem: GL_3 decomposition of mod l representations}, we must have $l-1\le a_{w,1}-a_{w,3}$ and
  $a_{w,1}-a_{w,2}$, $a_{w,2}-a_{w,3}\le l-2$, and
  $b_w=(a_{w,3}+l-2,a_{w,2},a_{w,1}-l+2)$.

  Since $l-1\le a_{w,1}-a_{w,3}$, we have
  $b_{w,1}-b_{w,3}=2l-4-(a_{w,1}-a_{w,3})\le l-3$. Thus the assumption
  that $\rbar$ is modular of weight $b$, together with Lemma \ref{lem: Fontaine-Laffaille theory} gives an explicit
  description of the possibilities for $\rbar|_{G_{F_w}}$ (which is
  assumed to be semisimple) in terms of $b_w$, and hence in terms of
  $a_w$. We also have another such description from the assumption
  that $a\in\Wexplicit(\rbar)$. We will now compare these descriptions
  to obtain a contradiction.

It will be useful to note that since we are assuming that
$a_{w,1}-a_{w,2}$, $a_{w,2}-a_{w,3}\le l-2$, and $a_{w,1}-a_{w,3}\ge l-1$ we
have
\numequation\label{eq: inequalities 1}
1\le a_{w,1}-a_{w,2}, a_{w,2}-a_{w,3}\le l-2,
\end{equation}
\numequation\label{eq: inequalities 2}
l-1\le
a_{w,1}-a_{w,3}\le 2l-4,
\end{equation} so that
\numequation\label{eq: congruence 1}
a_{w,1}\not\equiv a_{w,2}\pmod{l-1},
\end{equation}
\numequation\label{eq: congruence 2}
a_{w,2}\not\equiv
a_{w,3}\pmod{l-1},
\end{equation}
\numequation\label{eq: congruence 3}
a_{w,3}\not\equiv a_{w,1}+1\pmod{l-1}.
\end{equation}
\numequation\label{eq: congruence 4}
a_{w,1}-a_{w,3}\not\equiv l-2\pmod{l+1}.
\end{equation}
If $a_{w,1}-a_{w,2}=1$ then the
condition that $a_{w,1}-a_{w,3}\ge l-1$ forces $a_{w,2}-a_{w,3}=l-2$, so that
$a_w$ is not generic. Similarly if $a_{w,2}-a_{w,3}=1$ then $a_w$ is not
generic. Therefore if we assume that $a_{w}$ is generic, we also have \numequation\label{eq: congruence 5}
a_{w,1}\not\equiv a_{w,2}+1\pmod{l-1},
\end{equation}
\numequation\label{eq: congruence 6}
a_{w,2}\not\equiv
a_{w,3}+1\pmod{l-1}.
\end{equation}By the second and third conditions in the definition of genericity, we also
have 
\numequation\label{eq: congruence 7}
a_{w,3}\not\equiv
a_{w,2}+1\pmod{l-1},
\end{equation}
\numequation\label{eq: congruence 8}
a_{w,2}\not\equiv
a_{w,1}+1\pmod{l-1}.
\end{equation}
{\sl Niveau 1 }Suppose firstly that $\rbar|_{G_{F_w}}$ has niveau 1,
i.e.\ that $\rbar|_{I_{F_w}}$ is a direct sum of powers of the mod $l$
cyclotomic character $\omega$. Then since $a\in\Wexplicit(\rbar)$ and
$a$ is generic, we
see from Lemma \ref{lem: explicit list of weights for n=3, taking
  account of niveaus} that \[\rbar|_{I_{F_w}}\cong  \omega^{-(a_{w,1}+2)}\oplus\omega^{-(a_{w,2}+1)}\oplus\omega^{-a_{w,3}}
.\] By Lemma \ref{lem: Fontaine-Laffaille theory}  (applied to $F_b$), we see that we also have  \[\rbar|_{I_{F_w}}\cong  \omega^{-(a_{w,3}+1)}\oplus\omega^{-(a_{w,2}+1)}\oplus\omega^{-(a_{w,1}+1)}
.\]Thus $a_{w,3}\equiv a_{w,1}+1\pmod{l-1}$, contradicting~(\ref{eq: congruence 3}).

{\sl Niveau 2 }Suppose next that $\rbar|_{G_{F_w}}$ has niveau 2,
i.e.\ that $\rbar|_{I_{F_w}}$ is a direct sum of a power of the mod $l$
cyclotomic character $\omega$ and characters $\omega_2^n$, $\omega_2^{ln}$
for some $n$ with $(l+1)\nmid n$, where $\omega_2$ is a choice of fundamental character of
niveau 2. Then since $a\in\Wexplicit(\rbar)$, we
see from Lemma \ref{lem: explicit list of weights for n=3, taking account of niveaus} that $\rbar|_{I_{F_w}}$ is isomorphic to one of the following:\[
  \omega^{-(a_{w,1}+2)}\oplus\omega_2^{-(a_{w,2}+1+la_{w,3})}\oplus\omega_2^{-(l(a_{w,2}+1)+a_{w,3})}
\]\[
  \omega^{-(a_{w,2}+1)}\oplus\omega_2^{-(a_{w,1}+2+la_{w,3})}\oplus\omega_2^{-(l(a_{w,1}+2)+a_{w,3})}
\]\[
  \omega^{-a_{w,3}}\oplus\omega_2^{-(a_{w,1}+2+l(a_{w,2}+1))}\oplus\omega_2^{-(l(a_{w,1}+2)+a_{w,2}+1)}
\] By Lemma \ref{lem: Fontaine-Laffaille theory} (applied to $F_b$), we see that we also have that $\rbar|_{I_{F_w}}$ is isomorphic to one of the following:\[
  \omega^{-(a_{w,1}+1)}\oplus\omega_2^{-(a_{w,2}+1+l(a_{w,3}+l))}\oplus\omega_2^{-(l(a_{w,2}+1)+a_{w,3}+l)}
\]\[
  \omega^{-(a_{w,2}+1)}\oplus\omega_2^{-(a_{w,1}-l+2+l(a_{w,3}+l))}\oplus\omega_2^{-(l(a_{w,1}-l+2)+(a_{w,3}+l))}
\]\[
  \omega^{-(a_{w,3}+1)}\oplus\omega_2^{-(a_{w,1}-l+2+l(a_{w,2}+1))}\oplus\omega_2^{-(l(a_{w,1}-l+2)+a_{w,2}+1)}
\]Comparing the powers of $\omega$ and using~(\ref{eq: congruence 1})--(\ref{eq: congruence 8}), the only possibility is that we 
simultaneously have
 \begin{align*}\rbar|_{G_{F_w}}&\cong
  \omega^{-(a_{w,2}+1)}\oplus\omega_2^{-(a_{w,1}+2+la_{w,3})}\oplus\omega_2^{-(l(a_{w,1}+2)+a_{w,3})},\\
\rbar|_{G_{F_w}}&\cong
  \omega^{-(a_{w,2}+1)}\oplus\omega_2^{-(a_{w,1}-l+2+l(a_{w,3}+l))}\oplus\omega_2^{-(l(a_{w,1}-l+2)+(a_{w,3}+l))}
.\end{align*}
There are now two possibilities to examine. Firstly it
could be the case that \[a_{w,1}+2+la_{w,3}\equiv a_{w,1}-l+2+l(a_{w,3}+l)\pmod{l^2-1};\]but
this implies that $l^2-l\equiv 0\pmod{l^2-1}$, a contradiction. So we
must have  \[a_{w,1}+2+la_{w,3}\equiv l(a_{w,1}-l+2)+(a_{w,3}+l)\pmod{l^2-1}.\]This simplifies
to $a_{w,1}-a_{w,3}\equiv l-2\pmod{l+1}$, contradicting~(\ref{eq: congruence 4}).
  
{\sl Niveau 3 }Suppose finally that $\rbar|_{G_{F_w}}$ has niveau 3,
i.e.\ that $\rbar|_{I_{F_w}}$ is of the form $\omega_3^n\oplus\omega_3^{ln}\oplus\omega_3^{l^2n}$
for some $n$ with $(l^2+l+1)\nmid n$, where $\omega_3$ is a choice of fundamental character of
niveau 3. Then since $a\in\Wexplicit(\rbar)$, we
see that $\rbar|_{I_{F_w}}$ is isomorphic to one of the following:\[
  \omega_3^{-(a_{w,1}+2+l(a_{w,2}+1)+l^2a_{w,3})}\oplus\omega_3^{-(a_{w,2}+1+la_{w,3}+l^2(a_{w,1}+2))}\oplus\omega_3^{-(a_{w,3}+l(a_{w,1}+2)+l^2(a_{w,2}+1))}
\] \[
  \omega_3^{-(a_{w,1}+2+la_{w,3}+l^2(a_{w,2}+1))}\oplus\omega_3^{-(a_{w,3}+l(a_{w,2}+1)+l^2(a_{w,1}+2))}\oplus\omega_3^{-(a_{w,2}+1+l(a_{w,1}+2)+l^2a_{w,3})}
\]On the other hand, by Lemma \ref{lem: Fontaine-Laffaille theory}
(applied to $F_b$) we also have that $\rbar|_{I_{F_w}}$ is isomorphic to one of the following:\[
  \omega_3^{-(a_{w,1}-l+2+l(a_{w,2}+1)+l^2(a_{w,3}+l))}\oplus\omega_3^{-(a_{w,2}+1+l(a_{w,3}+l)+l^2(a_{w,1}-l+2))}\oplus\omega_3^{-(a_{w,3}+l+l(a_{w,1}-l+2)+l^2(a_{w,2}+1))}
\] \[
  \omega_3^{-(a_{w,1}-l+2+l(a_{w,3}+l)+l^2(a_{w,2}+1))}\oplus\omega_3^{-(a_{w,3}+l+l(a_{w,2}+1)+l^2(a_{w,1}-l+2))}\oplus\omega_3^{-(a_{w,2}+1+l(a_{w,1}-l+2)+l^2(a_{w,3}+l))}
\]Examining the exponents in these expressions, we obtain
$12$ possible congruences $\pmod{l^3-1}$, each of which we will now
show yields a contradiction. In each case below we derive a congruence modulo
$l^2+l+1$ or $l^3-1$, and it is easy to see in each case that the
inequalities~(\ref{eq: inequalities 1}) and~(\ref{eq: inequalities 2}) imply
that the congruence has no solutions.

\begin{enumerate}
\item $a_{w,1}+2+l(a_{w,2}+1)+l^2a_{w,3}\equiv a_{w,1}-l+2+l(a_{w,2}+1)+l^2(a_{w,3}+l)\pmod{l^3-1}$. This
  simplifies to $l^2-1\equiv 0\pmod{l^3-1}$, a contradiction.
\item $a_{w,1}+2+l(a_{w,2}+1)+l^2a_{w,3}\equiv a_{w,1}-l+2+l(a_{w,3}+l)+l^2(a_{w,2}+1)\pmod{l^3-1}$. This
  simplifies to $a_{w,2}-a_{w,3}+2\equiv 0\pmod{l^2+l+1}$, a contradiction.
\item $a_{w,1}+2+l(a_{w,2}+1)+l^2a_{w,3}\equiv a_{w,2}+1+l(a_{w,1}-l+2)+l^2(a_{w,3}+l)\pmod{l^3-1}$. This
  simplifies to $a_{w,1}-a_{w,2}\equiv l\pmod{l^2+l+1}$, a contradiction.
\item $a_{w,1}+2+l(a_{w,2}+1)+l^2a_{w,3}\equiv a_{w,2}+1+l(a_{w,3}+l)+l^2(a_{w,1}-l+2)\pmod{l^3-1}$. This
  simplifies to $l(a_{w,1}-a_{w,3}+3)+(a_{w,1}-a_{w,2}+2)\equiv 0\pmod{l^2+l+1}$, which is
  easily seen to be impossible.
\item $a_{w,1}+2+l(a_{w,2}+1)+l^2a_{w,3}\equiv a_{w,3}+l+l(a_{w,1}-l+2)+l^2(a_{w,2}+1)\pmod{l^3-1}$. This
  simplifies to $(a_{w,1}-a_{w,3})+l(a_{w,2}-a_{w,3})+2\equiv 0\pmod{l^2+l+1}$, which is also
  impossible.
\item $a_{w,1}+2+l(a_{w,2}+1)+l^2a_{w,3}\equiv a_{w,3}+l+l(a_{w,2}+1)+l^2(a_{w,1}-l+2)\pmod{l^3-1}$. This
  simplifies to $(l+1)(a_{w,1}-a_{w,3}+2)+1\equiv 0\pmod{l^2+l+1}$, which is
  impossible.
\item $a_{w,1}+2+la_{w,3}+l^2(a_{w,2}+1)\equiv a_{w,1}-l+2+l(a_{w,2}+1)+l^2(a_{w,3}+l)\pmod{l^3-1}$. This
  simplifies to $l(a_{w,2}-a_{w,3}+1)+1\equiv 0\pmod{l^2+l+1}$, a contradiction.
\item $a_{w,1}+2+la_{w,3}+l^2(a_{w,2}+1)\equiv a_{w,1}-l+2+l(a_{w,3}+l)+l^2(a_{w,2}+1)\pmod{l^3-1}$. This
  simplifies to $l^2-l\equiv 0\pmod{l^3-1}$, a contradiction.
\item $a_{w,1}+2+la_{w,3}+l^2(a_{w,2}+1)\equiv a_{w,2}+1+l(a_{w,1}-l+2)+l^2(a_{w,3}+l)\pmod{l^3-1}$. This
  simplifies to $l(a_{w,2}-a_{w,3}+2)\equiv a_{w,1}-a_{w,2}\pmod{l^2+l+1}$, which is easily
  seen to be impossible.
\item $a_{w,1}+2+la_{w,3}+l^2(a_{w,2}+1)\equiv a_{w,2}+1+l(a_{w,3}+l)+l^2(a_{w,1}-l+2)\pmod{l^3-1}$. This
  simplifies to $a_{w,1}-a_{w,2}+2\equiv 0\pmod{l^2+l+1}$, a contradiction.
\item $a_{w,1}+2+la_{w,3}+l^2(a_{w,2}+1)\equiv a_{w,3}+l+l(a_{w,1}-l+2)+l^2(a_{w,2}+1)\pmod{l^3-1}$. This
  simplifies to $a_{w,1}-a_{w,3}\equiv l-2\pmod{l^2+l+1}$, a contradiction.
\item $a_{w,1}+2+la_{w,3}+l^2(a_{w,2}+1)\equiv a_{w,3}+l+l(a_{w,2}+1)+l^2(a_{w,1}-l+2)\pmod{l^3-1}$. This
  simplifies to $l(a_{w,1}-a_{w,2}+1)+a_{w,1}-a_{w,3}+3\equiv 0\pmod{l^2+l+1}$, which is impossible.
\end{enumerate}
As we have obtained a contradiction in every case, we see that
$F_b\cong F_a$, as required.
\end{proof}

\bibliographystyle{amsalpha}
\bibliography{barnetlambgeegeraghty}

\end{document}